\documentclass[preprint,12pt,numbers]{elsarticle}

\usepackage{amsmath,amsfonts,amssymb,amsthm}
\usepackage{mathtools}
\usepackage{graphicx}
\usepackage{subcaption}
\usepackage{booktabs}
\usepackage{multirow}
\usepackage[ruled,vlined]{algorithm2e}
\usepackage{wrapfig}
\usepackage[final]{microtype}
\usepackage{enumitem}
\usepackage{comment}
\usepackage{hyperref}
\hypersetup{hypertexnames=false}
\usepackage{xcolor}
\usepackage{placeins}

\allowdisplaybreaks
\theoremstyle{plain}
\theoremstyle{definition}
\newtheorem{definition}{Definition}[section]

\newtheorem{lemma}{Lemma}[section]
\newtheorem{theorem}{Theorem}[section]

\newtheorem{corollary}{Corollary}[section]
\theoremstyle{remark}
\newtheorem{remark}{Remark}[section]

\journal{Computers and Mathematics with Applications}

\begin{document}
	
	\begin{frontmatter}
		
		
		\title{SHANG++: Robust Stochastic Acceleration under Multiplicative Noise}
		
		\author[aff1]{Yaxin Yu}
		\ead{yaxin_yu@stu.scu.edu.cn}
		
		\author[aff2]{Long Chen\corref{cor1}}
		\ead{lchen7@uci.edu}
		
		\author[aff1]{Minfu Feng}
		\ead{fmf@scu.edu.cn}
		
		\affiliation[aff1]{organization={School of Mathematics, Sichuan University},
			city={Chengdu},
			state={Sichuan},
			postcode={610065},
			country={China}}
		
		\affiliation[aff2]{organization={Department of Mathematics, University of California, Irvine},
			city={Irvine},
			state={CA},
			postcode={92697},
			country={USA}}
		
		\cortext[cor1]{Corresponding author.}
		
		\begin{abstract}
			Under the multiplicative noise scaling (MNS) condition, original Nesterov acceleration is provably sensitive to noise and may diverge when gradient noise overwhelms the signal. In this paper, we develop two accelerated stochastic gradient descent methods by discretizing the Hessian-driven Nesterov accelerated gradient flow. We first derive SHANG, a direct semi-implicit discretization that already improves stability under MNS. We then introduce SHANG++, which adds a damping correction and achieves faster convergence with greater noise robustness. We establish convergence guarantees for both convex and strongly convex objectives under MNS, together with explicit parameter choices. In our experiments, SHANG++ performs consistently well across convex problems and applications in deep learning. In a dedicated noise experiment on ResNet-34, a single hyperparameter configuration maintains accuracy within one percentage point of the noise-free setting. Across all experiments, SHANG++ outperforms existing accelerated methods in robustness and efficiency, with minimal parameter sensitivity.
		\end{abstract}
		
		
		
		\begin{keyword}
			Stochastic gradient descent \sep Momentum method  \sep Acceleration \sep Multiplicative noise scaling \sep Convergence analysis
			
			
			
		\end{keyword}
		
	\end{frontmatter}
	
		
		
		\section{Introduction}
		Empirical Risk Minimization (ERM) is central to modern large-scale machine learning, including deep neural networks and reinforcement learning~\cite{HastieTibshiraniFriedman2009}. Given a large dataset $\{(X_i,Y_i)\}_{i=1}^{N}$, where $Y_i$ denotes the label of data $X_i$ and $N \gg 1$, the training objective is
		\begin{equation}\label{problem}
			\min_{x} f(x), \qquad 
			f(x)=\frac{1}{N}\sum_{i=1}^{N} f_i(x),
		\end{equation}
		where $x$ denotes the network parameters and $f_i(x)$ is the loss associated with sample $(X_i,Y_i)$. We use $x$ instead of $\theta$ to keep the notation consistent with the optimization formulation. Efficiently computing a minimizer $x^{\star} = \arg\min_{x} f(x)$ is critical for training neural networks on large datasets.
		
		Exact gradient evaluation is expensive when $N$ is large. Stochastic gradient descent (SGD) uses an unbiased stochastic gradient estimator. At iteration $k$, we draw a mini-batch $B_k\subset\{1,\dots,N\}$ of size $M$ uniformly at random, with or without replacement, independently of the past, and form
			\begin{equation}\label{equsg}
				g(x_k) = \frac{1}{M}\sum_{i\in B_k} \nabla f_i(x_k).
			\end{equation}
			Let \(\{\mathcal F_k\}_{k\ge0}\) be the natural filtration representing the information available before sampling the mini-batch at \(x_k\). Then \(x_k\) is \(\mathcal F_k\)-measurable, while \(B_k\) is sampled independently of \(\mathcal F_k\). Hence the mini-batch estimator in~\eqref{equsg} is conditionally unbiased:
			\begin{equation}\label{eq:unbiase}
				\mathbb{E}\left[g(x_k)\mid \mathcal{F}_k\right]=\nabla f(x_k).
			\end{equation}
		
		The estimator $g(x_k)$ reduces the cost of computing $\nabla f(x_k)$ but introduces noise. In regimes such as small-batch training or over-parameterized models, the variance can scale with, and may dominate, the signal $\|\nabla f(x_k)\|^2$. This effect is modeled by the multiplicative-noise scaling (MNS) condition~\cite{WuDuWard2019,WuWangSu2022a,GuptaSiegelWojtowytsch2024}. The work in \cite{Hodgkinson2021} shows that multiplicative noise can change the geometry of the loss landscape, beyond the smoothing effects of additive noise.
		
		\begin{definition}[Multiplicative Noise Scaling (MNS)]\label{MNS}
			The stochastic gradient estimator satisfies the MNS condition if there exists $\sigma \ge 0$ such that
			\begin{equation}\label{eq:mns}
				\mathbb{E}\left[\| g(x_k) - \nabla f(x_k) \|^2 \mid \mathcal{F}_k \right] \le \sigma^2 \| \nabla f(x_k)\|^2, \quad \forall k\geq 0.
			\end{equation}
		\end{definition}
		The MNS condition is a noise model, not a consequence of unbiasedness. It adds a relative-noise structure to the stochastic gradient estimator.
		
		SGD slows down when the condition number of $f$ is large. Momentum methods, such as Heavy-Ball (HB)~\cite{Polyak1964} and Nesterov accelerated gradient (NAG)~\cite{Nesterov1983}, are widely used to speed up convergence. Adam (Adaptive Moment Estimation)~\cite{KingmaBa2015} is widely used in training deep neural networks. It combines momentum with adaptive step sizes to improve stability and convergence. In this work, we focus on acceleration rather than adaptivity.
		
		Accelerated variants of SGD have been extensively studied. However, momentum methods are highly sensitive to stochastic noise~\cite{DevolderGlineurNesterov2014,AujolDossal2015,LiuEtAl2018}, and stability depends critically on parameter choices~\cite{KidambiEtAl2018,LiuBelkin2020, AssranRabbat2020,GaneshEtAl2023}. Gupta et al.~\cite{GuptaSiegelWojtowytsch2024} showed that, under MNS with $\sigma \ge 1$, NAG fails to converge even for convex and strongly convex objectives. In practice, the benefits of momentum often depend on large mini-batches, which reduce variance and make the stochastic dynamics closer to the deterministic one.
		
		Several corrections have been developed to address this instability. Following \cite{jain2018}, many accelerated stochastic algorithms have been proposed~\cite{LiuBelkin2020,VaswaniBachSchmidt2019,even2021,Bollapragada2022,laborde2020,GuptaSiegelWojtowytsch2024,HermantEtAl2025}. Their goal is to retain acceleration while improving robustness to noise. Vaswani et al.~\cite{VaswaniBachSchmidt2019} introduced a four-parameter NAG variant with optimal accelerated rates. Liu and Belkin~\cite{LiuBelkin2020} proposed the Mass method with a three-parameter correction, though acceleration was proved only for over-parameterized linear models. Gupta et al.~\cite{GuptaSiegelWojtowytsch2024} developed AGNES with guarantees matching~\cite{VaswaniBachSchmidt2019}. Hermant et al.~\cite{HermantEtAl2025} analyzed SNAG, a four-parameter variant in Nesterov's framework~\cite{Nesterov2013}, and proved similar rates under mild tuning.
		
		These accelerated SGDs are theoretically strong for convex optimization but not robust to noise in deep learning tasks. In the reported tests, AGNES and SNAG can lose acceleration under high noise and may perform worse than SGD with recommended hyperparameters. For example, on CIFAR-100 with ResNet-50 and batch size $50$, SGD attains $58.31\%$ test accuracy whereas AGNES reaches $42.82\%$; see Section~\ref{test}. With smaller batches, both AGNES and SNAG show strong oscillations and require additional tuning. These observations motivate a stable, easy-to-tune method.
		
		Our goal is to develop a complementary stochastic acceleration framework that (i) retains accelerated convergence guarantees for convex problems, (ii) gives explicit noise-dependent parameter choices, and (iii) improves empirical stability with modest tuning. Our contributions are as follows.
		
		\begin{enumerate}
			\item We begin with SHANG, a stochastic extension of the HNAG method~\cite{ChenLuo2021}. Compared with classical NAG, the continuous-time dynamic underlying HNAG contains the Hessian-driven term $\nabla^{2}f(x)x'$, which captures curvature-dependent damping and gives a more refined continuous-time model of NAG. SHANG inherits this structure and leads to a Lyapunov analysis under MNS.
			
			\item We refine SHANG into SHANG++ by adding a correction term $-m(x_{k+1}-x_k)$ to the $x$-update. This term relaxes the coupled stepsize scaling in SHANG and offsets the MNS-induced changes of the effective constants $\mu$ and $L$. As a result, SHANG++ allows more flexible parameter choices and gives a sharper convergence factor under strong convexity.
			
			\item We prove convergence estimates for SHANG and SHANG++ for convex and strongly convex objectives under unbiasedness and MNS conditions. The rates show how $\sigma$ changes the admissible stepsize and the acceleration factor. The analysis is for the two-gradient recursions in Section~\ref{methods}. In deep-learning experiments, we use a one-gradient, index-shifted implementation with the same gradient cost as standard stochastic momentum methods; its use for nonconvex training is empirical.
			
			\item We evaluate SHANG and SHANG++ on convex optimization, image classification, and image reconstruction tasks, including MNIST, CIFAR-10, and CIFAR-100. In the reported experiments, SHANG++ is more stable than NAG, AGNES, and SNAG under high multiplicative noise, and is competitive with Adam.
			
			\item We further examine robustness to multiplicative noise. For the tested noise levels $\sigma \le 0.5$, SHANG++ stays close to its noise-free accuracy, while maintaining accuracy within one percentage point of the noise-free setting in the main ResNet-34 experiment. These results support the practical robustness of the method, although the current theory is restricted to convex objectives.
			
		\end{enumerate}
		
		The acronym SHANG stands for Stochastic Hessian-driven Accelerated Nesterov Gradient. We choose it as a pronounceable name with a positive connotation; in Chinese pinyin, \emph{shang} means upward or top.
		
		\paragraph{Notation}
		Throughout this paper, we denote by $\langle \cdot, \cdot \rangle$ the standard Euclidean inner product in $\mathbb{R}^d$, and by $\| \cdot \|$ its induced norm. Let $x^{\star}=\arg\min_{x}f(x)$ denote a global minimizer of $f$.
		
		Let $f: \mathbb{R}^d \to \mathbb{R}$ be differentiable. The Bregman divergence of $f$ between $x, y \in \mathbb{R}^d$ is 
		\begin{equation}\label{bregman}
			D_f(y, x) := f(y) - f(x) - \langle \nabla f(x), y - x \rangle.
		\end{equation}
		The function $f$ is convex if $D_f(y,x)\ge 0$ for all $x,y\in\mathbb{R}^d$. Moreover, $f$ is $\mu$-strongly convex with parameter $\mu\ge 0$ if $D_f(y,x)\ge \frac{\mu}{2}\|y-x\|^2$.
		Function $f$ is $L$-smooth, for some $L > 0$, if its gradient is $L$-Lipschitz: 
		\begin{equation}\label{smooth}
			\|\nabla f(y) - \nabla f(x)\| \leq L \|y - x\|, \quad \forall x, y \in \mathbb{R}^d.
		\end{equation}
		
		Let $\mathcal{S}_{\mu, L}$ be the class of all differentiable functions that are both $\mu$-strongly convex and $L$-smooth on $\mathbb{R}^d$. For $f \in \mathcal{S}_{\mu, L}$, the Bregman divergence satisfies
		\begin{equation}\label{bregman2}
			\frac{\mu}{2} \| x - y \|^2 \leq D_f(x, y) \leq \frac{L}{2} \| x - y \|^2, \quad \forall x, y \in \mathbb{R}^d.
		\end{equation}
		Bregman divergence here is used as an analytical tool in the Lyapunov analysis. Parameters $\mu$ and $L$ are treated as known hyperparameters for the given problem. Their adaptivity is beyond the scope of this work.
		
		The remainder of the paper is organized as follows. We present SHANG and SHANG++ in Section~\ref{methods}. Experimental results are reported in Section~\ref{test}. The complete convergence analysis is given in Section~\ref{analysis}, and Section~\ref{conclusion} concludes the paper.
		
		\section{Stochastic Hessian-driven Accelerated Nesterov Gradient}\label{methods}
		
		\subsection{Hessian-driven Nesterov Accelerated Gradient Flow}
		To accelerate gradient descent, Polyak introduced momentum, modeled by the Heavy-Ball (HB) flow~\cite{Polyak1964}:
		\begin{equation}\label{eq:HB}
			x^{\prime \prime} + \theta x^{\prime} + \eta \nabla f(x) = 0.
		\end{equation}
		However, the discrete HB method $x_{k+1} = x_k - \gamma \nabla f(x_k) + \beta (x_k - x_{k-1})$ may not converge; see \cite{LessardRechtPackard2016,GoujauTaylorDieule2025Provable} for examples.
		
		We build on the second-order dynamical system introduced in \cite{ChenLuo2019optimization,ChenLuo2021}, known as the Hessian-driven Nesterov Accelerated Gradient (HNAG) flow:
		\begin{equation}\label{equ1}
			\gamma x'' + (\gamma + \mu)x' + \beta \gamma \nabla^2 f(x) x' + (1+\mu \beta) \nabla f(x) = 0,
		\end{equation}
		where $\beta>0$ is a parameter and $\gamma$ is a time-scaling function. Compared with the classical HB flow~\eqref{eq:HB}, the additional Hessian-driven term $\nabla^2 f(x)x'$ captures how the local geometry of $f$ affects the damping strength of the dynamics. As shown in \cite{ChenLuo2019optimization}, this curvature-aware mechanism gives a more accurate continuous-time description of NAG.
		
		The second-order ODE~\eqref{equ1} can be written as the first-order system
		\begin{equation}\label{equ2}
			\begin{aligned}
				x' &= v - x - \beta \nabla f(x), \quad &
				v' &= \frac{\mu}{\gamma}(x - v) - \frac{1}{\gamma} \nabla f(x), \quad &
				\gamma' &= \mu - \gamma,
			\end{aligned}
		\end{equation}
		which removes the explicit dependence on $\nabla^2 f(x)$.
		
		\subsection{Stochastic Hessian-driven Accelerated Nesterov Gradient method}\label{sec:shang}
		Applying a semi-implicit discretization to the first-order HNAG system~\eqref{equ2} and replacing deterministic gradients by unbiased stochastic estimates, we obtain the Stochastic Hessian-driven Accelerated Nesterov Gradient method, abbreviated as SHANG:
		\begin{subequations}\label{SHANG}
			\begin{align}
				\frac{x_{k+1}-x_k}{\alpha_k}
				&= v_k - x_{k+1} - \beta_k g(x_k) \label{SHANG:a}\\
				\frac{v_{k+1}-v_k}{\alpha_k}
				&= \frac{\mu}{\gamma_k}(x_{k+1}-v_{k+1}) - \frac{1}{\gamma_k}g(x_{k+1}) \label{SHANG:b}\\
				\frac{\gamma_{k+1}-\gamma_k}{\alpha_k}
				&\le \mu - \gamma_{k+1} \label{SHANG:c}
			\end{align}
		\end{subequations}
		with given $x_0$, $v_0$, and $\gamma_0$. For the time-scaling variable, we use the one-sided schedule condition~\eqref{SHANG:c}, which allows more choices of $\{\gamma_k\}$.
		
		In the strongly convex case, we fix $\gamma_k\equiv \mu$ and take a constant step size $\alpha_k\equiv \alpha$. In the convex case, we set $\mu=0$ and allow both $\alpha_k$ and $\gamma_k$ to vary; then \eqref{SHANG:c} implies that $\{\gamma_k\}$ is decreasing. The coupling $\beta_k>0$ depends on $(\alpha_k,\gamma_k)$ and typically scales as $(1+\sigma^2)\alpha_k/\gamma_k$. Thus SHANG is a one-parameter scheme in the strongly convex case and a two-parameter scheme in the convex case.
		
		For the analysis, we introduce the auxiliary variable
		\begin{equation}\label{auxi1}
			x^+ := x - \alpha \beta g(x),
		\end{equation}
		which is one stochastic gradient step from \(x\) with step size \(\alpha \beta>0\).
		With this notation, SHANG~\eqref{SHANG} admits the following augmented but equivalent form:
		\begin{subequations}\label{SHANG2}
			\begin{align}
				\frac{x_{k+1}-x_k^+}{\alpha_k}
				&= v_k - x_{k+1}, \label{SHANG2:a}\\
				\frac{v_{k+1}-v_k}{\alpha_k}
				&= \frac{\mu}{\gamma_k}(x_{k+1}-v_{k+1}) - \frac{1}{\gamma_k}g(x_{k+1}), \label{SHANG2:b}\\
				\frac{\gamma_{k+1}-\gamma_k}{\alpha_k}
				&\le \mu - \gamma_{k+1}, \label{SHANG2:c}\\
				x_{k+1}^+
				&= x_{k+1}-\alpha_{k+1}\beta_{k+1}g(x_{k+1}). \label{SHANG2:d}
			\end{align}
		\end{subequations}
		Here the last equation only defines the next auxiliary point and does not introduce an additional algorithmic update. The auxiliary update~\eqref{SHANG2:d} gives the following descent estimate. In this one-step formulation, \(x_{k+1}\) is fixed before the fresh stochastic gradient \(g(x_{k+1})\) is sampled; the corresponding pre-sampling information is denoted by \(\mathcal F_{k+1}\). The update \(x_{k+1}^+\) is stochastic.
		
		\begin{lemma}\label{lemma2}
			Suppose that $f$ is $L$-smooth and that $x_{k+1}$ is $\mathcal F_{k+1}$-measurable. Conditional on $\mathcal F_{k+1}$, let $g(x_{k+1})$ be an unbiased stochastic gradient estimator of $\nabla f(x_{k+1})$ satisfying the MNS condition~\eqref{eq:mns}.
			Let $x_{k+1}^+$ be obtained by \eqref{SHANG2:d} with stepsize 
			$$
			0<\alpha_{k+1}\beta_{k+1}\le \frac{1}{L(1+\sigma^2)}.
			$$
			Then
			$$
			\mathbb E\left[f(x_{k+1}^+)\mid\mathcal F_{k+1}\right]
			\le
			f(x_{k+1})-\frac{\alpha_{k+1}\beta_{k+1}}{2}\|\nabla f(x_{k+1})\|^2.
			$$
		\end{lemma}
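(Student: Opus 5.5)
The plan is the standard descent-lemma argument for SGD, with the MNS bound used to control the second moment of the stochastic gradient. Write $\eta:=\alpha_{k+1}\beta_{k+1}$ and abbreviate $x=x_{k+1}$, $g=g(x_{k+1})$, so that by \eqref{SHANG2:d} we have $x^+_{k+1}=x-\eta g$. Since $f$ is $L$-smooth, the upper bound in \eqref{bregman2} (the part $D_f(y,x)\le \frac L2\|y-x\|^2$, which only uses $L$-smoothness) applied with $y=x-\eta g$ gives the pathwise inequality
\begin{equation*}
f(x^+_{k+1})\le f(x)-\eta\langle \nabla f(x),g\rangle+\frac{L\eta^2}{2}\|g\|^2 .
\end{equation*}
This holds for every realization of the mini-batch, so no probabilistic input is needed at this stage.

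Next we take the conditional expectation given $\mathcal F_{k+1}$. Since $x$ is $\mathcal F_{k+1}$-measurable, both $f(x)$ and $\nabla f(x)$ come out of the expectation. Unbiasedness \eqref{eq:unbiase} then gives $\mathbb E[\langle\nabla f(x),g\rangle\mid\mathcal F_{k+1}]=\|\nabla f(x)\|^2$. For the quadratic term we use the bias--variance decomposition
\begin{equation*}
\mathbb E[\|g\|^2\mid\mathcal F_{k+1}]=\|\nabla f(x)\|^2+\mathbb E[\|g-\nabla f(x)\|^2\mid\mathcal F_{k+1}],
\end{equation*}
in which the cross term vanishes again by unbiasedness. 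Bounding the variance by the MNS condition \eqref{eq:mns} gives $\mathbb E[\|g\|^2\mid\mathcal F_{k+1}]\le(1+\sigma^2)\|\nabla f(x)\|^2$.

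Combining these,
\begin{equation*}
\mathbb E[f(x^+_{k+1})\mid\mathcal F_{k+1}]\le f(x)-\eta\Bigl(1-\frac{L\eta(1+\sigma^2)}{2}\Bigr)\|\nabla f(x)\|^2 .
\end{equation*}
The stepsize restriction $\eta\le 1/(L(1+\sigma^2))$ makes the bracket at least $1/2$, which yields the claimed bound.

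There is no real obstacle in this argument. The only point that needs care is the measurability bookkeeping: we must use that $x_{k+1}$ is fixed before the fresh sample, so that unbiasedness and MNS apply conditionally at $x_{k+1}$ rather than at $x_k$. We also need $f(x^+_{k+1})$ to be integrable, which follows because the smoothness bound dominates it by a quadratic in $g$ and $g$ has a finite conditional second moment under MNS.
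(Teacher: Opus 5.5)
Your proposal is correct and follows essentially the same route as the paper's proof: the pathwise $L$-smoothness upper bound at $x_{k+1}^+=x_{k+1}-\alpha_{k+1}\beta_{k+1}g(x_{k+1})$, then conditional unbiasedness for the cross term, then the second-moment bound $\mathbb E[\|g(x_{k+1})\|^2\mid\mathcal F_{k+1}]\le(1+\sigma^2)\|\nabla f(x_{k+1})\|^2$ from MNS (the paper's Lemma~\ref{lemma1}), and finally the stepsize restriction to make the coefficient at least $1/2$. Your remarks on measurability and integrability go slightly beyond what the paper writes, but they do not change the argument.
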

		
		\begin{proof}
			By $L$-smoothness,
			$$
			\begin{aligned}
				f(x_{k+1}^+)
				&\le
				f(x_{k+1})
				+\left\langle \nabla f(x_{k+1}),x_{k+1}^+-x_{k+1}\right\rangle
				+\frac{L}{2}\|x_{k+1}^+-x_{k+1}\|^2 \\
				&=
				f(x_{k+1})
				-\alpha_{k+1}\beta_{k+1}\left\langle \nabla f(x_{k+1}),g(x_{k+1})\right\rangle
				+\frac{L\alpha_{k+1}^2\beta_{k+1}^2}{2}\|g(x_{k+1})\|^2 .
			\end{aligned}
			$$
			Taking conditional expectation with respect to $\mathcal F_{k+1}$, and using conditional unbiasedness, gives
			$$
			\mathbb E\left[\left\langle \nabla f(x_{k+1}),g(x_{k+1})\right\rangle\mid\mathcal F_{k+1}\right]
			=
			\|\nabla f(x_{k+1})\|^2.
			$$
			Moreover, the MNS condition implies
			$$
			\mathbb E[\|g(x_{k+1})\|^2\mid\mathcal F_{k+1}]
			\le
			(1+\sigma^2)\|\nabla f(x_{k+1})\|^2.
			$$
			Therefore
			$$
			\begin{aligned}
				\mathbb E\left[f(x_{k+1}^+)\mid\mathcal F_{k+1}\right]
				&\le
				f(x_{k+1})
				-\alpha_{k+1}\beta_{k+1}\|\nabla f(x_{k+1})\|^2 \\
				&\quad
				+\frac{L\alpha_{k+1}^2\beta_{k+1}^2(1+\sigma^2)}{2}
				\|\nabla f(x_{k+1})\|^2  \\
				&\le
				f(x_{k+1})
				-\frac{\alpha_{k+1}\beta_{k+1}}{2}\|\nabla f(x_{k+1})\|^2,
			\end{aligned}
			$$
			where the last step follows from $0<\alpha_{k+1}\beta_{k+1}\le 1/(L(1+\sigma^2)).$
		\end{proof}
		
		\begin{remark}[Role of the auxiliary variable $x^+$]
				The auxiliary variable $x^+$ is used only in the convergence analysis. This reformulation avoids mixed stochastic terms at two consecutive iterates and reveals the telescoping structure in the Lyapunov estimate. In implementation, $x^+$ is not updated separately.
			\end{remark}
		
		Define the augmented variables $z_k := (x_k, v_k)$, $z_k^{+} := (x_k^{+}, v_k)$, $z^\star := (x^\star, x^\star)$, and the discrete Lyapunov function
		\begin{equation}\label{equ6}
			\mathcal{E}(z_k^+; \gamma_{k}) =  f(x_k^+) - f(x^{\star}) + \frac{\gamma_{k}}{2} \| v_k - x^{\star} \|^2.
		\end{equation}
		The following theorem gives decay of $\mathbb{E}\left[\mathcal{E} (z_{k+1}^+; \gamma_{k+1})\right]$.
		
		\begin{theorem}\label{theorem1}
			Let $f\in\mathcal{S}_{\mu,L}$. Starting from $x_0=v_0$, generate $(x_k,v_k)$ by~\eqref{SHANG}. Let the stochastic gradients be conditionally unbiased and satisfy the MNS condition~\eqref{eq:mns}. Define $x_k^+$ by~\eqref{auxi1}. Set $\beta_k = (1+\sigma^2)\alpha_k/\gamma_k$.
			\begin{itemize}
				\item[(1)] When $0 < \mu < L < \infty$, take $\alpha_k\equiv\alpha$ and $\gamma_k\equiv\mu$. If $0 < \alpha \le \frac{1}{1+\sigma^2}\sqrt{\frac{\mu}{L}}$, then for all $k \ge 0$,
				$$
				\mathbb{E}\left[f(x_{k+1}^+) - f(x^{\star}) + \frac{\mu}{2} \| v_{k+1} - x^{\star} \|^2 \right] \le (1+\alpha)^{-(k+1)} 	\mathbb{E}[\mathcal{E}(z_0^+; \mu)].
				$$
				\item[(2)] When $\mu=0$, set $\alpha_k = \frac{2}{k+1}$ and $\gamma_{k} = \alpha_k^2(1+\sigma^2)^2L$. Then for all $k \ge 0$,
				$$
				\mathbb{E}\left[f(x_{k+1}^+ ) - f(x^{\star}) + \frac{\gamma_{k+1}}{2} \| v_{k+1} - x^{\star} \|^2 \right] \le \frac{2}{(k+2)(k+3)}\mathbb{E}[ \mathcal{E}(z_0^+; \gamma_{0})].
				$$
			\end{itemize}
		\end{theorem}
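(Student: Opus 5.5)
The argument is a one-step Lyapunov contraction in conditional expectation,
\[
\mathbb E\bigl[\mathcal E(z_{k+1}^+;\gamma_{k+1})\mid\mathcal F_k\bigr]\le \frac{1}{1+\alpha_k}\,\mathcal E(z_k^+;\gamma_k)
\]
(after a suitable tower-property conditioning), which is then unrolled. We split
\[
\mathcal E(z_{k+1}^+;\gamma_{k+1})-\mathcal E(z_k^+;\gamma_k)
\]
into three pieces: the function part $f(x_{k+1}^+)-f(x_k^+)$, the change $\frac{\gamma_{k+1}-\gamma_k}{2}\|v_{k+1}-x^\star\|^2$, and the $v$-part $\frac{\gamma_k}{2}(\|v_{k+1}-x^\star\|^2-\|v_k-x^\star\|^2)$.

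\textbf{Function part.} Lemma~\ref{lemma2} gives
\[
\mathbb E[f(x_{k+1}^+)\mid\mathcal F_{k+1}]\le f(x_{k+1})-\frac{\alpha_{k+1}\beta_{k+1}}{2}\|\nabla f(x_{k+1})\|^2.
\]
This is valid because $\alpha_{k+1}\beta_{k+1}=(1+\sigma^2)\alpha_{k+1}^2/\gamma_{k+1}$, and the choices $\alpha\le\frac{1}{1+\sigma^2}\sqrt{\mu/L}$, respectively $\gamma_k=\alpha_k^2(1+\sigma^2)^2L$, make this at most $1/(L(1+\sigma^2))$. We then write
\[
f(x_{k+1})-f(x_k^+)=\langle\nabla f(x_{k+1}),x_{k+1}-x_k^+\rangle-D_f(x_k^+,x_{k+1}).
\]
By \eqref{SHANG2:a}, $x_{k+1}-x_k^+=\alpha_k(v_k-x_{k+1})$.

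\textbf{$v$-part.} We use the identity $\|a\|^2-\|b\|^2=2\langle a-b,a\rangle-\|a-b\|^2$ with \eqref{SHANG2:b}. This yields
\[
\alpha_k\langle \mu(x_{k+1}-v_{k+1})-g(x_{k+1}),\,v_{k+1}-x^\star\rangle-\frac{\gamma_k}{2}\|v_{k+1}-v_k\|^2 .
\]
The $\gamma$-schedule inequality \eqref{SHANG:c} gives $\gamma_{k+1}-\gamma_k\le\alpha_k(\mu-\gamma_{k+1})$.

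\textbf{Combining.} We write $v_{k+1}=v_k+(v_{k+1}-v_k)$ and aim to produce the strong-convexity term
\[
-\alpha_k\langle\nabla f(x_{k+1}),x_{k+1}-x^\star\rangle\le-\alpha_k\Bigl(f(x_{k+1})-f(x^\star)+\frac{\mu}{2}\|x_{k+1}-x^\star\|^2\Bigr).
\]
Taking expectation of $g(x_{k+1})$ against the $\mathcal F_{k+1}$-measurable vector $v_k-x^\star$ replaces it by $\nabla f(x_{k+1})$, which cancels the cross term $\alpha_k\langle\nabla f(x_{k+1}),v_k-x_{k+1}\rangle$ from the function part. The $\mu$-terms are handled by the polarization identity and combine into $-\alpha_k\frac{\mu}{2}\|v_{k+1}-x^\star\|^2$ plus nonpositive squares. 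The function-value term $-\alpha_k(f(x_{k+1})-f(x^\star))$ must be converted into $-\alpha_k(f(x_{k+1}^+)-f(x^\star))$ up to another application of Lemma~\ref{lemma2}; alternatively we first multiply the whole inequality by $(1+\alpha_k)$ and regroup the terms at level $k+1$. This yields $(1+\alpha_k)\mathbb E\mathcal E_{k+1}\le\mathcal E_k$ plus residual terms.

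\textbf{Main obstacle.} The residual is the stochastic cross term
\[
-\alpha_k\langle g(x_{k+1}),v_{k+1}-v_k\rangle .
\]
Because $v_{k+1}$ depends on $g(x_{k+1})$, unbiasedness does not apply. We plan to bound it by Young's inequality,
\[
\alpha_k\langle g,v_{k+1}-v_k\rangle\le\frac{\gamma_k}{2}\|v_{k+1}-v_k\|^2+\frac{\alpha_k^2}{2\gamma_k}\|g\|^2 ,
\]
which absorbs the first term into the negative $v$-square. By MNS, $\mathbb E\|g\|^2\le(1+\sigma^2)\|\nabla f(x_{k+1})\|^2$, so the remaining term is
\[
\frac{\alpha_k^2(1+\sigma^2)}{2\gamma_k}\|\nabla f\|^2=\frac{\alpha_k\beta_k}{2}\|\nabla f\|^2 .
\]
This is exactly cancelled by the gradient decrease from Lemma~\ref{lemma2}, which is why $\beta_k=(1+\sigma^2)\alpha_k/\gamma_k$. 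We must also check the index shift between $\alpha_k\beta_k$ and $\alpha_{k+1}\beta_{k+1}$. In case (1) these are constant. In case (2) we need $\alpha_{k+1}^2/\gamma_{k+1}$ and $\alpha_k^2/\gamma_k$ to be comparable; with the given schedule both equal $1/((1+\sigma^2)^2L)$, so this works. The leftover Bregman term $-D_f(x_k^+,x_{k+1})$ is nonpositive by convexity and can be dropped.

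\textbf{Rates.} Unrolling gives $(1+\alpha)^{-(k+1)}$ in case (1). In case (2), $\prod_{i=0}^{k}(1+\frac{2}{i+1})^{-1}=\prod_{i=0}^{k}\frac{i+1}{i+3}=\frac{2}{(k+2)(k+3)}$. Here we must also verify \eqref{SHANG:c} with $\mu=0$, i.e.\ $\gamma_{k+1}(1+\alpha_k)\le\gamma_k$, which reduces to
\[
\frac{4}{(k+2)^2}\cdot\frac{k+3}{k+1}\le\frac{4}{(k+1)^2},
\]
i.e.\ $(k+1)(k+3)\le(k+2)^2$, which holds.
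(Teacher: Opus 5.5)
Your proposal is correct and follows the paper's proof essentially step for step: the same split through the intermediate $\mathcal{E}(z_{k+1};\gamma_{k+1})$, the same Young bound on $\alpha_k\langle g(x_{k+1}),v_k-v_{k+1}\rangle$ absorbed by $-\tfrac{\gamma_k}{2}\|v_{k+1}-v_k\|^2$, and the same double use of Lemma~\ref{lemma2} (once to cancel the MNS residual $\tfrac{\alpha_k\beta_k}{2}\|\nabla f(x_{k+1})\|^2$, once to trade $f(x_{k+1})$ for $f(x_{k+1}^+)$), followed by the same check of the $\gamma$-schedule and the same telescoping product. The only thing to tidy is that the one-step contraction should be conditioned on $\mathcal F_{k+1}$, which already contains $g(x_k)$ and hence $x_k^+$ and $v_k$, rather than on $\mathcal F_k$, before you take full expectations and unroll.
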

		
		We give a proof outline and defer the full proof to Section~\ref{shanganalysis}. 
		Under our filtration convention, \(\mathcal F_{k+1}\) contains the information up to the formation of \(g(x_k)\). Hence \(x_k^+\) and \(x_{k+1}\) are \(\mathcal F_{k+1}\)-measurable, while \(g(x_{k+1})\) is sampled afterward. Therefore the conditional unbiasedness and MNS assumptions apply to \(g(x_{k+1})\) conditionally on \(\mathcal F_{k+1}\).
		
		\begin{proof}[Proof outline]
			Insert the intermediate term $\mathcal{E}(z_{k+1};\gamma_{k+1})$ and write
			\begin{equation}\label{equ147}
				\mathbb{E}\left[\mathcal{E}(z_{k+1}^+;\gamma_{k+1})-\mathcal{E}(z_{k+1};\gamma_{k+1})\right] +\mathbb{E}\left[\mathcal{E}(z_{k+1};\gamma_{k+1}) -\mathcal{E}(z_k^+;\gamma_k)\right].
			\end{equation}
			By Lemma~\ref{lemma2}, with $\alpha_{k+1}\beta_{k+1}=\alpha_k\beta_k=\frac{\alpha_k^2(1+\sigma^2)}{\gamma_k}\le \frac{1}{(1+\sigma^2)L}$, we have
			\begin{equation}\label{SGDdecay1}
				\mathbb{E}\left[\mathcal{E}(z_{k+1}^+;\gamma_{k+1})-\mathcal{E}(z_{k+1};\gamma_{k+1})\right] \leq - \mathbb{E}\left[ \frac{\alpha_k^2(1+\sigma^2)}{2\gamma_k}\| \nabla f (x_{k+1}) \|^2 \right].
			\end{equation}
			Thus the shift from $x_{k+1}$ to $x_{k+1}^+$ gives the required descent.
			
			Using the definition of Bregman divergence and the schedule for $\gamma_k$, we split the second term in~\eqref{equ147} as
			\begin{equation}\label{equ145}
				\begin{aligned}
					&\mathcal{E}(z_{k+1}; \gamma_{k+1}) - \mathcal{E}(z_{k}^+; \gamma_{k}) \\
					&= \mathcal{E}(z_{k+1}; \gamma_{k}) - \mathcal{E}(z_{k}^+; \gamma_{k}) + \frac{\gamma_{k+1} - \gamma_{k}}{2} \| v_{k+1} - x^{\star} \|^2 \\
					&\le \langle \nabla \mathcal{E}(z_{k+1}; \gamma_{k}), z_{k+1} - z_k^+ \rangle - D_{\mathcal{E}} (z_k^+, z_{k+1}; \gamma_{k})+ \frac{\alpha_{k}(\mu-\gamma_{k+1})}{2} \| v_{k+1} - x^{\star} \|^2.
				\end{aligned}
			\end{equation}
			Expanding the first term and using~\eqref{SHANG} gives
			\begin{equation}\label{equ146}
				\begin{aligned}
					&	\langle \nabla \mathcal{E}(z_{k+1}; \gamma_{k}), z_{k+1} - z_k^+ \rangle\\
					={}			& -\alpha_k \langle \nabla f(x_{k+1}) - \nabla f(x^{\star}), x_{k+1} - x^{\star} \rangle + \alpha_k \langle g(x_{k+1}), v_k - v_{k+1} \rangle \\
					& - \frac{\alpha_k\mu}{2} \| v_{k+1} - x^{\star} \|^2 - \frac{\alpha_k\mu}{2} \| x_{k+1} - v_{k+1} \|^2 + \frac{\alpha_k\mu}{2} \| x_{k+1} - x^{\star} \|^2 \\
					& + \alpha_k \langle \nabla f(x_{k+1}) - g (x_{k+1}), v_k - x^{\star}  \rangle .
				\end{aligned}
			\end{equation}
			After conditional expectation $\mathbb E[\cdot \mid \mathcal F_{k+1}]$, the last term vanishes. The positive term $\frac{\alpha_k\mu}{2} \| x_{k+1} - x^{\star} \|^2$ is controlled by the strong convexity part of
			$$
			-\alpha_k \langle \nabla f(x_{k+1}) - \nabla f(x^{\star}), x_{k+1} - x^{\star} \rangle
			= -\alpha_k \left(D_f(x_{k+1}, x^{\star}) + D_f(x^{\star}, x_{k+1})\right).
			$$
			The key stochastic term is the cross term involving $g(x_{k+1})$ and $v_{k+1}$. By Cauchy--Schwarz and Young's inequalities,
			\begin{equation}
				\begin{aligned}
					\alpha_k \langle g(x_{k+1}), v_k - v_{k+1} \rangle
					&\le \frac{\alpha_k^2}{2\gamma_{k}} \| g(x_{k+1})\|^2 + \frac{\gamma_{k}}{2} \| v_k - v_{k+1} \|^2 .
				\end{aligned}
			\end{equation}
			The term $\frac{\gamma_k}{2}\|v_k-v_{k+1}\|^2$ is canceled by $-D_{\mathcal{E}}(z_k^+,z_{k+1};\gamma_k)$. The MNS bound in Lemma \ref{lemma1} gives
			$$
			\frac{\alpha_k^2}{2\gamma_k} \mathbb{E}\left[\|g(x_{k+1})\|^2 \mid \mathcal{F}_{k+1} \right] \leq \frac{\alpha_k^2(1+\sigma^2)}{2\gamma_k}\|\nabla f(x_{k+1})\|^2,
			$$
			which is canceled by~\eqref{SGDdecay1}. The remaining negative terms give
			$$
			-\alpha_k\mathcal{E}(z_{k+1};\gamma_{k+1})\leq -\alpha_k\mathcal{E}(z_{k+1}^+;\gamma_{k+1}).
			$$
			Rearranging yields
			$$
			\mathbb{E}\left[\mathcal{E}(z_{k+1}^+;\gamma_{k+1})\right]\leq \frac{1}{1+\alpha_k}\mathbb{E}\left[ \mathcal{E}(z_k^+;\gamma_k) \right] \le \prod_{i=0}^{k} \frac{1}{1+\alpha_i} \mathbb{E}[\mathcal{E}(z_0^+;\gamma_0)].
			$$
		\end{proof}
		
		When $\sigma=0$, SHANG reduces to the deterministic HNAG method analyzed in~\cite{ChenLuo2021}. As $\sigma$ grows, convergence slows but acceleration is preserved through the noise-dependent stepsize. Theorem~\ref{theorem1} implies convergence in expectation of $f(x_k^+)$ to $f(x^\star)$. It also implies almost-sure convergence by Markov's inequality and the Borel--Cantelli lemma; see Section~\ref{corollaryanalysis}.
		\begin{corollary}\label{corollary1}
			In the setting of Theorem~\ref{theorem1}, $f(x_k^+) \overset{a.s.}{\rightarrow} f(x^{\star})$.
		\end{corollary}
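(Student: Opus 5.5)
The plan is to turn the expectation bounds of Theorem~\ref{theorem1} into summable tail probabilities and then apply the Borel--Cantelli lemma. Set $Y_k := f(x_k^+) - f(x^\star)$. Since $x^\star$ is a global minimizer, $Y_k \ge 0$ almost surely. The Lyapunov function also contains the term $\frac{\gamma_k}{2}\|v_k - x^\star\|^2 \ge 0$, so $0 \le Y_k \le \mathcal{E}(z_k^+;\gamma_k)$. Theorem~\ref{theorem1} then gives $\mathbb{E}[Y_{k}] \le \rho_k\, \mathbb{E}[\mathcal{E}(z_0^+;\gamma_0)]$ for $k\ge 1$, with
\begin{itemize}
\item[(1)] $\rho_k = (1+\alpha)^{-k}$ in the strongly convex case;
\item[(2)] $\rho_k = \frac{2}{(k+1)(k+2)}$ in the convex case.
\end{itemize}

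First I check that $C_0 := \mathbb{E}[\mathcal{E}(z_0^+;\gamma_0)]$ is finite. Here $x_0 = v_0$ is deterministic and $x_0^+ = x_0 - \alpha_0\beta_0 g(x_0)$. By $L$-smoothness,
$$
f(x_0^+) \le f(x_0) - \alpha_0\beta_0\langle \nabla f(x_0), g(x_0)\rangle + \tfrac{L}{2}\alpha_0^2\beta_0^2\|g(x_0)\|^2 .
$$
The MNS condition gives $\mathbb{E}\|g(x_0)\|^2 \le (1+\sigma^2)\|\nabla f(x_0)\|^2 < \infty$, so $C_0 < \infty$. Alternatively, Lemma~\ref{lemma2} applies directly at index $0$.

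Next, fix $\varepsilon > 0$. By Markov's inequality,
$$
\mathbb{P}(Y_k > \varepsilon) \le \varepsilon^{-1}\mathbb{E}[Y_k] \le \varepsilon^{-1} C_0\, \rho_k .
$$
In both cases $\sum_k \rho_k < \infty$: it is a geometric series with ratio $(1+\alpha)^{-1} < 1$ in case (1), and it is $O(k^{-2})$ in case (2). Hence $\sum_k \mathbb{P}(Y_k > \varepsilon) < \infty$. Borel--Cantelli then shows that almost surely $Y_k \le \varepsilon$ for all sufficiently large $k$.

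Finally, apply this with $\varepsilon = 1/m$ for each $m \in \mathbb{N}$ and intersect the countably many probability-one events. On the resulting full-measure event, $\limsup_k Y_k \le 1/m$ for every $m$. Together with $Y_k \ge 0$, this gives $Y_k \to 0$, that is, $f(x_k^+) \to f(x^\star)$ almost surely.

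The main point requiring care is not the probabilistic argument but the bookkeeping. Theorem~\ref{theorem1} controls the \emph{unconditional} expectation of the Lyapunov function. This is exactly what Markov's inequality needs, so the dependence structure of the iterates plays no role. One must also confirm that the $k=0$ term is finite and that the rates are summable. Both hold here. Summability would fail for a rate like $1/k$, which would instead need a subsequence-plus-monotonicity argument; the $O(k^{-2})$ rate of Theorem~\ref{theorem1}(2) avoids this issue.
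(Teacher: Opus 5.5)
Your proposal is correct and follows essentially the same route as the paper: Markov's inequality on the nonnegative gap $f(x_k^+)-f(x^\star)$, summability of the $O(q^k)$ and $O(k^{-2})$ bounds from Theorem~\ref{theorem1}, and Borel--Cantelli. You also check that $\mathbb{E}[\mathcal{E}(z_0^+;\gamma_0)]<\infty$ and spell out the countable intersection over $\varepsilon=1/m$; the paper leaves both of these implicit.
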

		
		\subsection{SHANG++}\label{sec:shang++}
		In SHANG, the $x$- and $v$-updates use the same stepsize $\alpha_k$, which couples the coefficients in the two equations. This shared scaling is natural from the underlying dynamics, but more flexibility is useful under MNS, where the effective constants are rescaled.
		
		We therefore use asymmetric stepsize scalings: a reduced effective stepsize $\tilde{\alpha}_k$ in the $x$-update and $\alpha_k$ in the $v$-update. The time-scaling sequence $\{\gamma_k\}$ will be specified later. This gives SHANG++, a faster and more noise-robust variant of SHANG:
		\begin{subequations}\label{SHANG++}
			\begin{align}
				\frac{x_{k+1}-x_k}{\tilde{\alpha}_k}
				&= v_k - x_{k+1}  - \beta_k g(x_k) \label{SHANG++:a}\\
				\frac{v_{k+1}-v_k}{\alpha_k}
				&= \frac{\mu}{\gamma_k}(x_{k+1}-v_{k+1}) - \frac{1}{\gamma_k}g(x_{k+1}) \label{SHANG++:b}
			\end{align}
		\end{subequations}
		initialized with $x_0$ and $v_0$, where $\tilde{\alpha}_k := \frac{\alpha_k}{1+m\alpha_k}$ and $m\ge 0$. Equivalently, \eqref{SHANG++:a} can be written as
		\begin{equation}\label{equ23}
			\frac{x_{k+1}-x_k}{\alpha_k}
			= v_k - x_{k+1} - m(x_{k+1}-x_k) - \beta_k g(x_k).
		\end{equation}
		Thus SHANG++ adds the correction term $-m(x_{k+1}-x_k)$ to the $x$-update of SHANG~\eqref{SHANG:a}. The parameter $m$ controls the correction strength, and $m=0$ recovers SHANG. The notation ``++'' indicates that this extra degree of freedom gives stronger convergence guarantees and improved robustness in the reported experiments.
		
		\subsubsection{SHANG++ for Strongly Convex Minimization}
		Assume $f\in\mathcal{S}_{\mu,L}$ with $0<\mu<L<\infty$. We consider SHANG++ with constant parameters $\gamma_k\equiv \mu$ and $\alpha_k\equiv \alpha$, and choose $m=1$, so that $\tilde{\alpha}=\frac{\alpha}{1+\alpha}$. Define
		\begin{equation}\label{auxi2}
			x_{k}^+ := x_{k} - \tilde{\alpha} \beta g(x_{k}).
		\end{equation}
		Then SHANG++~\eqref{SHANG++} can be written as
		\begin{subequations}\label{equ26}
			\begin{align}
				\frac{x_{k+1} - x_k^+}{\tilde{\alpha}} &= v_k - x_{k+1} \label{equ26:a}\\
				\frac{v_{k+1} - v_k}{\alpha} & = x_{k+1} - v_{k+1} - \tfrac{1}{\mu} g(x_{k+1}) \label{equ26:b} \\
				x_{k+1}^+ & = x_{k+1} - \tilde{\alpha} \beta g(x_{k+1})  \label{equ26:c}
			\end{align}
		\end{subequations}
		initialized with $x_0$ and $v_0$.
		
		\begin{theorem}\label{theorem2}
			Let $f\in\mathcal{S}_{\mu,L}$. Starting from $x_0=v_0$, generate $(x_k,v_k)$ by~\eqref{equ26}. Let the stochastic gradients be conditionally unbiased and satisfy the MNS condition~\eqref{eq:mns}. Define $x_k^+$ by~\eqref{auxi2}. Choose $\alpha=\frac{\tilde{\alpha}}{1-\tilde{\alpha}}$ and $\beta=\frac{\tilde{\alpha}}{\mu/(1+\sigma^2)}$ with $0<\tilde{\alpha}\le \frac{1}{1+\sigma^2}\sqrt{\frac{\mu}{L}}$. Then for all $k\ge 0$,
			$$
			\mathbb{E}\left[f(x_{k+1}^+) - f(x^{\star}) + \frac{\mu}{2} \| v_{k+1} - x^{\star} \|^2 \right]
			\le (1 - \tilde\alpha)^{k+1}\mathbb{E}[  \mathcal{E}(z_0^+; \mu)].
			$$
		\end{theorem}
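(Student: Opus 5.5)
The plan is to mirror the Lyapunov argument used for Theorem~\ref{theorem1}(1). The goal is a one-step contraction
\[
\mathbb E\big[\mathcal E(z_{k+1}^+;\mu)\big]\le (1-\tilde\alpha)\,\mathbb E\big[\mathcal E(z_k^+;\mu)\big],
\]
after which the result follows by induction and the tower property. We insert the intermediate value $\mathcal E(z_{k+1};\mu)$ and split the difference as in \eqref{equ147}.

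\textbf{Step 1: descent from the auxiliary step.} Note that $\tilde\alpha\beta=\tilde\alpha^2(1+\sigma^2)/\mu\le 1/(L(1+\sigma^2))$ by the restriction on $\tilde\alpha$. Hence Lemma~\ref{lemma2}, applied with stepsize $\tilde\alpha\beta$ in place of $\alpha_{k+1}\beta_{k+1}$, gives
\[
\mathbb E\big[\mathcal E(z_{k+1}^+;\mu)-\mathcal E(z_{k+1};\mu)\mid\mathcal F_{k+1}\big]\le -\frac{\tilde\alpha^2(1+\sigma^2)}{2\mu}\|\nabla f(x_{k+1})\|^2 .
\]

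\textbf{Step 2: the main step.} Since $\mathcal E$ is convex in $z$, we write
\[
\mathcal E(z_{k+1};\mu)-\mathcal E(z_k^+;\mu)=\langle\nabla\mathcal E(z_{k+1}),z_{k+1}-z_k^+\rangle - D_{\mathcal E}(z_k^+,z_{k+1}).
\]
We then substitute $x_{k+1}-x_k^+=\tilde\alpha(v_k-x_{k+1})$ from \eqref{equ26:a} and $v_{k+1}-v_k=\alpha(x_{k+1}-v_{k+1})-\frac{\alpha}{\mu}g(x_{k+1})$ from \eqref{equ26:b}.

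The $v$-part gives three contributions:
\begin{itemize}
\item $-\alpha\mu\langle v_{k+1}-x_{k+1},v_{k+1}-x^\star\rangle$, which is expanded by the polarization identity;
\item the cross term $\alpha\langle g(x_{k+1}),v_k-v_{k+1}\rangle$, handled as in the outline;
\item the martingale term $\alpha\langle\nabla f-g,v_k-x^\star\rangle$, which vanishes in conditional expectation.
\end{itemize}
The $x$-part is $\tilde\alpha\langle\nabla f(x_{k+1}),v_k-x_{k+1}\rangle$. Writing $v_k-x_{k+1}=(v_k-x^\star)+(x^\star-x_{k+1})$, it produces $-\tilde\alpha\,(f(x_{k+1})-f(x^\star)+D_f(x^\star,x_{k+1}))$ together with $\tilde\alpha\langle\nabla f(x_{k+1}),v_k-x^\star\rangle$.

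The key difference from SHANG is that the two equations now carry the different weights $\tilde\alpha$ and $\alpha$. The condition $\alpha=\tilde\alpha/(1-\tilde\alpha)$, equivalently $\tilde\alpha=\alpha/(1+\alpha)$ with $m=1$, is what makes the inner products $\langle\nabla f(x_{k+1}),v-x^\star\rangle$ from the two parts combine. Concretely, I would use \eqref{equ26:b} to rewrite $v_k-x^\star$ in terms of $(1+\alpha)(v_{k+1}-x^\star)-\alpha(x_{k+1}-x^\star)+\frac{\alpha}{\mu}g$. This is where the choice $m=1$ enters.

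The Bregman remainder $D_{\mathcal E}(z_k^+,z_{k+1})$ contains $\frac{\mu}{2}\|v_k-v_{k+1}\|^2$. Together with Young's inequality, this absorbs the cross term up to $\frac{\alpha^2}{2\mu}\|g\|^2$. By MNS, that quantity is at most $\frac{\alpha^2(1+\sigma^2)}{2\mu}\|\nabla f\|^2$ in conditional expectation.

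\textbf{Main obstacle.} Here the mismatch is visible: Step~1 only supplies $\tilde\alpha^2(1+\sigma^2)/(2\mu)$, with $\tilde\alpha<\alpha$. I therefore expect to work with the rescaled inequality, multiplied by $1-\tilde\alpha=1/(1+\alpha)$. The goal is the form
\[
\mathcal E(z_{k+1}^+)-\mathcal E(z_k^+)\le-\tilde\alpha\,\mathcal E(z_k^+)
\]
rather than $-\alpha\,\mathcal E(z_{k+1})$, since $\tilde\alpha^2(1+\alpha)^2=\alpha^2$ is exactly the identity that would match the gradient-norm coefficients. So the plan is to carry out the bookkeeping with the reference point $z_k^+$, which also accounts for the rate $(1-\tilde\alpha)$ instead of $(1+\alpha)^{-1}$. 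In that bookkeeping, the term $-\tilde\alpha D_f(x^\star,x_{k+1})\le-\tilde\alpha\frac{\mu}{2}\|x_{k+1}-x^\star\|^2$ from strong convexity cancels the positive $\|x_{k+1}-x^\star\|^2$ produced by polarization. Verifying that every coefficient closes, especially that $\|x_{k+1}-v_{k+1}\|^2$ keeps a nonpositive sign, is the step I expect to be delicate.

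\textbf{Step 3: conclusion.} The remaining negative terms are $-\tilde\alpha(f(x_{k+1})-f^\star)$ and $-\tilde\alpha\frac{\mu}{2}\|v_{k+1}-x^\star\|^2$. Using $f(x_{k+1}^+)\le f(x_{k+1})$ in expectation, they combine into a factor $(1-\tilde\alpha)$ on the Lyapunov function. Taking full expectations and iterating over $k$ then yields the stated bound.
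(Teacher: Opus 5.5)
There is a genuine gap, and it sits exactly at the mismatch you flag. In your bookkeeping the $v$-part contributes $\alpha\langle g(x_{k+1}),v_k-v_{k+1}\rangle$, which Young turns into $\frac{\alpha^2(1+\sigma^2)}{2\mu}\|\nabla f(x_{k+1})\|^2$. The polarization contributes $+\frac{\alpha\mu}{2}\|x_{k+1}-x^\star\|^2$, while your $x$-part supplies only $-\tilde\alpha\frac{\mu}{2}\|x_{k+1}-x^\star\|^2$. Neither coefficient closes.

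Multiplying the inequality by $1-\tilde\alpha$ cannot repair this, because it scales positive and negative terms alike. In this decomposition the descent of Lemma~\ref{lemma2} enters only $1+\alpha$ times:
\begin{itemize}
\item once through $\mathcal E(z_{k+1}^+)-\mathcal E(z_{k+1})$;
\item $\alpha$ times when $-\alpha\bigl(f(x_{k+1})-f(x^\star)\bigr)$ is converted into $-\alpha\bigl(f(x_{k+1}^+)-f(x^\star)\bigr)$.
\end{itemize}
That gives $(1+\alpha)\tilde\alpha^2=\alpha\tilde\alpha$, which falls short of $\alpha^2$ by the factor $1+\alpha$. Nothing in the argument produces the $(1+\alpha)^2$ that your identity $\tilde\alpha^2(1+\alpha)^2=\alpha^2$ would require.

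Step~3 has two further problems. It discards the second of these descent copies by using only $f(x_{k+1}^+)\le f(x_{k+1})$. Also, negative terms $-\tilde\alpha\,\mathcal E(z_{k+1})$ against the unscaled difference give only the factor $1/(1+\tilde\alpha)$. Your target $-\tilde\alpha\,\mathcal E(z_k^+)$ is equivalent to $-\alpha\,\mathcal E(z_{k+1}^+)$, so the negative terms at the new point must carry weight $\alpha$, not $\tilde\alpha$.

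The missing idea is how to redistribute the leftover $-(\alpha-\tilde\alpha)\langle\nabla f(x_{k+1}),v_k-x^\star\rangle$. Use $\alpha-\tilde\alpha=\alpha\tilde\alpha$ and split $v_k-x^\star=(x_{k+1}-x^\star)+(v_k-v_{k+1})+(v_{k+1}-x_{k+1})$. Before splitting, replace $\nabla f(x_{k+1})$ by $g(x_{k+1})$ on the $\mathcal F_{k+1}$-measurable vector $v_k-x_{k+1}$; this costs only a martingale term. Your proposed rewriting through $v_{k+1}$ instead pairs $\nabla f(x_{k+1})$ with the $g$-dependent vector $v_{k+1}$, which forces further lossy Young estimates rather than a martingale cancellation.

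With the correct split, each piece does one job:
\begin{itemize}
\item The first piece raises the strong-convexity term from $-\tilde\alpha$ to $-\tilde\alpha(1+\alpha)=-\alpha$ times $\langle\nabla f(x_{k+1}),x_{k+1}-x^\star\rangle$. This matches the polarization and leaves $-\alpha\bigl(f(x_{k+1})-f(x^\star)\bigr)$.
\item The second lowers the coefficient of $\langle g(x_{k+1}),v_k-v_{k+1}\rangle$ from $\alpha$ to $\alpha(1-\tilde\alpha)=\tilde\alpha$.
\item The third produces $\alpha\tilde\alpha\langle g(x_{k+1}),x_{k+1}-v_{k+1}\rangle$. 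Young absorbs it exactly into the $-\frac{\alpha\mu}{2}\|x_{k+1}-v_{k+1}\|^2$ from polarization, which settles your worry about that sign.
\end{itemize}
The total positive term is then $\frac{(1+\alpha)\tilde\alpha^2(1+\sigma^2)}{2\mu}\|\nabla f(x_{k+1})\|^2$. This is cancelled exactly by the $1+\alpha$ copies of the descent, giving $\mathbb E[\mathcal E(z_{k+1}^+;\mu)]\le\frac{1}{1+\alpha}\mathbb E[\mathcal E(z_k^+;\mu)]=(1-\tilde\alpha)\,\mathbb E[\mathcal E(z_k^+;\mu)]$. This is the paper's route through Lemma~\ref{lemma7}.
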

		
		Theorem~\ref{theorem2} shows that $\mathbb{E}[f(x_{k+1}^+) - f(x^{\star})]$ contracts linearly at rate $\mathcal{O}\left((1 - \frac{1}{1+\sigma^{2}}\sqrt{\mu/L})^{k+1}\right)$, which is sharper than $\mathcal{O}\left((1 + \frac{1}{1+\sigma^{2}}\sqrt{\mu/L})^{-(k+1)}\right)$ from Theorem~\ref{theorem1}.
		
		In the deterministic case $\sigma=0$, the choice $m=\beta\mu$ recovers the HNAG++ method in~\cite{ChenXu2025}. Since $\beta\mu=\tilde{\alpha}\le \sqrt{\mu/L}$ in that case, the choice $m=1$ used in Theorem~\ref{theorem2} allows a stronger correction when $\mu/L$ is small. This extra freedom is useful in both tuning and analysis.
		
		We give a proof outline and defer the full proof to Section~\ref{shang++scanalysis}. 
		\begin{proof}[Proof outline]
			Introduce $\mathcal{E}(z_{k+1};\mu)$ as an intermediate quantity. Let $\mathcal F_{k+1}$ be the sigma-algebra generated by the information available before sampling $g(x_{k+1})$. Then $x_k^+$, $x_{k+1}$, and $v_k$ are $\mathcal F_{k+1}$-measurable, while $v_{k+1}$ is determined after $g(x_{k+1})$ is sampled. 
			
			By Lemma~\ref{lemma2}, applied conditionally to
			$$
			x_{k+1}^+=x_{k+1}-\tilde{\alpha}\beta g(x_{k+1}),
			$$
			we have
			\begin{equation}\label{eq:SDGdecay}
				\mathbb{E}\left[f(x_{k+1}^+)-f(x_{k+1})\mid \mathcal F_{k+1}\right]
				\le
				-\frac{\tilde{\alpha}\beta}{2}\|\nabla f(x_{k+1})\|^2
				=
				-\frac{\tilde{\alpha}^2(1+\sigma^2)}{2\mu}\|\nabla f(x_{k+1})\|^2 .
			\end{equation}
			Here we used $\beta=\tilde{\alpha}(1+\sigma^2)/\mu$ and $\tilde{\alpha}\beta$ satisfies the requirement in Lemma~\ref{lemma2}. 
			Expanding $\mathcal{E}(z_{k+1};\mu)-\mathcal{E}(z_k^+;\mu)$ and using~\eqref{equ26}, we obtain
			\begin{equation}\label{equ102}
				\begin{aligned}
					&\mathcal{E}(z_{k+1};\mu)-\mathcal{E}(z_k^+;\mu) \\
					={}& -\alpha \langle \nabla f(x_{k+1}) - \nabla f(x^{\star}), x_{k+1} - x^{\star} \rangle  -D_f(x_k^+,x_{k+1})-\frac{\mu}{2}\|v_k-v_{k+1}\|^2 \\
					&- \frac{\alpha\mu}{2} \| v_{k+1} - x^{\star} \|^2 - \frac{\alpha\mu}{2} \| v_{k+1} - x_{k+1} \|^2+ \frac{\alpha\mu}{2} \| x_{k+1} - x^{\star} \|^2 \\
					&+ \tilde{\alpha} \langle g(x_{k+1}), v_k - v_{k+1} \rangle + \alpha\tilde{\alpha} \langle g(x_{k+1}), x_{k+1} - v_{k+1} \rangle\\
					& +\tilde{\alpha}\langle \nabla f(x_{k+1})-g(x_{k+1}),v_k-x_{k+1}\rangle+\alpha\langle \nabla f(x_{k+1})-g(x_{k+1}),x_{k+1}-x^\star\rangle .
				\end{aligned}
			\end{equation}
			Compared with SHANG, the SHANG++ expansion has one extra stochastic cross term caused by the asymmetric stepsizes, and this changes the cancellation from a single auxiliary descent to a $(1+\alpha)$-weighted auxiliary descent.
			
			Taking conditional expectation with respect to $\mathcal F_{k+1}$, the last two terms vanish by conditional unbiasedness. Strong convexity gives
			$$
			-\alpha \langle \nabla f(x_{k+1})-\nabla f(x^\star),x_{k+1}-x^\star\rangle
			\le
			-\alpha\bigl(f(x_{k+1})-f(x^\star)\bigr)
			-\frac{\alpha\mu}{2}\|x_{k+1}-x^\star\|^2.
			$$
			Young's inequality gives
			$$
			\tilde{\alpha}\langle g(x_{k+1}),v_k-v_{k+1}\rangle
			\le
			\frac{\tilde{\alpha}^2}{2\mu}\|g(x_{k+1})\|^2
			+
			\frac{\mu}{2}\|v_k-v_{k+1}\|^2,
			$$
			and
			$$
			\alpha\tilde{\alpha}\langle g(x_{k+1}),x_{k+1}-v_{k+1}\rangle
			\le
			\frac{\alpha\tilde{\alpha}^2}{2\mu}\|g(x_{k+1})\|^2
			+
			\frac{\alpha\mu}{2}\|x_{k+1}-v_{k+1}\|^2.
			$$
			Using the conditional MNS bound, we obtain
			\begin{equation}\label{eq:cond-positive-term}
				\begin{aligned}
					&\mathbb{E}\left[\mathcal{E}(z_{k+1};\mu)-\mathcal{E}(z_k^+;\mu)\mid \mathcal F_{k+1}\right] \\
					&\le
					-\alpha\left(f(x_{k+1})-f(x^\star)+\frac{\mu}{2}\|v_{k+1}-x^\star\|^2\right)
					-D_f(x_k^+,x_{k+1}) \\
					&\quad
					+\frac{(1+\alpha)\tilde{\alpha}^2(1+\sigma^2)}{2\mu}
					\|\nabla f(x_{k+1})\|^2 .
				\end{aligned}
			\end{equation}
			The positive gradient-norm term in~\eqref{eq:cond-positive-term} is canceled by the descent in~\eqref{eq:SDGdecay} with factor $1+\alpha$. Indeed,
			$$
			(1+\alpha)\mathbb{E}\left[f(x_{k+1}^+)-f(x_{k+1})\mid\mathcal F_{k+1}\right]
			\le
			-\frac{(1+\alpha)\tilde{\alpha}^2(1+\sigma^2)}{2\mu}
			\|\nabla f(x_{k+1})\|^2.
			$$
			Combining this estimate with~\eqref{eq:cond-positive-term} gives
			$$
			\mathbb{E}\left[\mathcal{E}(z_{k+1}^+;\mu)-\mathcal{E}(z_k^+;\mu)\mid\mathcal F_{k+1}\right]
			\le
			-\alpha\mathbb{E}\left[\mathcal{E}(z_{k+1}^+;\mu)\mid\mathcal F_{k+1}\right].
			$$
			Hence
			$$
			\mathbb{E}\left[\mathcal{E}(z_{k+1}^+;\mu)\mid\mathcal F_{k+1}\right]
			\le
			\frac{1}{1+\alpha}\mathcal{E}(z_k^+;\mu)
			=
			(1-\tilde{\alpha})\mathcal{E}(z_k^+;\mu),
			$$
			where the last equality follows from $\alpha=\tilde{\alpha}/(1-\tilde{\alpha})$.
		\end{proof}
		
		While Gupta et al.~\cite{GuptaSiegelWojtowytsch2024} view noise as inflating smoothness to $(1+\sigma^2)L$, our analysis shows that it changes both smoothness and curvature, with $L_{\sigma} = (1+\sigma^2)L$ and $\mu_{\sigma} = \mu/(1+\sigma^2)$. Thus the admissible stepsize scaling differs between SHANG and SHANG++:
		$$
		\text{(SHANG)} \quad 0 < \alpha \le \sqrt{\frac{\mu_{\sigma}}{L_{\sigma}}},
		\qquad
		\text{(SHANG++)} \quad 0 < \alpha \le \frac{1}{1-\tilde{\alpha}}\sqrt{\frac{\mu_{\sigma}}{L_{\sigma}}}.
		$$
		The noise-damping term in SHANG++ reduces the effective Lipschitz constant from $L_{\sigma}$ to $(1-\tilde{\alpha})L_{\sigma}$ and increases the effective strongly convex constant from $\mu_{\sigma}$ to $\mu_{\sigma}/(1-\tilde{\alpha})$. This helps explain the improved stability and convergence guarantees of SHANG++.
		
		\subsubsection{SHANG++ Method for Convex Minimization}
		Assume $\mu = 0$ and $m \ge 0$. To streamline the analysis, introduce an effective time-scaling sequence $\{\tilde{\gamma}_k\}$ satisfying $\frac{\gamma_k}{\tilde{\gamma}_k} = \frac{\alpha_k}{\tilde{\alpha}_k} = 1+m \alpha_k$. We choose
		$$
		\alpha_k = \frac{2}{k+1}, \quad \gamma_k = \alpha_k \tilde{\alpha}_k (1+\sigma^2)^2L, \quad \beta_k = \frac{\alpha_k(1+\sigma^2)}{\gamma_k}.
		$$
		Equivalently,
		$$
		\tilde{\alpha}_k = \frac{2}{k+1+2m}, \quad \tilde{\gamma}_k = \tilde{\alpha}_k^2(1+\sigma^2)^2L, \quad \beta_k = \frac{\tilde{\alpha}_k(1+\sigma^2)}{\tilde{\gamma}_k}.
		$$
		With these schedules, SHANG++~\eqref{SHANG++} can be written as
		\begin{subequations}\label{equ120}
			\begin{align}
				\frac{x_{k+1} - x_k}{\tilde{\alpha}_k} &= v_k - x_{k+1} - \beta_k g(x_k) \label{equ120:a}\\
				\frac{v_{k+1} - v_k}{\tilde{\alpha}_k} &= -\frac{1}{\tilde{\gamma}_{k}} g(x_{k+1}) \label{equ120:b}\\
				\frac{\tilde{\gamma}_{k+1} - \tilde{\gamma}_k}{\tilde{\alpha}_k} &\le - \tilde{\gamma}_{k+1} \label{equ120:c}
			\end{align}
		\end{subequations}
		initialized with $x_0$, $v_0$, and $\tilde{\gamma}_0$.
		
		\begin{theorem}\label{theorem3}
			Let $f\in\mathcal{S}_{0,L}$. Starting from $x_0=v_0$, generate $(x_k,v_k)$ by~\eqref{SHANG++}. Let the stochastic gradients be conditionally unbiased and satisfy the MNS condition~\eqref{eq:mns}. Define $x_k^+ = x_k - \tilde{\alpha}_k\beta_k g(x_k)$. Assume $m \ge 0$, $\alpha_k = \frac{2}{k+1}$, $\gamma_k/(1+\sigma^2)  = \alpha_k \tilde{\alpha}_k L_{\sigma}$, and $\beta_k = \frac{\alpha_k}{\gamma_k/(1+\sigma^2)}$. Then for all $k\ge 0$,
			$$
			\mathbb{E}\left[f(x_{k+1}^+) - f(x^{\star}) + \frac{\tilde{\gamma}_{k+1}}{2} \| v_{k+1} - x^{\star} \|^2 \right] \le \frac{(1+2m)(2+2m)}{(k+2+2m)(k+3+2m)}\mathcal{E}_0,
			$$
			where $\mathcal{E}_0 = \mathbb{E}[ \mathcal{E}(z_0^+; \tilde{\gamma}_0)]$.
		\end{theorem}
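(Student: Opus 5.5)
The plan is to run the Lyapunov argument of Theorem~\ref{theorem1}(2), applied to the reformulated scheme \eqref{equ120}. In that scheme SHANG++ with $\mu=0$ is exactly SHANG with stepsize $\tilde{\alpha}_k$ and time scaling $\tilde{\gamma}_k$.

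First I check the parameter identities.
\begin{itemize}
\item From $\gamma_k/\tilde{\gamma}_k=\alpha_k/\tilde{\alpha}_k$ we get $\tilde{\gamma}_k=\tilde{\alpha}_k^2(1+\sigma^2)^2L$ and $\beta_k=\tilde{\alpha}_k(1+\sigma^2)/\tilde{\gamma}_k$.
\item The $v$-update $(v_{k+1}-v_k)/\alpha_k=-g(x_{k+1})/\gamma_k$ is the same as $(v_{k+1}-v_k)/\tilde{\alpha}_k=-g(x_{k+1})/\tilde{\gamma}_k$.
\item The effective gradient stepsize is $\tilde{\alpha}_k\beta_k=\tilde{\alpha}_k^2(1+\sigma^2)/\tilde{\gamma}_k=1/((1+\sigma^2)L)$. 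This is constant, so Lemma~\ref{lemma2} applies at every step with the admissible stepsize.
\item The schedule condition \eqref{equ120:c}, i.e.\ $\tilde{\gamma}_{k+1}(1+\tilde{\alpha}_k)\le\tilde{\gamma}_k$, reduces with $\tilde{\alpha}_k=2/(k+1+2m)$ and $n:=k+1+2m$ to $\frac{4}{(n+1)^2}\cdot\frac{n+2}{n}\le \frac{4}{n^2}$. This is equivalent to $n(n+2)\le (n+1)^2$, which holds.
\end{itemize}

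Next comes the one-step estimate. I split $\mathcal{E}(z_{k+1}^+;\tilde{\gamma}_{k+1})-\mathcal{E}(z_k^+;\tilde{\gamma}_k)$ as in \eqref{equ147}. The auxiliary part is bounded by Lemma~\ref{lemma2}:
\[
-\frac{\tilde{\alpha}_k^2(1+\sigma^2)}{2\tilde{\gamma}_k}\|\nabla f(x_{k+1})\|^2 .
\]
For the other part I use the $\gamma$-shift in \eqref{equ145} with $\mu=0$, which produces $-\frac{\tilde{\alpha}_k\tilde{\gamma}_{k+1}}{2}\|v_{k+1}-x^\star\|^2$. I then expand $\langle\nabla\mathcal{E}(z_{k+1};\tilde{\gamma}_k),z_{k+1}-z_k^+\rangle$ using $x_{k+1}-x_k^+=\tilde{\alpha}_k(v_k-x_{k+1})$ and the $v$-update. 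The result is
\[
-\tilde{\alpha}_k\langle\nabla f(x_{k+1}),x_{k+1}-x^\star\rangle+\tilde{\alpha}_k\langle g(x_{k+1}),v_k-v_{k+1}\rangle+\tilde{\alpha}_k\langle\nabla f(x_{k+1})-g(x_{k+1}),v_k-x^\star\rangle .
\]
The last term has zero conditional expectation, because $v_k$ and $x_{k+1}$ are $\mathcal F_{k+1}$-measurable. The middle term is handled by Young's inequality, with $\frac{\tilde{\gamma}_k}{2}\|v_k-v_{k+1}\|^2$ absorbed by $-D_{\mathcal E}$. The remaining $\frac{\tilde{\alpha}_k^2}{2\tilde{\gamma}_k}(1+\sigma^2)\|\nabla f(x_{k+1})\|^2$ is cancelled exactly by the Lemma~\ref{lemma2} descent. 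Convexity gives $-\tilde{\alpha}_k(f(x_{k+1})-f^\star)$, and since $\mathbb{E}f(x_{k+1}^+)\le f(x_{k+1})$ this is at most $-\tilde{\alpha}_k\mathbb{E}[f(x_{k+1}^+)-f^\star\mid\mathcal F_{k+1}]$. Altogether,
\[
\mathbb{E}\,\mathcal{E}(z_{k+1}^+;\tilde{\gamma}_{k+1})\le(1+\tilde{\alpha}_k)^{-1}\,\mathbb{E}\,\mathcal{E}(z_k^+;\tilde{\gamma}_k).
\]

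Finally I telescope. Since $1+\tilde{\alpha}_i=\frac{i+3+2m}{i+1+2m}$, the product over $i=0,\dots,k$ collapses to
\[
\prod_{i=0}^{k}\frac{1}{1+\tilde{\alpha}_i}=\frac{(1+2m)(2+2m)}{(k+2+2m)(k+3+2m)},
\]
which is the claimed bound.

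The main obstacle I expect is bookkeeping of measurability and of the mismatched stepsizes. One must verify that, in the original form \eqref{SHANG++}, $x_k^+$ uses $\tilde{\alpha}_k\beta_k$, so that $(x_{k+1}-x_k^+)/\tilde{\alpha}_k=v_k-x_{k+1}$ holds exactly. One must also confirm that with $\mu=0$ the extra cross term seen in Theorem~\ref{theorem2} does not appear. It should not, since the $v$-equation has no $x_{k+1}-v_{k+1}$ coupling once it is rescaled by $\tilde{\gamma}_k$.
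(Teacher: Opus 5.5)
Your proposal is correct and follows the paper's route: you rewrite SHANG++ with $\mu=0$ as SHANG in the variables $(\tilde{\alpha}_k,\tilde{\gamma}_k)$, use the one-step Lyapunov contraction $\mathbb{E}\,\mathcal{E}(z_{k+1}^+;\tilde{\gamma}_{k+1})\le(1+\tilde{\alpha}_k)^{-1}\mathbb{E}\,\mathcal{E}(z_k^+;\tilde{\gamma}_k)$ from the proof of Theorem~\ref{theorem1}, and telescope. Your explicit checks that $\tilde{\alpha}_k\beta_k=1/((1+\sigma^2)L)$ and that the shifted schedule satisfies \eqref{equ120:c} supply details the paper leaves implicit, since Theorem~\ref{theorem1} as stated uses the unshifted schedule $\alpha_k=2/(k+1)$.
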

		
		The system~\eqref{equ120} is the SHANG recursion with $(\alpha_k,\gamma_k)$ replaced by $(\tilde{\alpha}_k,\tilde{\gamma}_k)$. Applying Theorem~\ref{theorem1} gives
		$$
		\mathbb{E}\left[\mathcal{E}(z_{k+1}^+;\tilde{\gamma}_{k+1})\right]
		\le
		\prod_{i=0}^{k}\frac{1}{1+\tilde{\alpha}_i}\mathcal{E}_0.
		$$
		Since $\tilde{\alpha}_i=\frac{2}{i+1+2m}$,
		$$
		\prod_{i=0}^{k}\frac{1}{1+\tilde{\alpha}_i}
		=
		\prod_{i=0}^{k}\frac{i+1+2m}{i+3+2m}
		=
		\frac{(1+2m)(2+2m)}{(k+2+2m)(k+3+2m)}.
		$$
		This proves Theorem~\ref{theorem3}.
		
		\begin{corollary}\label{corollary3}
			Under the setting of Theorem~\ref{theorem2} or~\ref{theorem3}, $f(x_k^+) \overset{a.s.}{\rightarrow} f(x^{\star})$.
		\end{corollary}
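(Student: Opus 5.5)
We argue exactly as for Corollary~\ref{corollary1}, since Theorems~\ref{theorem2} and~\ref{theorem3} yield the same kind of bound. Write $e_k := f(x_k^+)-f(x^\star)\ge 0$. The Lyapunov function $\mathcal{E}(z_k^+;\cdot)$ is a sum of $e_k$ and a nonnegative quadratic term, so in both settings we get
\[
\mathbb{E}[e_{k}] \le \mathbb{E}\left[\mathcal{E}(z_k^+;\cdot)\right] \le r_k\,\mathcal{E}_0 .
\]
Here $r_k=(1-\tilde\alpha)^{k}$ under Theorem~\ref{theorem2}, and $r_k=\frac{(1+2m)(2+2m)}{(k+1+2m)(k+2+2m)}$ under Theorem~\ref{theorem3}, valid for $k\ge1$. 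For $k=0$ the bound is trivial. We assume $\mathcal{E}_0<\infty$; this is automatic when $x_0=v_0$ is deterministic, since $\mathbb{E}\|g(x_0)\|^2$ is finite by MNS and $f(x_0^+)$ is then controlled by smoothness.

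The key point is that $\sum_k r_k<\infty$ in both cases. The first is a geometric series with ratio $1-\tilde\alpha\in(0,1)$. The second is $O(k^{-2})$; in fact it telescopes, since $\frac{1}{(k+1+2m)(k+2+2m)}=\frac{1}{k+1+2m}-\frac{1}{k+2+2m}$.

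Fix $\varepsilon>0$. Since $e_k\ge0$, Markov's inequality gives $\mathbb{P}(e_k>\varepsilon)\le r_k\mathcal{E}_0/\varepsilon$. These probabilities are summable, so by the first Borel--Cantelli lemma the event $\{e_k>\varepsilon \text{ infinitely often}\}$ has probability zero. Take $\varepsilon=1/j$ and intersect over $j\in\mathbb{N}$, a countable intersection of probability-one events. This gives $\limsup_k e_k=0$ almost surely, that is, $f(x_k^+)\to f(x^\star)$ a.s.

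Alternatively, $\mathbb{E}\sum_k e_k=\sum_k\mathbb{E}e_k<\infty$ by monotone convergence. Hence $\sum_k e_k<\infty$ a.s., and so $e_k\to0$ a.s.

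The only delicate point is checking that the expectation bounds hold for every index. The theorems bound $\mathcal{E}(z_{k+1}^+)$ for $k\ge0$, which covers all $k\ge1$. A finite number of initial terms does not affect summability, so there is no genuine obstacle. The summability of $r_k$ is the essential ingredient, and it is where the quadratic (rather than merely linear) decay of Theorem~\ref{theorem3} is used.
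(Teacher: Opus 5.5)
Your proposal is correct and follows the same route as the paper: Markov's inequality applied to the nonnegative gap $f(x_k^+)-f(x^\star)$, summability of the geometric bound from Theorem~\ref{theorem2} or the $O(k^{-2})$ bound from Theorem~\ref{theorem3}, and the first Borel--Cantelli lemma. Your explicit countable intersection over $\varepsilon=1/j$ is a valid refinement of a step the paper handles through its $\varepsilon$-wise definition of almost sure convergence, and so is your alternative argument via $\mathbb{E}\sum_k \bigl(f(x_k^+)-f(x^\star)\bigr)<\infty$.
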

		
		We next compare the stepsize-dependent scaling induced by the two methods in the convex setting:
		$$
		\text{(SHANG)}\quad \frac{\gamma_k}{1+\sigma^2}=\alpha_k^2L_\sigma,
		\qquad
		\text{(SHANG++)}\quad \frac{\gamma_k}{1+\sigma^2}=\alpha_k^2\cdot \frac{L_\sigma}{1+m\alpha_k}.
		$$
		Thus SHANG++ replaces $L_\sigma$ by $L_\sigma/(1+m\alpha_k)$ in the scaling, which partly offsets the $\sigma^2$-dependent amplification in the admissible stepsize. The effect of $m$ is not monotone: larger $m$ relaxes the noise-dependent stepsize restriction but may weaken the convex rate. This gives a trade-off between speed and robustness. In our experiments, $m=1$ and $m=1.5$ work well.
		
		\paragraph{Batching}
		Gradient noise can be reduced by increasing the mini-batch size $M$ in~\eqref{equsg}. If $\sigma_1^2$ is the MNS constant for $M=1$, then $\sigma_M^2 \le \sigma_1^2/M$. Another option is to average $K$ independent gradient estimators,
		$$
		g^K = \frac{1}{K} \sum_{i=1}^K g_i,
		$$
		which gives an effective MNS constant $\sigma^2/K$. Both approaches reduce noise at the cost of higher computation.
		
		\subsection{SNAG as a Discretization of the HNAG Flow}\label{sectionSNAG}
		Under MNS, one recent first-order stochastic method designed to overcome the divergence of NAG and accelerate SGD is the Stochastic Nesterov Accelerated Gradient (SNAG) method~\cite{Nesterov2013,HermantEtAl2025}. Its iteration is
		\begin{equation}\label{SNAG}
			\begin{aligned}
				x_{k+1}  &= \hat{\alpha}_{k+1} x_k + (1-\hat{\alpha}_{k+1})v_{k+1} - \hat{\alpha}_{k+1} s\, g(x_k), \\
				v_{k+1} & = \hat{\beta} v_k + (1-\hat{\beta}) x_k - \eta_k g(x_k),
			\end{aligned}
		\end{equation}
		where $g(x_k)$ is a stochastic gradient estimator, and $\hat{\alpha}_{k+1}$, $s$, $\hat{\beta}$, and $\eta_k$ are parameters.
		
		Set $\hat{\alpha}_{k+1} = \frac{1}{1+\alpha_{k+1}}$, $s = \alpha_{k+1}\beta_{k+1}$, $\hat{\beta} = \frac{1}{1+\frac{\alpha_{k+1}\mu}{\gamma_{k+1}}}$, and $\eta_k = \frac{1}{1+\frac{\alpha_{k+1}\mu}{\gamma_{k+1}}}\frac{\alpha_{k+1}}{\gamma_{k+1}}$. Then the SNAG scheme~\eqref{SNAG} is equivalent to
		\begin{equation}\label{newSNAG}
			\begin{aligned}
				\frac{x_{k+1} - x_k}{\alpha_{k+1}} & = v_{k+1} - x_{k+1} - \beta_{k+1} g(x_k), \\
				\frac{v_{k+1} - v_k}{\alpha_{k+1}} & = \frac{\mu}{\gamma_{k+1}}(x_k - v_{k+1}) - \frac{1}{\gamma_{k+1}} g(x_k), \\
				\frac{\gamma_{k+1} - \gamma_k}{\alpha_{k+1}} & \le \mu - \gamma_{k+1}.
			\end{aligned}
		\end{equation}
		Hence, SNAG can be interpreted as another discretization of the HNAG flow with more terms treated explicitly.
		
		A direct calculation shows that the parameter rules proposed for SNAG in~\cite{HermantEtAl2025} are compatible with those arising from HNAG discretizations. In our experiments, SNAG remains comparatively stable as the noise level grows, which is consistent with this HNAG-based interpretation.
		
		\section{Numerical Experiments}
		\label{test}
		The algorithm and convergence analysis in the previous section assume a convex objective. In modern deep learning, the training loss is typically nonconvex, so we adapt SHANG++ to a practical optimizer rather than a method with guaranteed global rates.
		
		\begin{algorithm}[H]
			\SetAlgoNoLine
			\SetNlSty{}{}{}
			\DontPrintSemicolon
			\caption{SHANG++ for Deep Learning}
			\label{algorithm1}
			\KwIn{Objective function $f$, initial point $x_0$, step size $\alpha$, time-scaling factor $\gamma$, noise damping $m$, iteration horizon $T$.}
			$k \gets 1$, $v_0 \gets x_0$, $x_1 \gets x_0$, $\tilde{\alpha} \gets \frac{\alpha}{1+m\alpha}$\;
			\While{$k \le T$}{
				Sample an unbiased stochastic gradient $g_k$ at $x_k$\;
				$v_k \gets v_{k-1} - \frac{\alpha}{\gamma} g_k$\;
				$x_{k+1} \gets \frac{1}{1 + \tilde{\alpha}}x_k + \frac{\tilde{\alpha}}{1 + \tilde{\alpha}}v_k - \frac{\tilde{\alpha}}{1 + \tilde{\alpha}} \frac{\alpha}{\gamma} g_k$\;
				$k \gets k + 1$\;
			} 
			\Return{$x_{T+1}$}
		\end{algorithm}
		
		For deep-learning experiments, we run SHANG++ with three explicit hyperparameters $(\alpha,\gamma,m)$ and set $\mu=0$ with the coupling $\beta=\alpha/\gamma$; see Algorithm~\ref{algorithm1}. The practical implementation uses an index-shifted form of~\eqref{SHANG++}. Instead of evaluating stochastic gradients at both $x_k$ and $x_{k+1}$, we first update $v_k$ using the stochastic gradient sampled at $x_k$, and then use the same gradient to compute $x_{k+1}$. Thus Algorithm~\ref{algorithm1} requires only one stochastic gradient evaluation per iteration. Apart from this gradient evaluation, the extra operations are vector additions and scalar-vector multiplications, hence $O(d)$ for $d$ parameters. No Hessian evaluation, Hessian-vector product, or linear solve is required. When $m=0$, Algorithm~\ref{algorithm1} reduces to SHANG, so the same per-iteration cost comparison applies to SHANG.
		
		Fixing $\beta=\alpha/\gamma$ reduces tuning to $(\alpha,\gamma,m)$ and works reliably across tasks. The convex theory suggests $\beta=(1+\sigma^2)\alpha/\gamma$, but the effective noise level $\sigma^2$ is difficult to estimate in practice and can vary over time. We therefore avoid explicit noise calibration.
		
		Throughout this section, NAG denotes the stochastic variant of Nesterov's accelerated gradient~\cite{Nesterov1983}, with $\nabla f(x)$ replaced by $g(x)$. SNAG refers to the method of~\cite{HermantEtAl2025}. SHB denotes the stochastic heavy-ball method, i.e., SGD with momentum.
		
		\begin{figure}[!htbp]
			\centering
			\begin{subfigure}{0.32\textwidth}
				\centering
				\includegraphics[width=\linewidth]{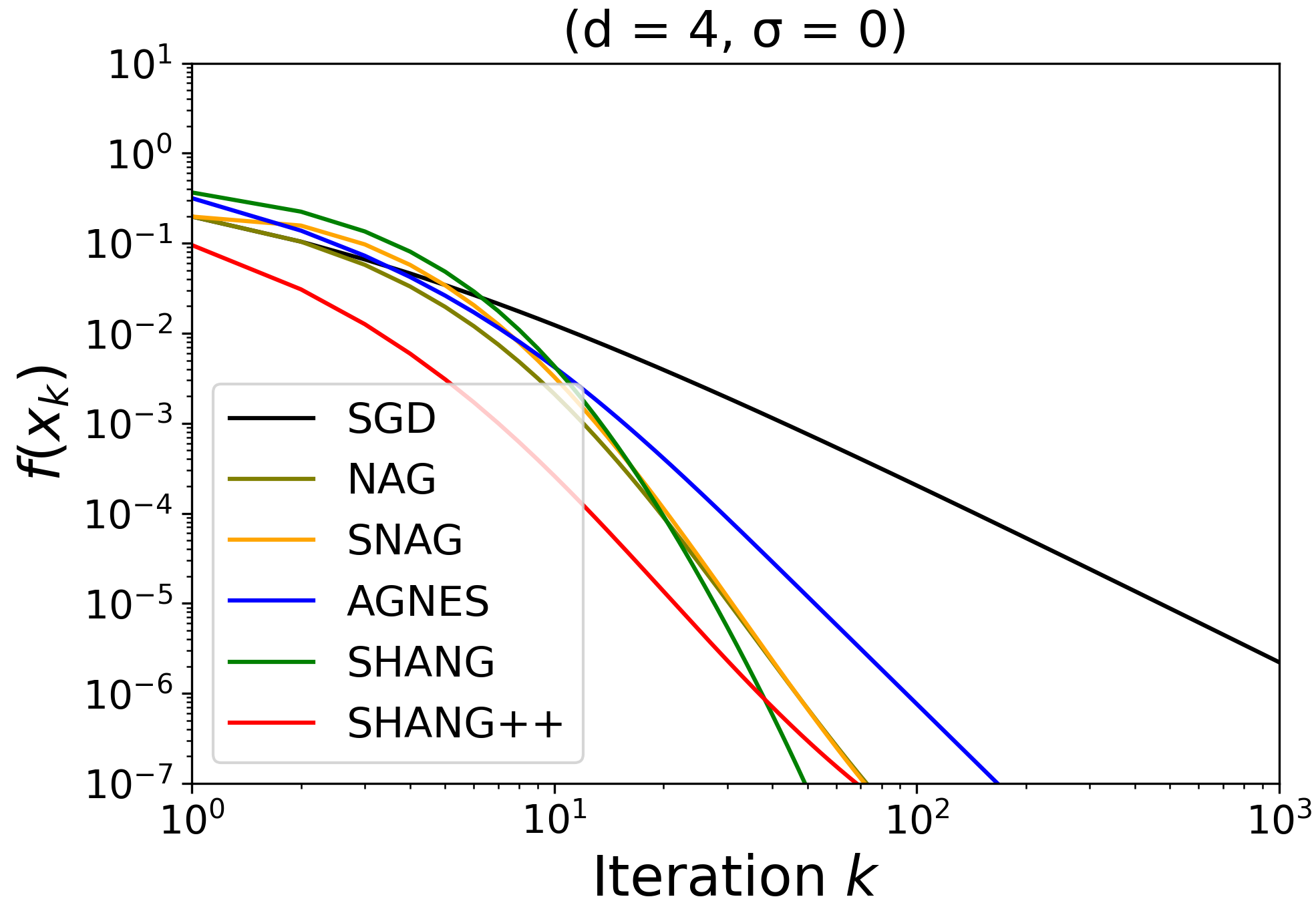}
			\end{subfigure}\hfill
			\begin{subfigure}{0.32\textwidth}
				\centering
				\includegraphics[width=\linewidth]{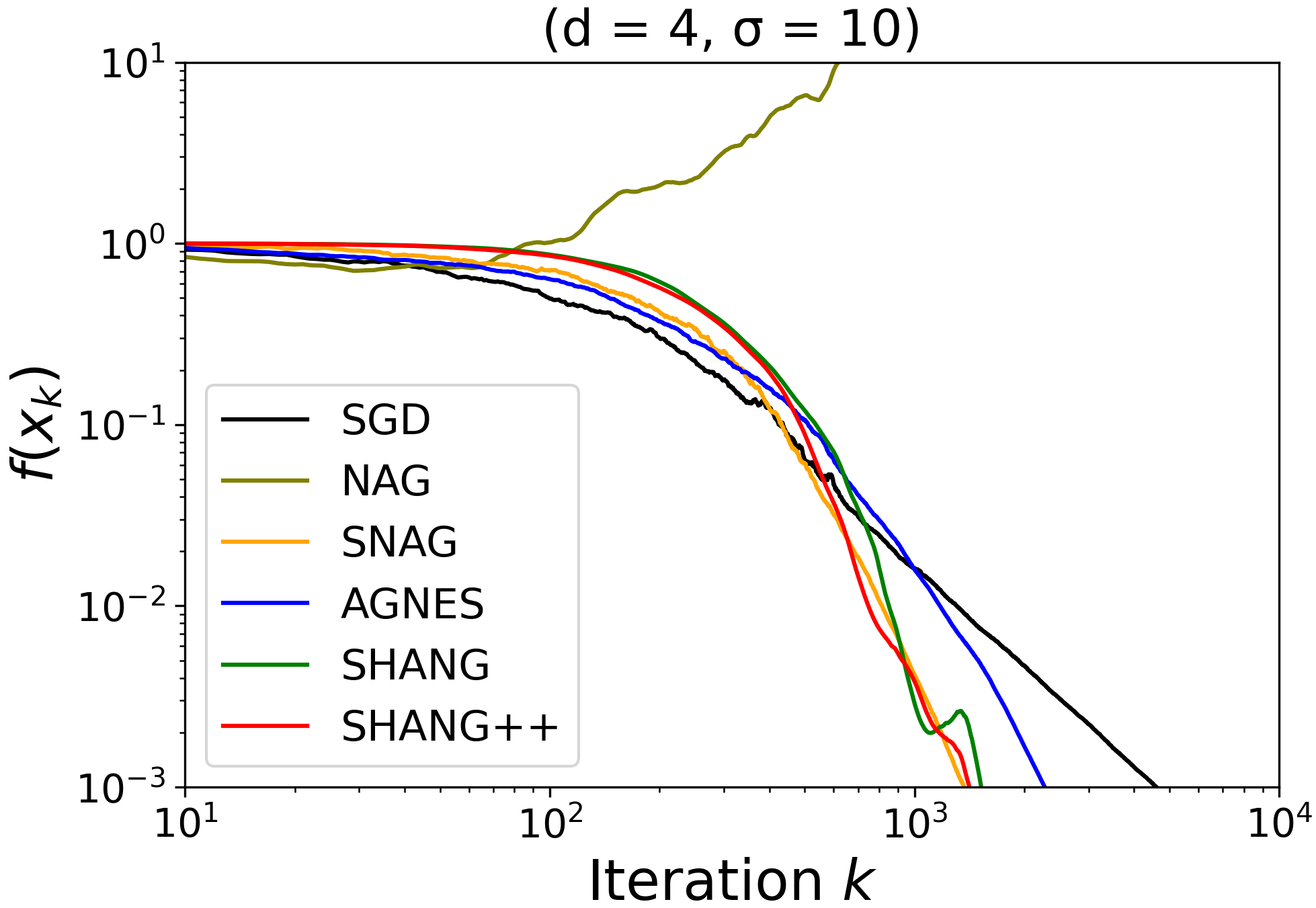}
			\end{subfigure}\hfill
			\begin{subfigure}{0.32\textwidth}
				\centering
				\includegraphics[width=\linewidth]{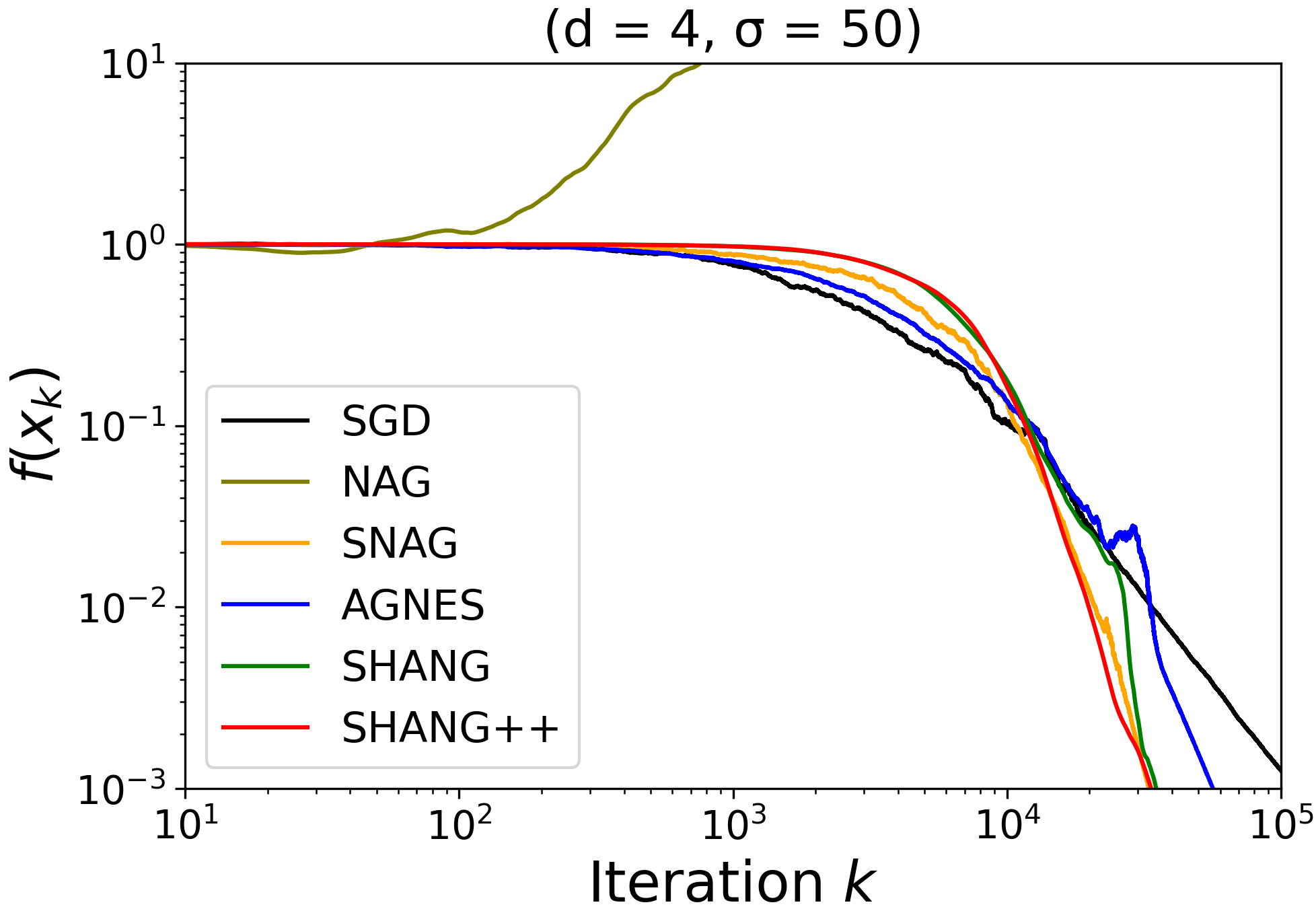}
			\end{subfigure}
			\vspace{0.5em}
			\begin{subfigure}{0.32\textwidth}
				\centering
				\includegraphics[width=\linewidth]{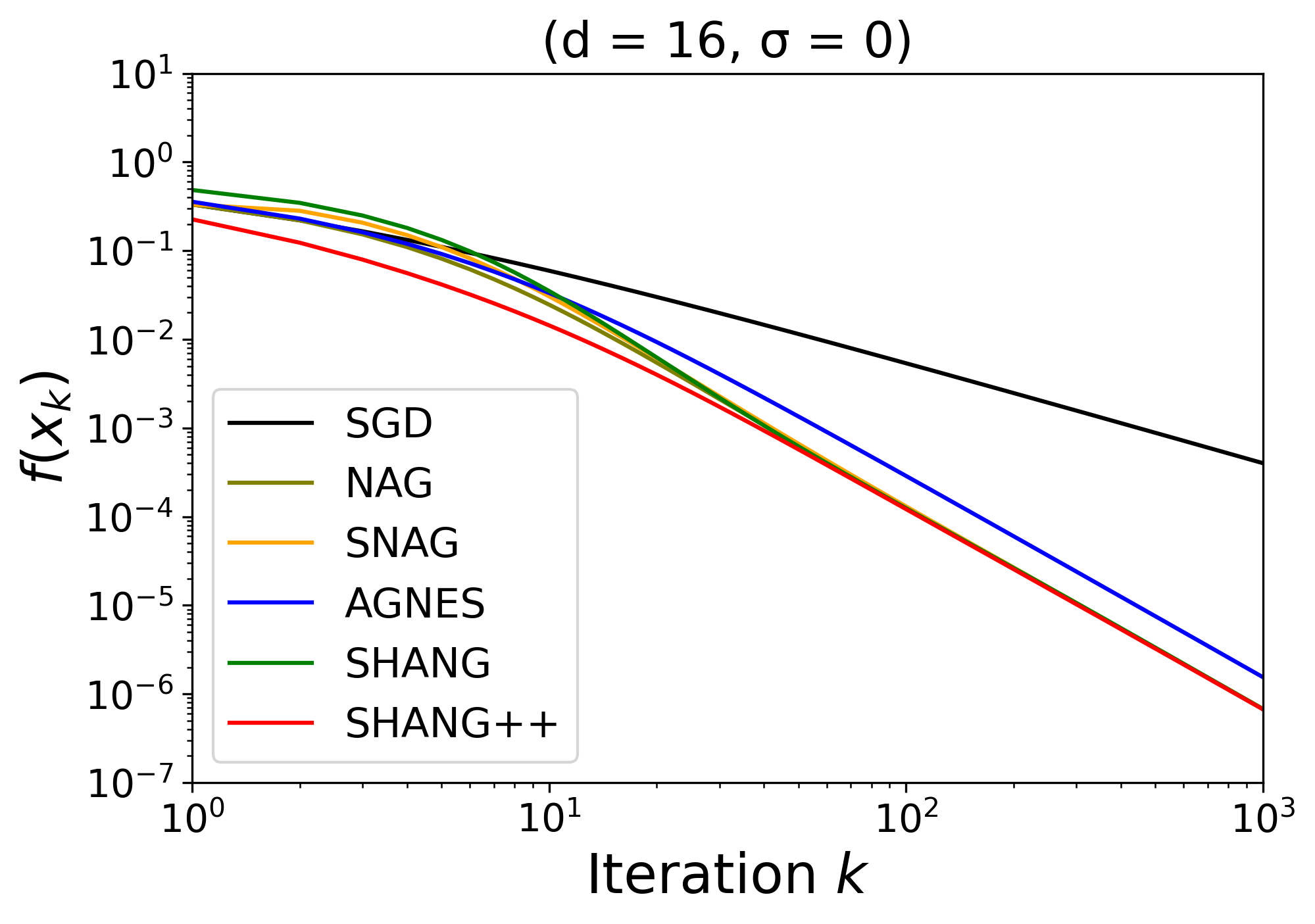}
			\end{subfigure}\hfill
			\begin{subfigure}{0.32\textwidth}
				\centering
				\includegraphics[width=\linewidth]{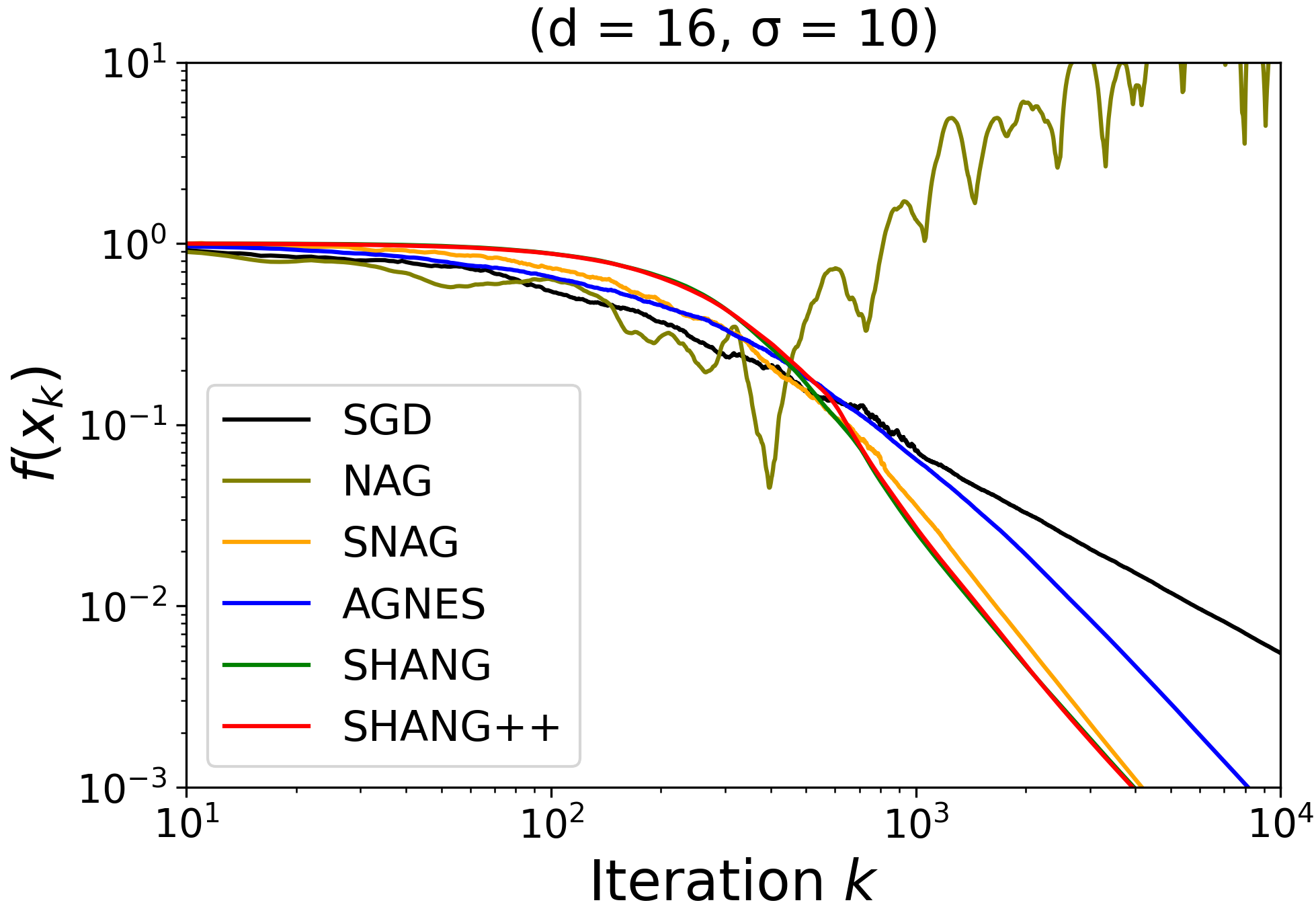}
			\end{subfigure}\hfill
			\begin{subfigure}{0.32\textwidth}
				\centering
				\includegraphics[width=\linewidth]{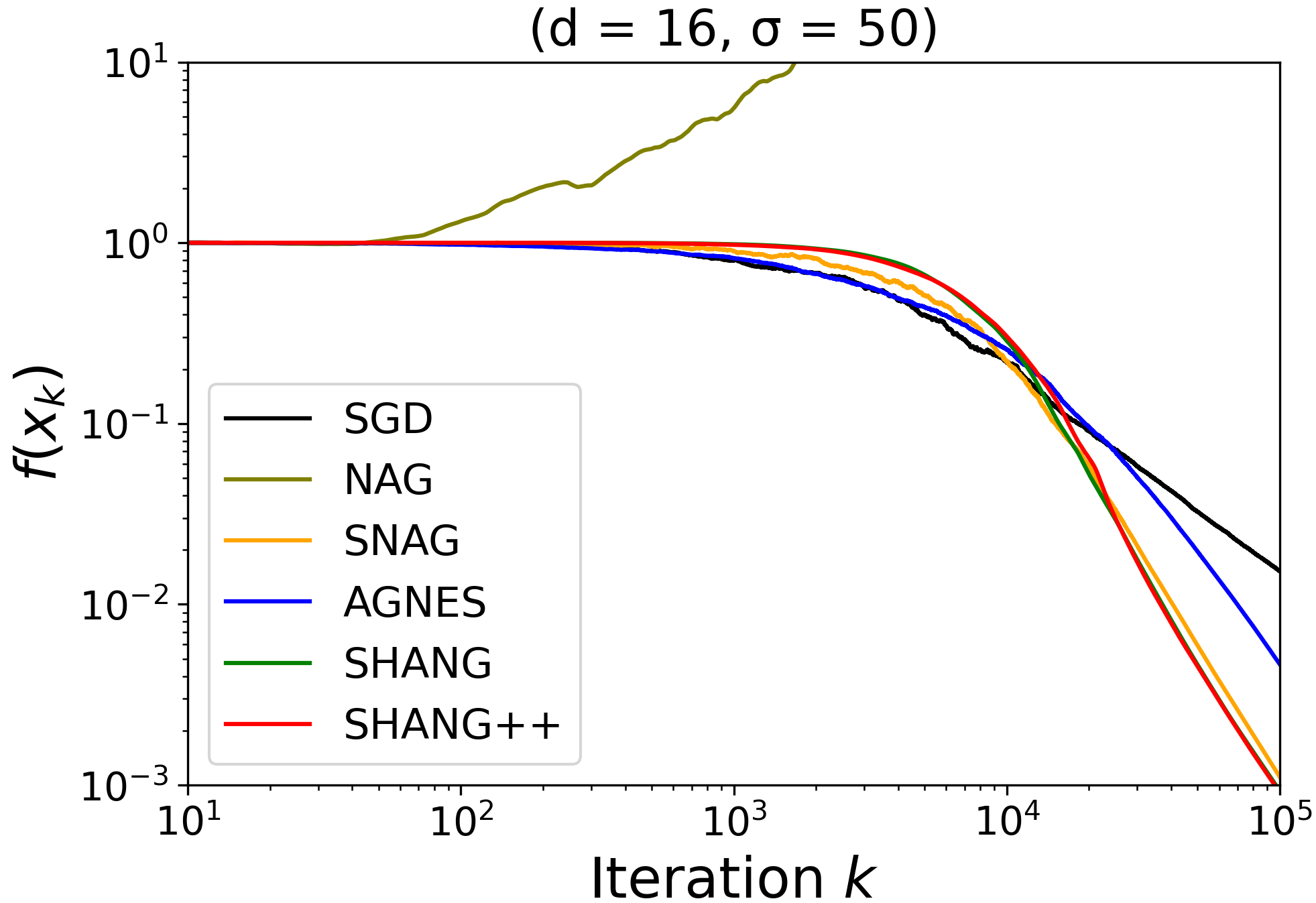}
			\end{subfigure}
			\caption{Performance of different algorithms under varying noise levels.}
			\label{fig:noise}
		\end{figure}
		
		\subsection{Convex optimization}\label{convextest}
		We first consider the family of objective functions from~\cite{GuptaSiegelWojtowytsch2024}:
		$$
		f_d : \mathbb{R} \to \mathbb{R}, \qquad 
		f_d(x) =
		\begin{cases} 
			|x|^d, & |x| < 1, \\[4pt]
			1+d(|x|-1), & \text{otherwise},
		\end{cases}
		$$
		for $d \ge 2$, with gradient estimators $g(x) = (1+ \sigma Z)\nabla f(x)$, where $Z \sim \mathcal{N}(0,1)$. The functions $f_d$ belong to $\mathcal{S}_{0,L}$ with $L = d(d-1)$.
		
		We compare SHANG and SHANG++ with SGD, NAG, AGNES~\cite{GuptaSiegelWojtowytsch2024}, and SNAG~\cite{HermantEtAl2025} under $\sigma \in \{0, 10, 50\}$ and $d \in \{4, 16\}$. The larger value $\sigma=50$ tests robustness in a high-noise regime. The parameters follow their optimal choices for the convex case. All simulations start from $x_0 =1$, and expectations are averaged over $200$ independent runs.
		
		In Figure~\ref{fig:noise}, both SHANG and SHANG++ remain stable as $\sigma$ increases, whereas NAG diverges under large noise. SHANG is very competitive, and SHANG++ is consistently slightly better than the other accelerated stochastic schemes. These results suggest that the proposed methods are robust to noisy gradients with modest tuning, while retaining accelerated-like behavior in the high-noise regime.
		
		\subsection{Classification Tasks on MNIST, CIFAR-10 and CIFAR-100}\label{classification}
		We benchmark three training tasks: LeNet-5 on MNIST~\cite{LeCunBottouBengioHaffner1998}, ResNet-34~\cite{HeZhangRenSun2016} on CIFAR-10~\cite{Krizhevsky2009}, and ResNet-50 on CIFAR-100 with standard data augmentation, including normalization, random crop, and random flip. The classification experiments test two aspects: performance across different tasks at fixed batch size, and the effect of batch size on stochastic-gradient noise. We first report MNIST, CIFAR-10, and CIFAR-100 results with batch size $50$, and then use CIFAR-10 for the batch-size ablation.
		
		\begin{figure}[!htbp]
			\centering
			\begin{subfigure}{0.45\textwidth}
				\centering
				\includegraphics[width=\linewidth]{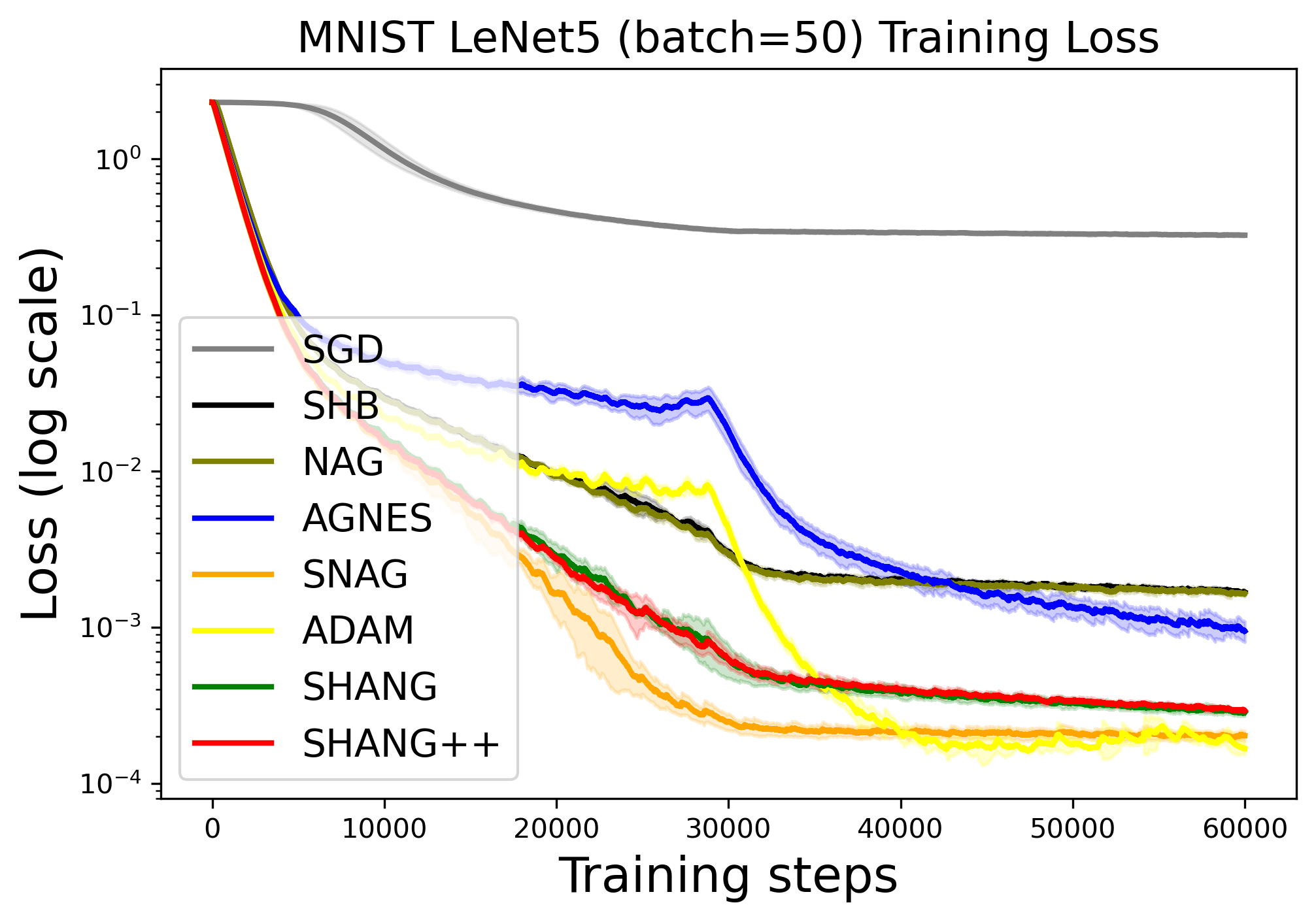}
			\end{subfigure}\hfill
			\begin{subfigure}{0.45\textwidth}
				\centering
				\includegraphics[width=\linewidth]{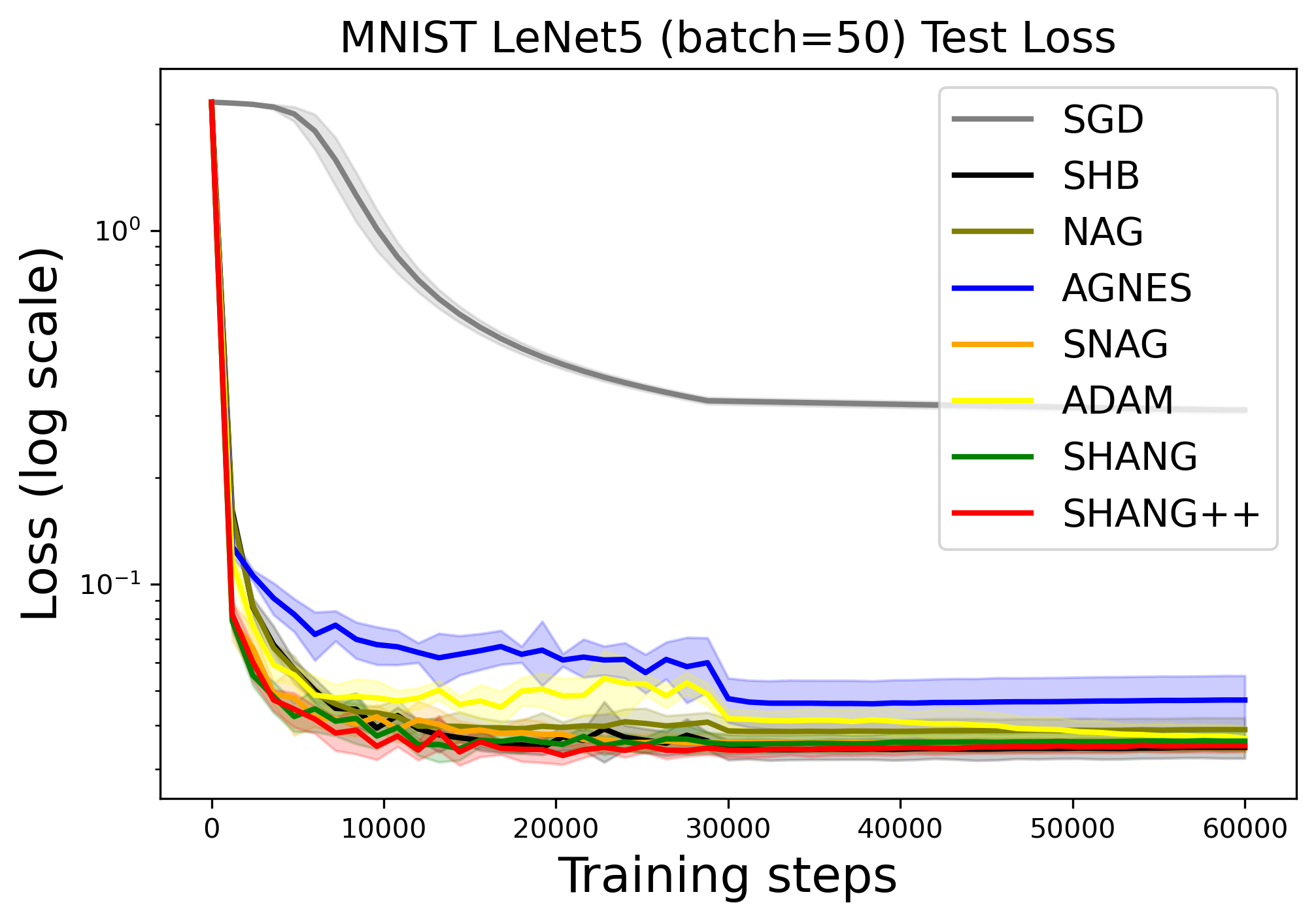}
			\end{subfigure}\hfill
			\caption{\small Training and test loss (log scale, running average with decay $0.99$) on MNIST with LeNet-5 (batch size $50$).}
			\label{MNIST-LeNet5}
		\end{figure}
		
		For each dataset, we run all algorithms for $50$ epochs and report averages over five trials. Following the AGNES experimental protocol in~\cite{GuptaSiegelWojtowytsch2024}, we use one mid-training schedule change at epoch $25$. After $25$ epochs, the learning rates for all baseline methods, except SHANG and SHANG++, are reduced by a factor of $0.1$; the AGNES correction step size is reduced similarly. For our methods, the time-scaling factor $\gamma$ is doubled after $25$ epochs. SHANG and SHANG++ do not use an explicit learning rate; their effective learning rate is controlled by $\gamma$, with effective learning rate factor $1/\gamma$; see Algorithm~\ref{algorithm1}. Thus increasing $\gamma$ after $25$ epochs gives a comparable mid-training reduction of the effective step size.
		
		\begin{figure}[!htbp]
			\centering
			\begin{subfigure}{0.45\textwidth}
				\centering
				\includegraphics[width=\linewidth]{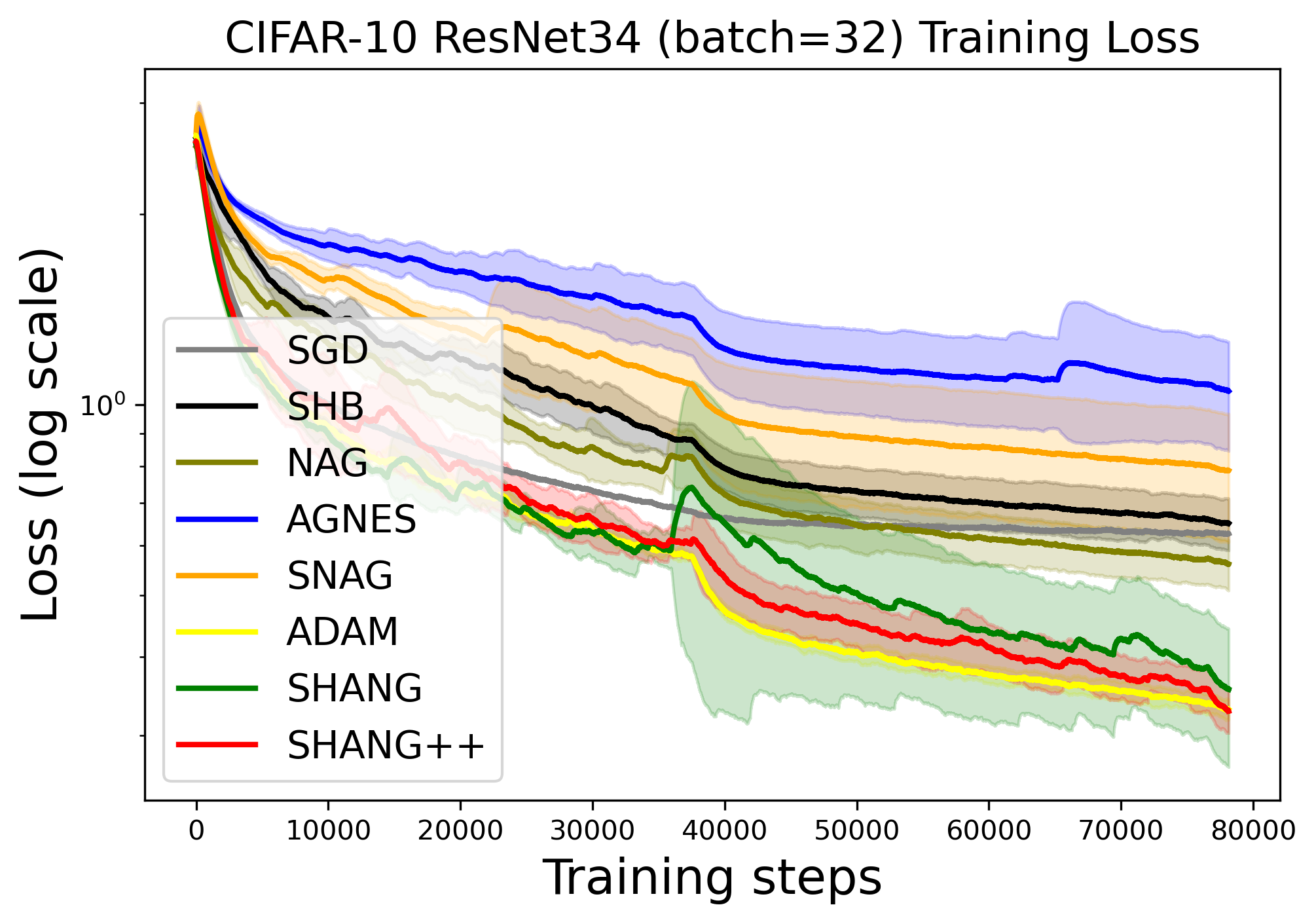}
			\end{subfigure}\hfill
			\begin{subfigure}{0.45\textwidth}
				\centering
				\includegraphics[width=\linewidth]{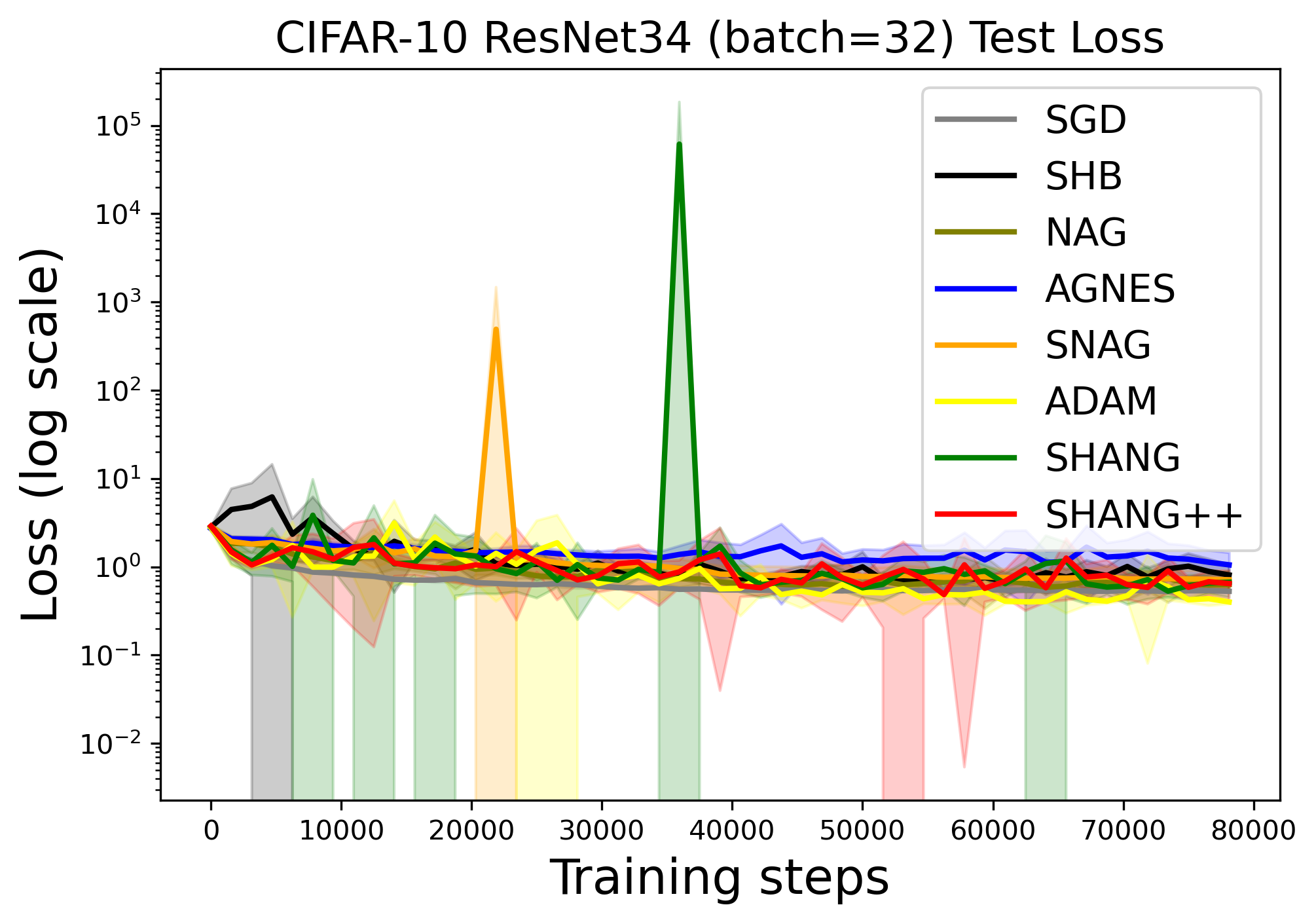}
			\end{subfigure}\hfill
			\begin{subfigure}{0.45\textwidth}
				\centering
				\includegraphics[width=\linewidth]{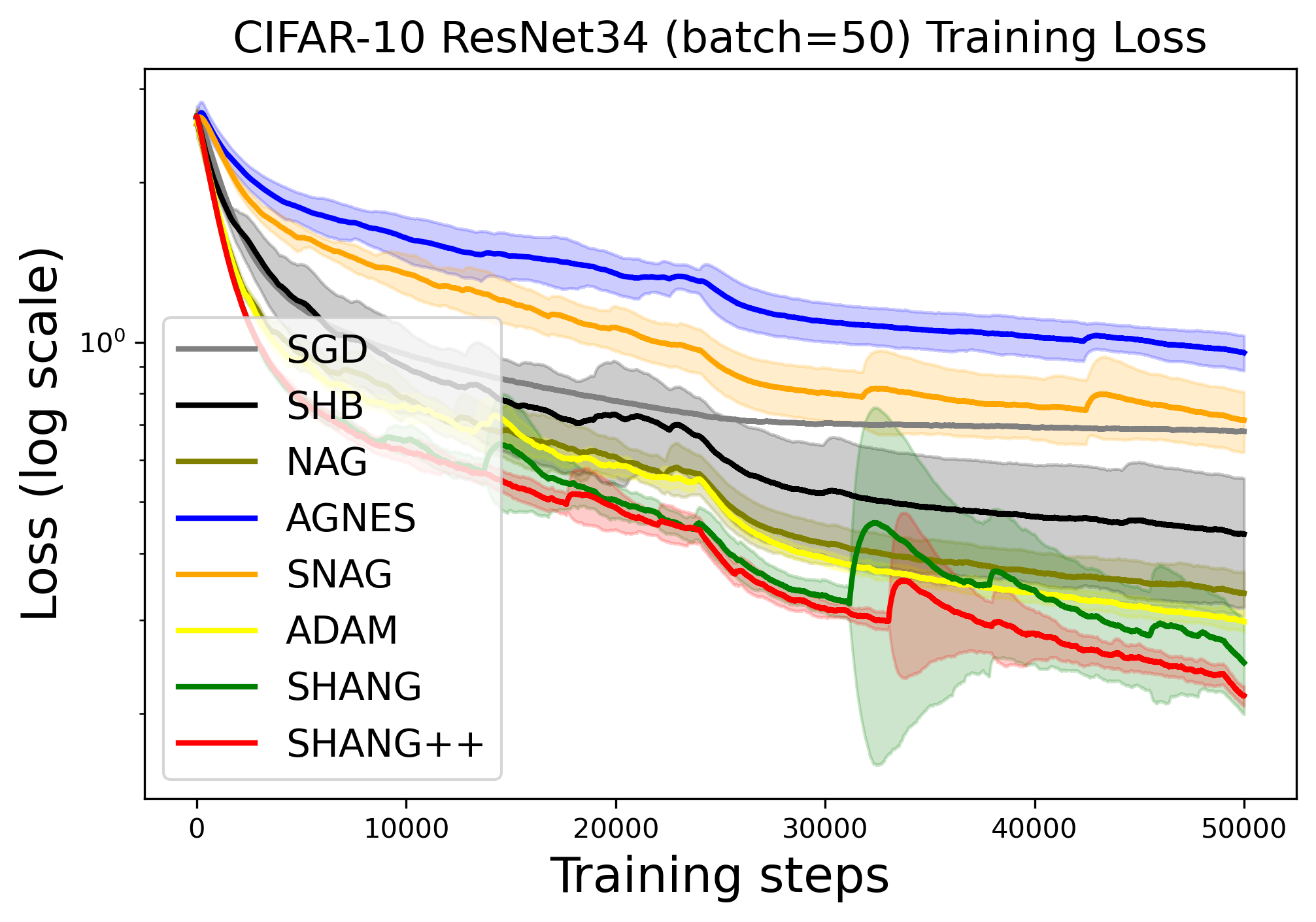}
			\end{subfigure}\hfill
			\begin{subfigure}{0.45\textwidth}
				\centering
				\includegraphics[width=\linewidth]{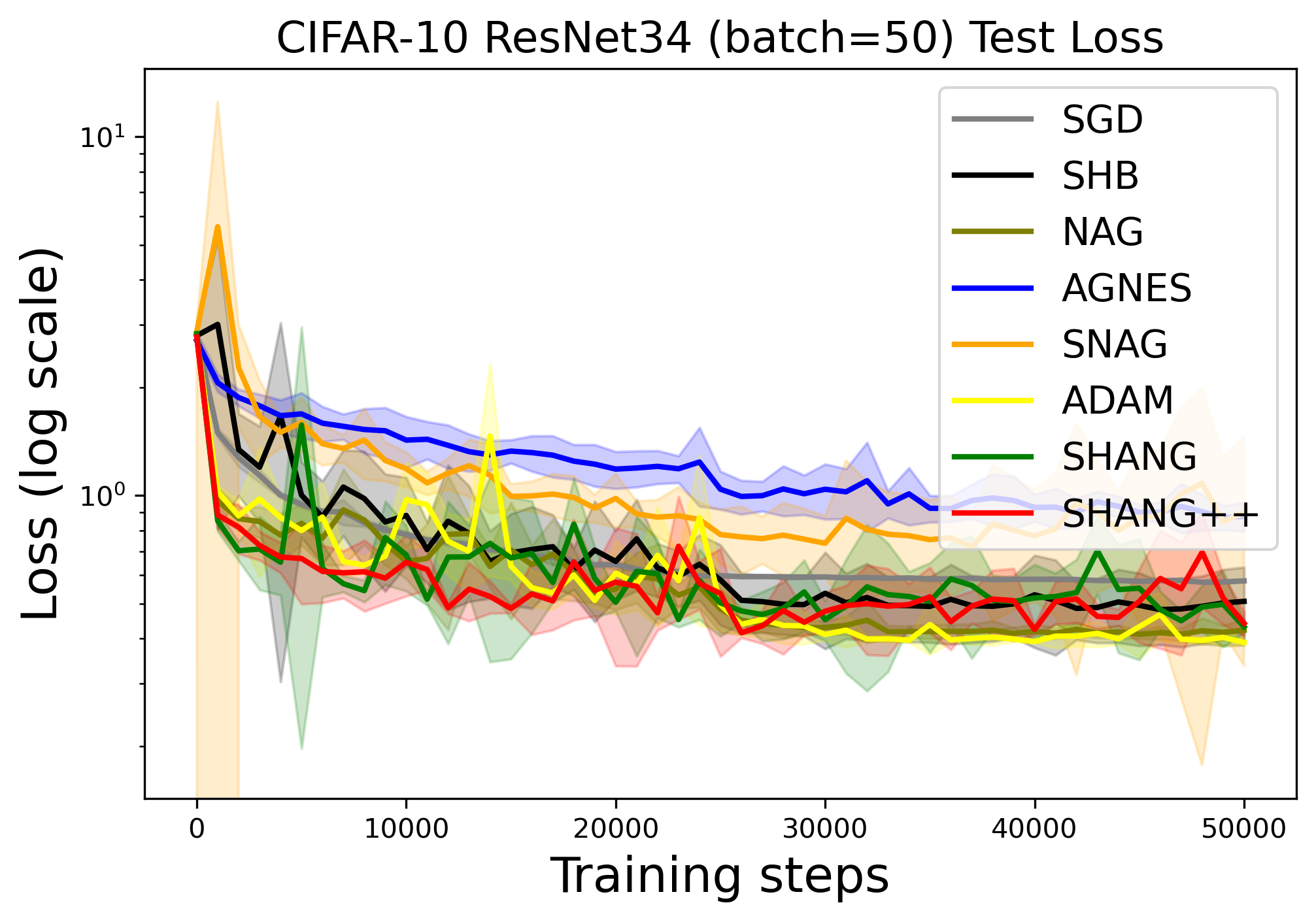}
			\end{subfigure}\hfill
			\begin{subfigure}{0.45\textwidth}
				\centering
				\includegraphics[width=\linewidth]{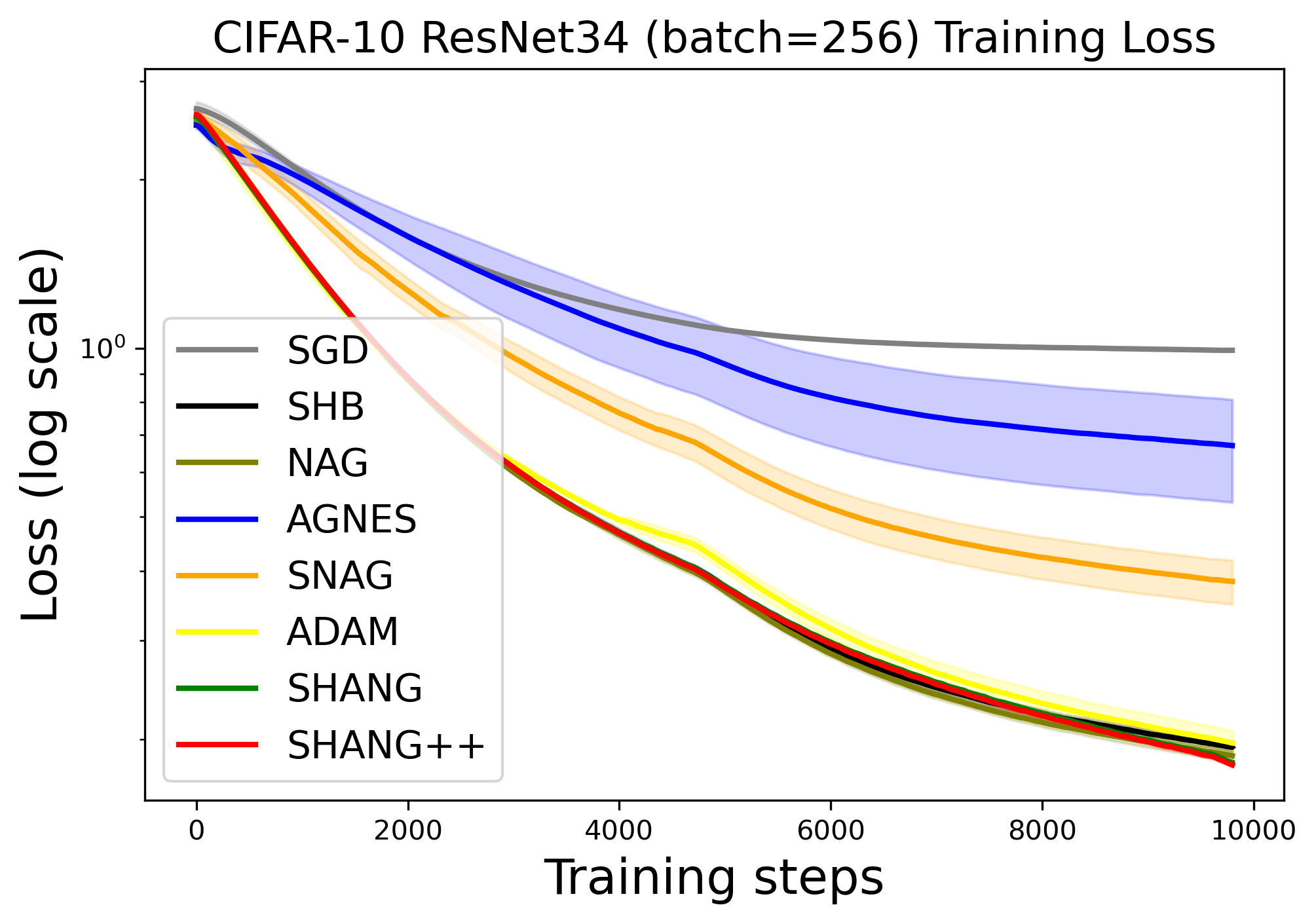}
			\end{subfigure}\hfill
			\begin{subfigure}{0.45\textwidth}
				\centering
				\includegraphics[width=\linewidth]{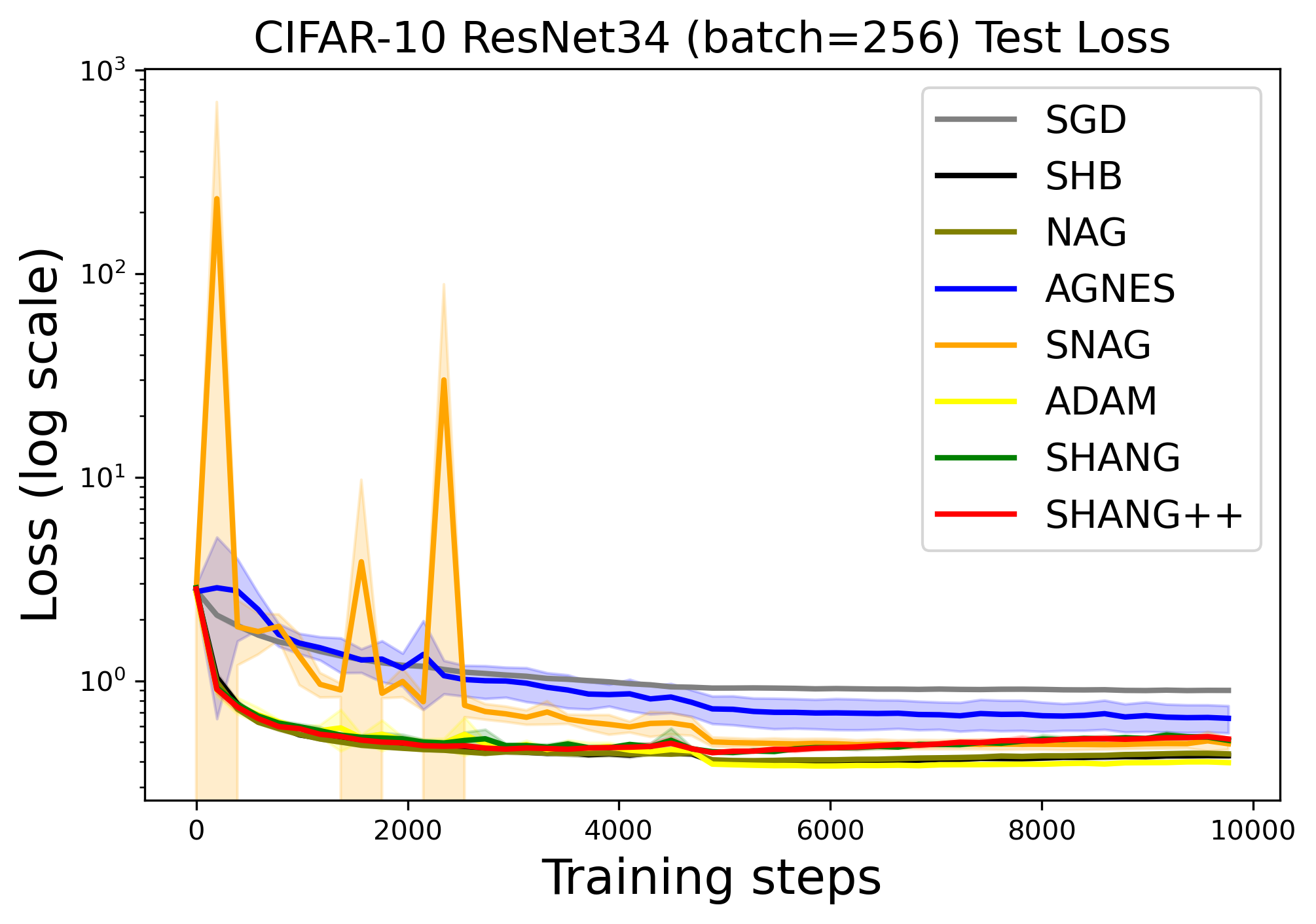}
			\end{subfigure}
			\caption{\small Training loss (left) and test loss (right) in log scale (running average with decay $0.99$) on CIFAR-10 with ResNet-34, for batch sizes $32$ (top row), $50$ (middle row), and $256$ (bottom row).}	
			\label{ResNet} 
		\end{figure}
		
		For hyperparameter selection, our two methods were evaluated under three settings: $\alpha = 0.5$ with $\gamma \in \{1, 1.5, 2\}$ for LeNet-5, $\gamma \in \{5, 10\}$ for ResNet-34, and $\gamma \in \{10, 15\}$ for ResNet-50. For SHANG++, we fixed $m = 1.5$. AGNES used the default parameters recommended by its authors, $(\eta, \alpha, m) = (0.01, 0.001, 0.99)$, which have shown strong performance across tasks. For SNAG, we used the two-parameter variant $(\eta,\beta)$ proposed by the original authors for machine-learning tasks. Hyperparameters were selected by grid search over $\eta \in \{0.5, 0.1, 0.05, 0.01, 0.005, 0.001\}$ and $\beta \in \{0.7, 0.8, 0.9, 0.99\}$. The best choice was $(\eta,\beta)=(0.05,0.9)$, which matches the recommended choice for training CNNs on CIFAR. For SGD, SHB, and NAG, we used a fixed learning rate $\eta=0.001$; for SHB and NAG, the momentum coefficient was $0.99$.
		
		For Adam~\cite{KingmaBa2015}, we used the default parameters $(\eta,\beta_1,\beta_2)=(0.001,0.9,0.999)$. Adam is included as a strong adaptive reference baseline. It uses coordinatewise adaptive learning rates based on first- and second-moment estimates, whereas the other methods are non-adaptive in this coordinatewise sense.
		
		Figures~\ref{MNIST-LeNet5}, \ref{ResNet}, and \ref{CIFAR100} report the results of SHANG and SHANG++ with $\alpha=0.5$, $m=1.5$, and $\gamma=2,5,10$, respectively. Figure~\ref{ResNet} shows the training and test losses of ResNet-34 on CIFAR-10 under different batch sizes, with all algorithmic hyperparameters fixed across batch sizes. Batch size strongly affects gradient variance: smaller batches increase noise, while larger batches reduce it. At batch size $256$, all methods are stable and the gaps narrow. At batch size $50$, NAG, SNAG, and AGNES oscillate with wider bands, and AGNES also plateaus higher. At batch size $32$, the differences among methods become clear.
		
		\begin{figure}[!htbp]
			\centering
			\begin{subfigure}{0.45\textwidth}
				\centering
				\includegraphics[width=\linewidth]{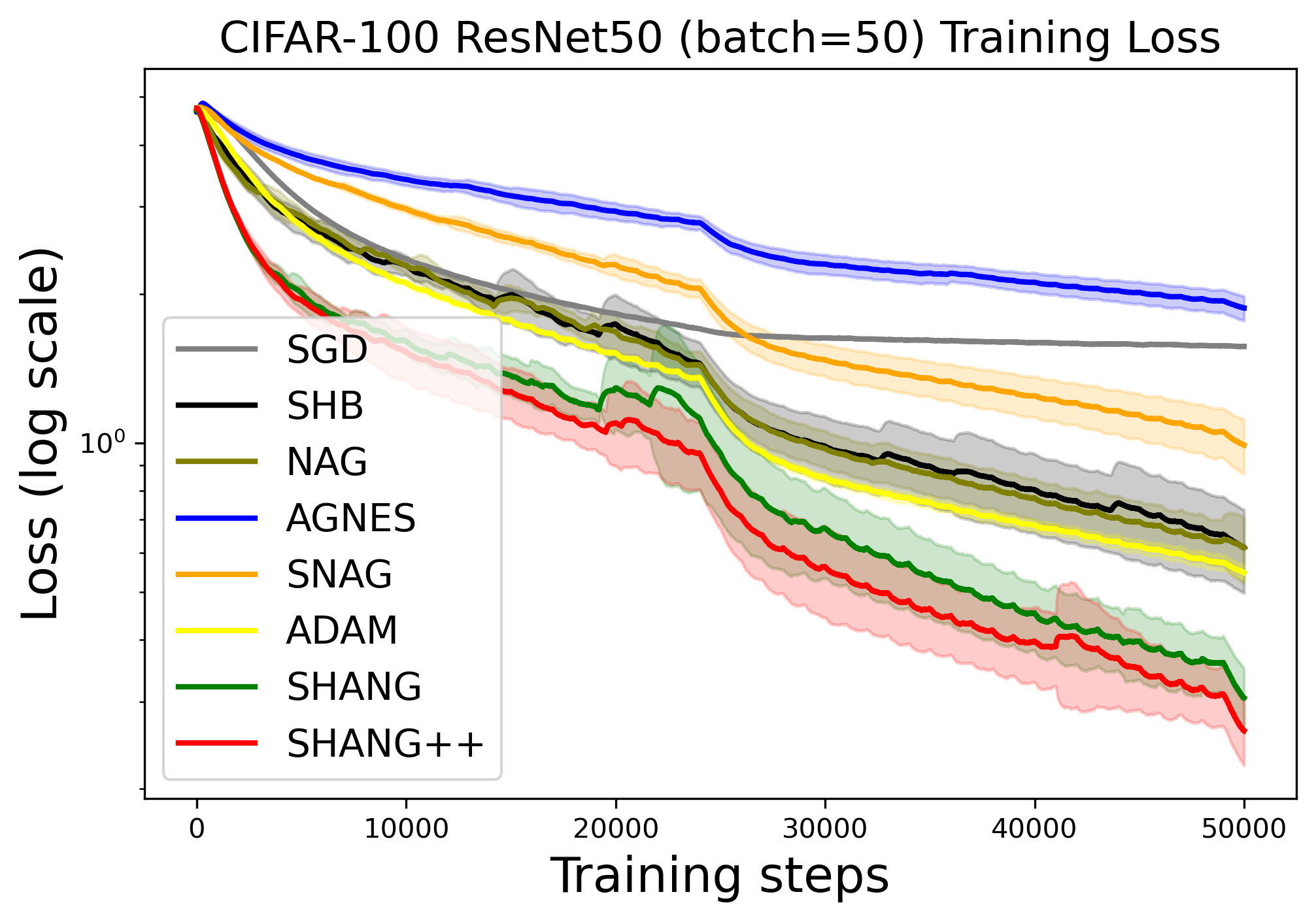}
			\end{subfigure}\hfill
			\begin{subfigure}{0.45\textwidth}
				\centering
				\includegraphics[width=\linewidth]{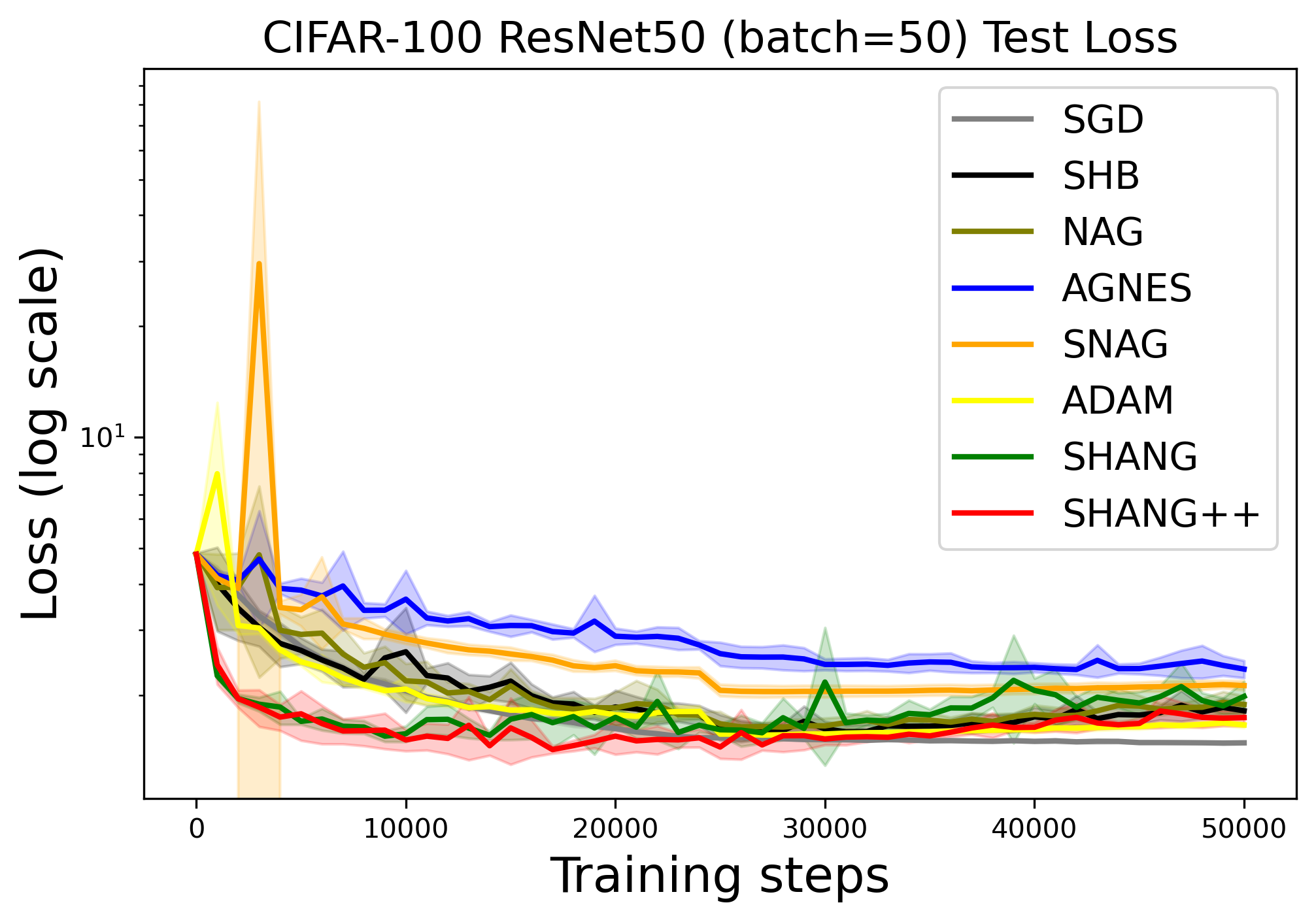}
			\end{subfigure}\hfill
			\caption{\small Training and test loss (log scale, running average with decay $0.99$) on CIFAR-100 with ResNet-50 (batch size $50$).}
			\label{CIFAR100}
		\end{figure}
		
		In these small-batch regimes, SHANG and SHANG++ consistently outperform the other first-order stochastic momentum methods. When the batch size falls below $50$, AGNES and SNAG lose their acceleration advantage over SGD, whereas SHANG, SHANG++, and Adam still give clear improvements. As also observed in~\cite{HermantEtAl2025}, non-variance-reduced accelerated methods often lose acceleration at very small batch sizes. SHANG and SHANG++ appear to remain robust at smaller batch-size thresholds.
		
		\begin{table}[htbp]       
			\centering            
			\caption{Test accuracy of SGD, SHB, NAG, Adam, AGNES, SNAG, SHANG, and SHANG++ on MNIST (LeNet-5), CIFAR-10 (ResNet-34), and CIFAR-100 (ResNet-50). Here $b$ is batch size. Larger values are better. Best is in bold and second best is underlined.}
			\label{test accuracy}      
			\renewcommand{\arraystretch}{1.15}	
			\resizebox{0.92\textwidth}{!}{    
				{\large 
					\begin{tabular}{ccccccccc}     
						\toprule             
						& SGD & SHB  & NAG & Adam & AGNES & SNAG & SHANG & SHANG++   \\
						\midrule
						LeNet-5 
						& $91.07$ & $98.98$  &$98.9$& $\underline{99.07}$ & $98.88$ & $\underline{99.07}$ & $99.06$ & $\mathbf{99.11}$  \\
						$(b=50)$ 
						& $\pm 0.11$&$\pm0.05$  & $\pm 0.08$  & $\pm 0.07$ & $\pm 0.09$  & $\pm 0.08$ & $\pm 0.02$ & $\pm 0.03$  \\
						ResNet-34 
						& $81.74$ & $78.9$ & $81.28$ & $\mathbf{86.99}$ & $67.45$  & $75.58$ & $84.5$ & $\underline{85.36}$ \\
						$(b=32)$ 
						& $\pm 0.38$ & $\pm 1.67$ & $\pm 1.58$& $\pm 0.14$  & $\pm 7.7$  & $\pm 6.02$ & $\pm 2$  & $\pm 1.42$  \\
						ResNet-34 
						& $79.91$ & $84.59$ & $86.43$ & $\underline{87.38}$ & $70.49$ & $77.65$ & $87.15$ & $\mathbf{87.4}$ \\
						$(b=50)$ 
						& $\pm 0.11$ & $\pm 2.62$ & $\pm 0.81$ & $\pm 0.26$ & $\pm 2.51$  & $\pm 2.7$ & $\pm 0.82$ & $\pm 0.5$ \\
						ResNet-34 
						& $68.49$ & $87.6$ & $\underline{87.61}$ & $\mathbf{88.23}$ & $77.84$ & $84.5$ & $86.67$ & $86.57$ \\
						$(b=256)$
						& $\pm 0.19$ & $\pm 0.27$ & $\pm 0.29$ & $\pm 0.11$ & $\pm 3.7$  & $\pm 0.92$ & $\pm 0.13$ & $\pm 0.17$ \\
						ResNet-50 
						& $58.31$ & $58.17$ & $57.66$ & $59.87$ & $42.82$ & $49.51$ & $\underline{63.31}$ & $\mathbf{65.02}$\\
						$(b=50)$ 
						& $\pm 0.51$ & $\pm 1.99$ & $\pm 1.44$ & $\pm 0.61$ & $\pm 1.24$ & $\pm 1.56$  & $\pm 0.93$ & $\pm 1.25$\\
						\bottomrule
					\end{tabular}
			}}
		\end{table}
		
		Figure~\ref{CIFAR100} shows ResNet-50 training and test losses on CIFAR-100. SHANG and SHANG++ are competitive with, or better than, the non-adaptive baselines and remain competitive with Adam. Table~\ref{test accuracy} summarizes the mean final test accuracy over five independent runs. SHANG and SHANG++ are comparable to Adam, often surpass AGNES and SNAG, and clearly improve over SGD and NAG. The absolute accuracies are lower than standard benchmarks because we use small batch sizes and only $50$ training epochs to stress-test optimizer stability, rather than to reach full convergence.
		
		An interesting observation is that SGD attains the lowest test loss, but not the best classification accuracy; see Table~\ref{test accuracy}. This mismatch is consistent with prior findings: SGD on cross-entropy with hard labels can lead to confidently wrong predictions~\cite{thulasidasan2020mixuptrainingimprovedcalibration}.
		
		\subsection{Robustness to Multiplicative Gradient Noise}\label{testsigma}
		Our theory predicts that the time-scale coupling $(\alpha,\gamma)$ in SHANG and $(\alpha,\gamma,m)$ in SHANG++ mitigates multiplicative gradient noise. To test this, we fix one hyperparameter configuration per optimizer and evaluate $\sigma \in \{0,0.05,0.1,0.2,0.5\}$.
		
		We test robustness by adding multiplicative noise to the mini-batch gradient. If $g(x_k)$ is the mini-batch gradient at iteration $k$, we use
			$$
			g^\sigma(x_k)
			=
			\bigl(1+\sigma Z_k\bigr) g(x_k),
			\qquad Z_k\sim \mathcal N(0,I_d),
			$$
			with componentwise multiplication. Thus $\sigma$ controls the strength of the added perturbation. In the full-gradient case $g(x_k)=\nabla f(x_k)$, this gives the MNS model in~\eqref{MNS}. In the mini-batch setting, the perturbation is added on top of the intrinsic sampling noise.
		
		This one-shot protocol tests each optimizer's robustness without re-tuning. All experiments use CIFAR-10 with ResNet-34 and batch size $50$, with the same settings as in Subsection~\ref{classification}. We train for $100$ epochs and average over three seeds. The final validation error at epoch $100$ is reported.
		
		\smallskip
		
		\noindent
		\begin{minipage}{0.5125\linewidth}
			\captionsetup{width=1\textwidth}
			\captionof{table}{Relative change in final classification error compared with $\sigma=0$ (lower is better; negative values indicate improvement). Values are averaged over three seeds.}
			\renewcommand{\arraystretch}{1.2}
			\resizebox{6.5cm}{!}{
				\begin{tabular}{@{}lrrrr@{}}
					\toprule
					\multirow{2}{*}{Method} &
					\multicolumn{4}{c}{Relative degradation $\Delta(\%)$ at $\sigma$} \\
					\cmidrule(lr){2-5}
					& 0.05 & 0.1 & 0.2 & 0.5 \\
					\midrule
					SHANG   & $-2.5$ & $-2.1$ & $-1.0$ & $-0.2$ \\
					SHANG++ & $+3.4$ & $-0.6$ & $-2.1$ & $-0.9$ \\
					AGNES   & $-14.4$ & $+16.0$ & $+14.6$ & $+13.5$ \\
					SNAG    & $-2.0$ & $-2.1$ & $-5.0$ & $0.7$ \\
					\bottomrule
				\end{tabular}\\ \\
			}
			\label{tab:sigma_delta}
			\vspace{26pt}
		\end{minipage}
		\qquad
		\begin{minipage}{0.43\linewidth}
			\captionsetup{width=0.99\textwidth}
			\captionof{figure}{\small Validation error under varying multiplicative noise level $\sigma$. Lower is better.}
			\includegraphics[width=\linewidth]{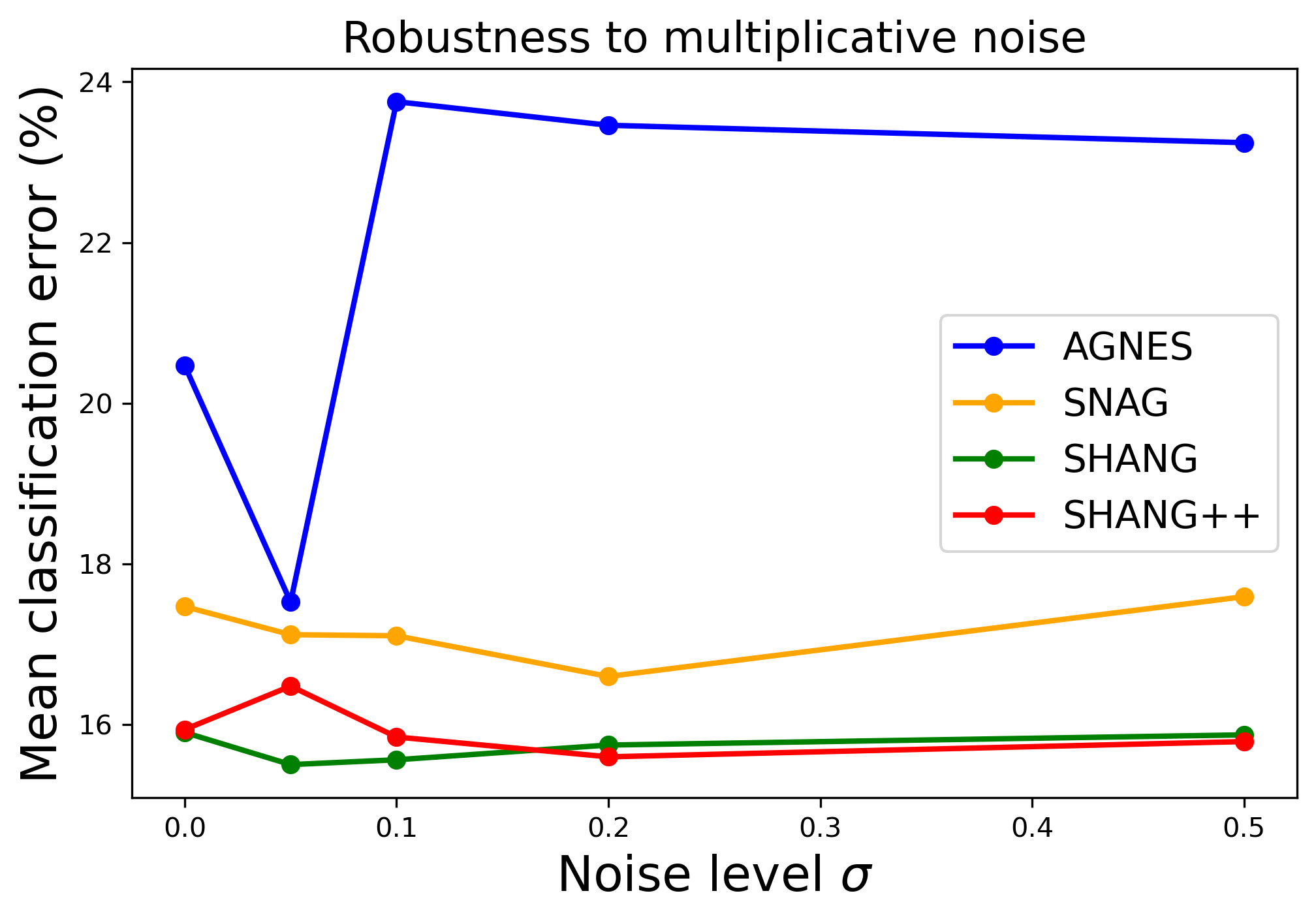}
			\label{fig:sigma_curve}
		\end{minipage}
		
		Figure~\ref{fig:sigma_curve} shows the mean final classification error under varying noise levels, and Table~\ref{tab:sigma_delta} reports the relative degradation
		$$
		\Delta(\sigma)
		=
		100\cdot
		\frac{\mathbb{E}(\sigma)-\mathbb{E}(0)}{\mathbb{E}(0)}.
		$$
		Here $\mathbb{E}(\sigma)$ denotes the mean classification error, averaged over three seeds, at noise level $\sigma$. Smaller $\Delta(\sigma)$ indicates better robustness.\begin{enumerate}
			\item At $\sigma=0$, SHANG and SHANG++ reach $15.9\%$, outperforming SNAG ($17.5\%$) and AGNES ($20.5\%$).
			\item At $\sigma=0.1$, SHANG slightly improves to $15.6\%$, SHANG++ remains stable at $15.9\%$, SNAG improves slightly to $17.1\%$, and AGNES degrades to $23.8\%$.
			\item At $\sigma=0.5$, SHANG and SHANG++ remain near $16\%$, while SNAG rises to $17.6\%$ and AGNES drifts to $23.2\%$, about a $13.5\%$ relative increase.
		\end{enumerate}
		
		These results are consistent with the Lyapunov analysis: the time-scale coupling $(\alpha,\gamma,m)$ suppresses the $\sigma^2$ amplification and gives stable performance without re-tuning. SNAG is stable but less accurate, while AGNES is more sensitive to noise.
		
		\subsection{Image Reconstruction with Small Batch Size}
		We further test the algorithms on image reconstruction with small-batch training, which introduces large gradient noise. We train a lightweight U-Net~\citep{RonnebergerFischerBrox2015}, with base channels $32 \to 64 \to 128$, bilinear up-sampling, and feature concatenation, on CIFAR-10 using batch size $5$. We compare SHANG $(\alpha=0.5, \gamma=0.5)$ and SHANG++ $(\alpha=0.5, \gamma=0.5, m=1)$ with SNAG, AGNES, NAG, SGD, SHB, and Adam. All other settings follow the earlier experiments.
		
		\begin{figure}[!htbp]
			\centering
			\begin{subfigure}{0.45\textwidth}
				\centering
				\includegraphics[width=\linewidth]{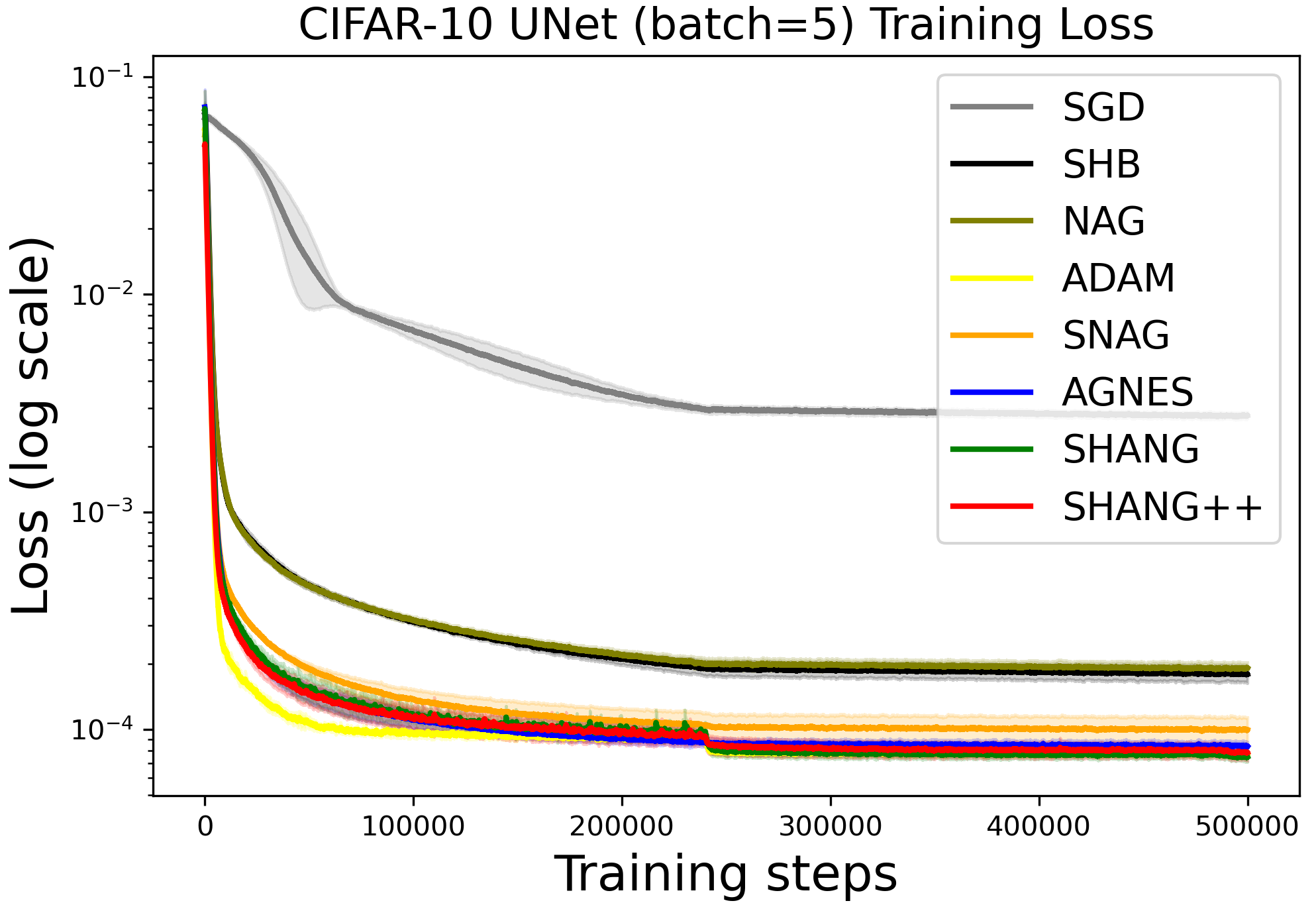}
			\end{subfigure}\hfill
			\begin{subfigure}{0.45\textwidth}
				\centering
				\includegraphics[width=\linewidth]{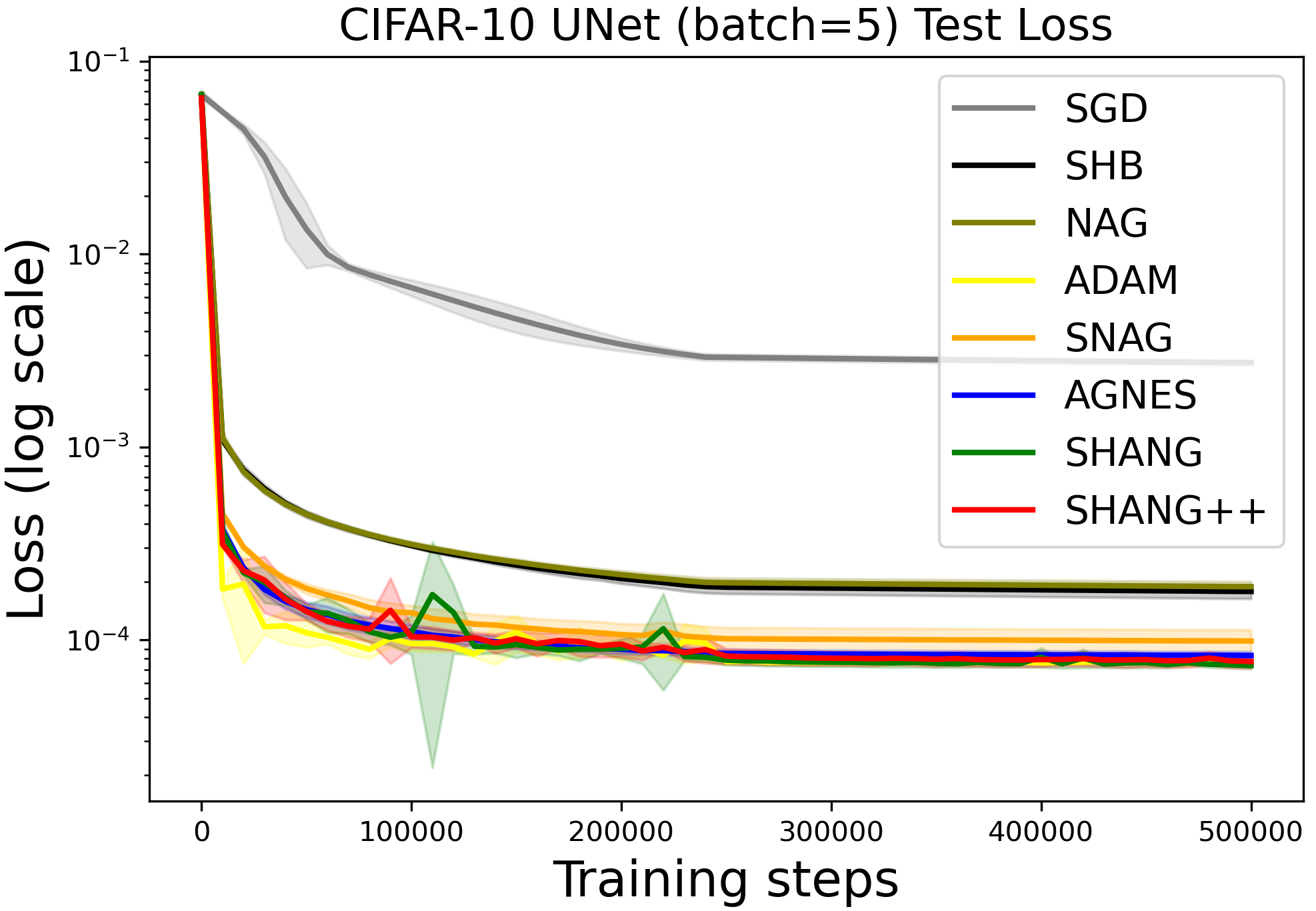}
			\end{subfigure}\hfill
			\caption{\small Training and test loss (log scale, running average with decay $0.99$) on CIFAR-10 using U-Net with batch size $5$.}	
			\label{UNet}
		\end{figure}
		Figure~\ref{UNet} shows training and test losses. Adam achieves the lowest loss because of its adaptive learning rate, while SHANG and SHANG++ outperform the other non-adaptive methods. In particular, SHANG++ trains stably in this high-noise regime, supporting its practical robustness.
		
		\section{Convergence Analysis}\label{analysis}
		Throughout this section, we work on a complete probability space
		\((\Omega,\mathcal F,\mathbb P)\) equipped with a filtration
		\(\{\mathcal F_k\}_{k\ge0}\). We define \(\mathcal F_k\) as the information available before sampling the stochastic gradient at \(x_k\); equivalently, \(\mathcal F_k\) contains the initial data and all stochastic-gradient evaluations up to \(g(x_{k-1})\). Thus \(x_k\) is \(\mathcal F_k\)-measurable, while \(g(x_k)\) is sampled independently afterward. In the one-step analysis, after \(g(x_k)\) is sampled, the quantities \(x_k^+\) and \(x_{k+1}\) are fixed before \(g(x_{k+1})\) is sampled. With our indexing, this intermediate information is represented by \(\mathcal F_{k+1}\), namely the previous information augmented by the stochastic-gradient evaluation \(g(x_k)\).
		
		With this convention in place, we now prove the main convergence results.
		
		\subsection{Proof of Theorem \ref{theorem1}}\label{shanganalysis}
		Before presenting the proof of Theorem \ref{theorem1}, we first establish several auxiliary lemmas, beginning with one that relies on conditional expectations under the MNS assumption.
		\begin{lemma}\label{lemma1}
			Suppose \(x_k\) is generated by SHANG/SHANG++, and \(g(x_k)\) is a conditionally unbiased stochastic gradient estimator satisfying the MNS condition with respect to \(\mathcal F_k\). Then the following statements hold:
			
			1. $\mathbb{E} \left[g(x_k) \mid \mathcal{F}_k \right] =  \nabla f(x_k)$
			
			2. $\mathbb{E} \left[ \| g(x_k) - \nabla f(x_k) \|^2 \mid \mathcal{F}_k \right] \le \sigma^2 \| \nabla f(x_k) \|^2$
			
			3. $\mathbb{E} \left[\langle g(x_k), \nabla f(x_k) \rangle \mid \mathcal{F}_k  \right] = \| \nabla f(x_k) \|^2 $
			
			4. $\mathbb{E} \left[\| g(x_k) \|^2 \mid \mathcal{F}_k  \right] \le (1+\sigma^2) \| \nabla f(x_k) \|^2$
		\end{lemma}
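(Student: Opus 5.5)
Items 1 and 2 are simply the standing assumptions restated: conditional unbiasedness \eqref{eq:unbiase} and the MNS bound \eqref{eq:mns}, imposed for every iterate $x_k$ produced by SHANG or SHANG++. The one point to check is that these assumptions really apply to the iterates. By the filtration convention at the start of Section~\ref{analysis}, $x_k$ is $\mathcal F_k$-measurable. This holds because each update in \eqref{SHANG} and \eqref{SHANG++} makes $x_k$ a deterministic (affine) function of the initial data and of $g(x_0),\dots,g(x_{k-1})$. Meanwhile $g(x_k)$ is sampled afterward, independently of $\mathcal F_k$. So items 1 and 2 hold as stated, and the only work lies in items 3 and 4.

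For item 3, we use that $\nabla f(x_k)$ is $\mathcal F_k$-measurable, since it is a deterministic function of $x_k$. We can therefore pull it out of the conditional expectation (``taking out what is known''):
\[
\mathbb E\bigl[\langle g(x_k),\nabla f(x_k)\rangle\mid\mathcal F_k\bigr]
=\bigl\langle \mathbb E[g(x_k)\mid\mathcal F_k],\nabla f(x_k)\bigr\rangle
=\|\nabla f(x_k)\|^2,
\]
where the last step uses item 1. Integrability is immediate, because $\mathbb E[\|g(x_k)\|^2\mid\mathcal F_k]<\infty$ by item 2, and this justifies the pull-out.

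For item 4, we expand $\|g(x_k)\|^2=\|g(x_k)-\nabla f(x_k)\|^2+2\langle g(x_k)-\nabla f(x_k),\nabla f(x_k)\rangle+\|\nabla f(x_k)\|^2$ and take $\mathbb E[\cdot\mid\mathcal F_k]$. The cross term vanishes by the same pull-out argument together with item 1. The first term is at most $\sigma^2\|\nabla f(x_k)\|^2$ by item 2. Combining the pieces gives $\mathbb E[\|g(x_k)\|^2\mid\mathcal F_k]\le(1+\sigma^2)\|\nabla f(x_k)\|^2$, which is a bias--variance decomposition.

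The only mildly delicate step is the measurability bookkeeping: we must make sure $\nabla f(x_k)$ can be treated as known under $\mathcal F_k$. The same reasoning covers the intermediate $\sigma$-algebra used later for $g(x_{k+1})$, where $x_{k+1}$ is fixed before sampling. Everything else is elementary.
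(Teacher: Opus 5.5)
Your proof is correct and is the same as the paper's: items 1--2 are the standing assumptions, item 3 is the ``taking out what is known'' property applied to the $\mathcal F_k$-measurable vector $\nabla f(x_k)$ (the paper cites Klenke, Theorem 8.14), and item 4 comes from the same expansion of $\|g(x_k)\|^2$ about $\nabla f(x_k)$, with the cross term removed by conditional unbiasedness. Your check that $x_k$ is a function of the initial data and $g(x_0),\dots,g(x_{k-1})$ just spells out what the paper builds into its filtration convention; strictly, an a.s.\ finite conditional second moment justifies the pull-out only in the generalized conditional-expectation sense rather than through unconditional integrability, a point the paper also passes over.
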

		\begin{proof}
			The first statement is the conditional unbiasedness assumption, and the second statement is the MNS assumption. Since \(x_k\) is \(\mathcal F_k\)-measurable, \(\nabla f(x_k)\) is also \(\mathcal F_k\)-measurable. Then, by Theorem 8.14 in \cite{Klenke2013}, we have 
			$$
			\begin{aligned}
				\mathbb{E} \left[\langle g(x_k), \nabla f(x_k) \rangle \mid \mathcal{F}_k \right]  
				= \langle \mathbb{E} \left[ g(x_k)\mid \mathcal{F}_k \right], \nabla f(x_k) \rangle =  \| \nabla f(x_k) \|^2 
			\end{aligned}
			$$
			For the fourth result, using the previous results, we have
			$$\begin{aligned}
				\mathbb{E} \left[\| g(x_k) \|^2 \mid \mathcal{F}_k \right] &= \mathbb{E} \left[\| g(x_k) - \nabla f(x_k) \|^2 + 2 \langle g(x_k), \nabla f(x_k) \rangle - \| \nabla f(x_k) \|^2  \mid \mathcal{F}_k \right]  \\
				& = \mathbb{E} \left[\| g(x_k) - \nabla f(x_k) \|^2 \mid \mathcal{F}_k \right] +  \| \nabla f(x_k) \|^2 \\
				& \le  (1 + \sigma^2) \| \nabla f(x_k) \|^2 \\
			\end{aligned}$$
		\end{proof}
		
		To estimate the one-step Lyapunov difference conditionally, insert the intermediate term $\mathcal{E}(z_{k+1};\gamma_{k+1})$ and write
		\begin{equation}\label{equ151}
			\begin{aligned}
				&\mathcal{E}(z_{k+1}^+;\gamma_{k+1})-\mathcal{E}(z_k^+;\gamma_k) \\
				={}&
				\mathcal{E}(z_{k+1}^+;\gamma_{k+1})-\mathcal{E}(z_{k+1};\gamma_{k+1})
				+
				\mathcal{E}(z_{k+1};\gamma_{k+1})-\mathcal{E}(z_k^+;\gamma_k).
			\end{aligned}
		\end{equation}
		Under the parameter choices in Theorem~\ref{theorem1}, $\alpha_k\beta_k=\alpha_{k+1}\beta_{k+1}\le 1/(L(1+\sigma^2))$. Applying Lemma~\ref{lemma2} conditionally gives
		\begin{equation}\label{equ153}
			\begin{aligned}
				&\mathbb{E}\left[\mathcal{E}(z_{k+1}^+;\gamma_{k+1})-\mathcal{E}(z_{k+1};\gamma_{k+1})\mid\mathcal F_{k+1}\right] \\
				={}& \mathbb{E}\left[f(x_{k+1}^+)-f(x_{k+1})\mid\mathcal F_{k+1}\right] \le -\frac{\alpha_k\beta_k}{2}\|\nabla f(x_{k+1})\|^2 .
			\end{aligned}
		\end{equation}
		Thus the shift from $x_{k+1}$ to $x_{k+1}^+$ gives the needed descent.
		
		The next lemma bounds the second term.
		
		\begin{lemma}\label{lemma4}
			Let $f\in\mathcal{S}_{\mu,L}$. Starting from $x_0=v_0$, generate $(x_k,v_k)$ by~\eqref{SHANG}. Let the stochastic gradients be conditionally unbiased and satisfy the MNS condition~\eqref{eq:mns}. Define $x_k^+$ by~\eqref{auxi1}. Let $\mathcal F_{k+1}$ denote the information available before sampling $g(x_{k+1})$, so that $x_k^+$, $x_{k+1}$, and $v_k$ are $\mathcal F_{k+1}$-measurable. Then
			\begin{equation}\label{lemma4eq}
				\begin{aligned}
					&\mathbb{E}\left[\mathcal{E}(z_{k+1};\gamma_{k+1})-\mathcal{E}(z_k^+;\gamma_k)\mid\mathcal F_{k+1}\right] \\
					&\le -\alpha_k\mathbb{E}\left[\mathcal{E}(z_{k+1};\gamma_{k+1})\mid\mathcal F_{k+1}\right]
					+\frac{\alpha_k^2(1+\sigma^2)}{2\gamma_k}\|\nabla f(x_{k+1})\|^2 .
				\end{aligned}
			\end{equation}
		\end{lemma}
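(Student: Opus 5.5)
We follow the outline given after Theorem~\ref{theorem1} and make each step precise conditionally on $\mathcal F_{k+1}$. View $\mathcal{E}(z;\gamma)=f(x)-f(x^\star)+\frac{\gamma}{2}\|v-x^\star\|^2$ as a function of $z=(x,v)$. Its Bregman divergence is
$$D_{\mathcal E}(z_k^+,z_{k+1};\gamma_k)=D_f(x_k^+,x_{k+1})+\frac{\gamma_k}{2}\|v_k-v_{k+1}\|^2 .$$
We first split off the change in $\gamma$:
$$\mathcal{E}(z_{k+1};\gamma_{k+1})-\mathcal{E}(z_k^+;\gamma_k)=\mathcal{E}(z_{k+1};\gamma_k)-\mathcal{E}(z_k^+;\gamma_k)+\tfrac{\gamma_{k+1}-\gamma_k}{2}\|v_{k+1}-x^\star\|^2 .$$
The schedule \eqref{SHANG:c} bounds the last term by $\frac{\alpha_k(\mu-\gamma_{k+1})}{2}\|v_{k+1}-x^\star\|^2$. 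The identity $\mathcal E(z_{k+1})-\mathcal E(z_k^+)=\langle\nabla\mathcal E(z_{k+1}),z_{k+1}-z_k^+\rangle-D_{\mathcal E}(z_k^+,z_{k+1})$ then reduces everything to the inner product. Note that $D_f(x_k^+,x_{k+1})\ge 0$ can simply be discarded.

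Next we expand the inner product using \eqref{SHANG2:a}--\eqref{SHANG2:b}, namely $x_{k+1}-x_k^+=\alpha_k(v_k-x_{k+1})$ and $\gamma_k(v_{k+1}-v_k)=\alpha_k\mu(x_{k+1}-v_{k+1})-\alpha_k g(x_{k+1})$. Writing $v_k-x_{k+1}=(v_k-x^\star)-(x_{k+1}-x^\star)$ and adding and subtracting $g(x_{k+1})$ produces the identity \eqref{equ146}. In \eqref{equ146}, $\mu$-terms are expanded by the polarization identity $2\langle a-b,b-c\rangle=\|a-c\|^2-\|a-b\|^2-\|b-c\|^2$. The terms $\langle\nabla f(x_{k+1}),v_k-x^\star\rangle$ and $-\langle g(x_{k+1}),v_{k+1}-x^\star\rangle$ combine into $\langle g(x_{k+1}),v_k-v_{k+1}\rangle$ plus the noise term $\langle \nabla f(x_{k+1})-g(x_{k+1}),v_k-x^\star\rangle$. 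Since $x_{k+1},v_k$ are $\mathcal F_{k+1}$-measurable, Lemma~\ref{lemma1} makes this noise term vanish in conditional expectation. This bookkeeping is the main obstacle: one must keep $v_{k+1}$ (which depends on $g(x_{k+1})$) separate from $\mathcal F_{k+1}$-measurable quantities, so that only terms linear in $g(x_{k+1})$ with measurable partners are eliminated.

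Then we bound the remaining terms. By Young's inequality,
$$\alpha_k\langle g(x_{k+1}),v_k-v_{k+1}\rangle\le\frac{\alpha_k^2}{2\gamma_k}\|g(x_{k+1})\|^2+\frac{\gamma_k}{2}\|v_k-v_{k+1}\|^2 .$$
The second part cancels against $-\frac{\gamma_k}{2}\|v_k-v_{k+1}\|^2$ from $-D_{\mathcal E}$. The first part, after conditional expectation and Lemma~\ref{lemma1}(4), becomes $\frac{\alpha_k^2(1+\sigma^2)}{2\gamma_k}\|\nabla f(x_{k+1})\|^2$, which is exactly the positive term in \eqref{lemma4eq}.

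Finally, for the gradient inner product we use $\langle\nabla f(x_{k+1}),x_{k+1}-x^\star\rangle=D_f(x_{k+1},x^\star)+D_f(x^\star,x_{k+1})$, with $D_f(x_{k+1},x^\star)=f(x_{k+1})-f(x^\star)$ and $D_f(x^\star,x_{k+1})\ge\frac{\mu}{2}\|x_{k+1}-x^\star\|^2$. The latter absorbs $+\frac{\alpha_k\mu}{2}\|x_{k+1}-x^\star\|^2$, and $-\frac{\alpha_k\mu}{2}\|x_{k+1}-v_{k+1}\|^2$ is dropped. What remains is
$$-\alpha_k\bigl(f(x_{k+1})-f(x^\star)\bigr)-\frac{\alpha_k\mu}{2}\|v_{k+1}-x^\star\|^2+\frac{\alpha_k(\mu-\gamma_{k+1})}{2}\|v_{k+1}-x^\star\|^2 .$$
This equals $-\alpha_k\mathcal{E}(z_{k+1};\gamma_{k+1})$. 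Taking $\mathbb E[\cdot\mid\mathcal F_{k+1}]$, with $v_{k+1}$ kept inside the expectation, gives \eqref{lemma4eq}.
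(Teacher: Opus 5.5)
Your proposal is correct and takes essentially the same route as the paper's proof. Both split off the $\gamma$-change via the schedule bound, use the Bregman identity for $\mathcal{E}$ (the paper's \eqref{equ9}), drop the martingale term conditionally on $\mathcal F_{k+1}$, apply the polarization identity, cancel the Young's-inequality term against $-\frac{\gamma_k}{2}\|v_k-v_{k+1}\|^2$, use the conditional MNS bound, and let strong convexity absorb $\frac{\alpha_k\mu}{2}\|x_{k+1}-x^\star\|^2$. Your explicit check that the leftover terms recombine into $-\alpha_k\mathcal{E}(z_{k+1};\gamma_{k+1})$ simply spells out the final combination step that the paper leaves implicit.
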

		
		\begin{proof}
			Using the update rules~\eqref{SHANG}, we expand
			\begin{equation}\label{equ9}
				\begin{aligned}
					&\mathcal{E}(z_{k+1};\gamma_{k+1})-\mathcal{E}(z_k^+;\gamma_k) \\
					\le{}& -\alpha_k\langle \nabla f(x_{k+1})-\nabla f(x^\star),x_{k+1}-x^\star\rangle
					+\alpha_k\langle g(x_{k+1}),v_k-v_{k+1}\rangle \\
					&\quad +\alpha_k\mu\langle v_{k+1}-x^\star,x_{k+1}-v_{k+1}\rangle
					+\alpha_k\langle \nabla f(x_{k+1})-g(x_{k+1}),v_k-x^\star\rangle \\
					&\quad +\frac{\alpha_k(\mu-\gamma_{k+1})}{2}\|v_{k+1}-x^\star\|^2
					-D_{\mathcal E}(z_k^+,z_{k+1};\gamma_k).
				\end{aligned}
			\end{equation}
			Taking conditional expectation with respect to $\mathcal F_{k+1}$, the martingale term vanishes:
			$$
			\mathbb{E}\left[\langle \nabla f(x_{k+1})-g(x_{k+1}),v_k-x^\star\rangle\mid\mathcal F_{k+1}\right]=0.
			$$
			By strong convexity,
			\begin{equation}\label{equ10}
				-\langle \nabla f(x_{k+1})-\nabla f(x^\star),x_{k+1}-x^\star\rangle
				\le
				-\bigl(f(x_{k+1})-f(x^\star)\bigr)-\frac{\mu}{2}\|x_{k+1}-x^\star\|^2.
			\end{equation}
			Also,
			\begin{equation}\label{equ11}
				\alpha_k\mu\langle v_{k+1}-x^\star,x_{k+1}-v_{k+1}\rangle
				=
				\frac{\alpha_k\mu}{2}\left(\|x_{k+1}-x^\star\|^2-\|x_{k+1}-v_{k+1}\|^2-\|v_{k+1}-x^\star\|^2\right).
			\end{equation}
			For the stochastic cross term, Young's inequality gives
			\begin{equation}\label{equ12}
				\begin{aligned}
					\alpha_k\langle g(x_{k+1}),v_k-v_{k+1}\rangle
					&\le
					\frac{\alpha_k^2}{2\gamma_k}\|g(x_{k+1})\|^2
					+\frac{\gamma_k}{2}\|v_k-v_{k+1}\|^2.
				\end{aligned}
			\end{equation}
			Using the conditional MNS bound,
			$$
			\mathbb{E}\left[\|g(x_{k+1})\|^2\mid\mathcal F_{k+1}\right]
			\le
			(1+\sigma^2)\|\nabla f(x_{k+1})\|^2.
			$$
			Finally,
			\begin{equation}\label{equ13}
				-D_{\mathcal E}(z_k^+,z_{k+1};\gamma_k)
				=
				-D_f(x_k^+,x_{k+1})-\frac{\gamma_k}{2}\|v_k-v_{k+1}\|^2
				\le
				-\frac{\gamma_k}{2}\|v_k-v_{k+1}\|^2 .
			\end{equation}
			Combining \eqref{equ9}--\eqref{equ13} gives
			\begin{equation}\label{equ14}
				\begin{aligned}
					&\mathbb{E}\left[\mathcal{E}(z_{k+1};\gamma_{k+1})-\mathcal{E}(z_k^+;\gamma_k)\mid\mathcal F_{k+1}\right] \\
					\le{}&
					-\alpha_k\mathbb{E}\left[\mathcal{E}(z_{k+1};\gamma_{k+1})\mid\mathcal F_{k+1}\right]
					+\frac{\alpha_k^2(1+\sigma^2)}{2\gamma_k}\|\nabla f(x_{k+1})\|^2,
				\end{aligned}
			\end{equation}
			where the extra negative term $-\frac{\alpha_k\mu}{2}\|x_{k+1}-v_{k+1}\|^2$ is dropped. 
			This proves the lemma.
		\end{proof}
		
		Lemma \ref{lemma4} provides an upper bound for the second term in~\eqref{equ151}. Combining the two bounds, we obtain:
		\begin{equation}\label{equ152}
			\begin{aligned}
				&\mathbb{E}\left[\mathcal{E}(z_{k+1}^+; \gamma_{k+1})- \mathcal{E}(z_k^+;\gamma_{k})\right]   \\
				\le{}& \mathbb{E}\left[- \alpha_k \mathcal{E}(z_{k+1}; \gamma_{k+1}) +( \frac{\alpha_k^2(1+\sigma^2)}{2\gamma_{k}} -\frac{\alpha_{k}\beta_k}{2} ) \| \nabla f(x_{k+1}) \|^2 \right]
			\end{aligned}
		\end{equation}
		Indeed, by the definition of \(\mathcal{E}\), we have
		\[
		\begin{aligned}
			\mathbb{E}\left[-\alpha_k \mathcal{E}(z_{k+1};\gamma_{k+1})\right]
			={}&
			\mathbb{E}\left[
			-\alpha_k\bigl(f(x_{k+1})-f(x^\star)\bigr)
			-\frac{\alpha_k\gamma_{k+1}}{2}\|v_{k+1}-x^\star\|^2
			\right].
		\end{aligned}
		\]
		Applying Lemma~\ref{lemma2} to the update from \(x_{k+1}\) to \(x_{k+1}^+\) gives
		\[
		\mathbb{E}\left[f(x_{k+1}^+)\right]
		\le
		\mathbb{E}\left[f(x_{k+1})
		-\frac{\alpha_k\beta_k}{2}\|\nabla f(x_{k+1})\|^2
		\right].
		\]
		Therefore,
		\[
		\begin{aligned}
			\mathbb{E}\left[-\alpha_k \mathcal{E}(z_{k+1};\gamma_{k+1})\right]
			&\le
			\mathbb{E}\left[
			-\alpha_k\bigl(f(x_{k+1}^+)-f(x^\star)\bigr)
			-\frac{\alpha_k\gamma_{k+1}}{2}\|v_{k+1}-x^\star\|^2
			\right.\\
			&\qquad\left.
			-\frac{\alpha_k^2\beta_k}{2}\|\nabla f(x_{k+1})\|^2
			\right] \\
			&=
			\mathbb{E}\left[
			-\alpha_k\mathcal{E}(z_{k+1}^+;\gamma_{k+1})
			-\frac{\alpha_k^2\beta_k}{2}\|\nabla f(x_{k+1})\|^2
			\right].
		\end{aligned}
		\]
		Then
		\begin{equation}
			\begin{aligned}
				&\mathbb{E}\left[\mathcal{E}(z_{k+1}^+; \gamma_{k+1})- \mathcal{E}(z_k^+;\gamma_{k})\right]   \\
				\le{}& \mathbb{E}\left[- \alpha_k \mathcal{E}(z_{k+1}^+; \gamma_{k+1}) +( \frac{\alpha_k^2(1+\sigma^2)}{2\gamma_{k}} -\frac{\alpha_{k}\beta_k(1+\alpha_{k})}{2} ) \| \nabla f(x_{k+1}) \|^2 \right]
			\end{aligned}
		\end{equation}
		By the choice $\beta_k=\alpha_k(1+\sigma^2)/\gamma_k$, we have
		$$\frac{\alpha_k^2(1+\sigma^2)}{2\gamma_{k}} -\frac{\alpha_{k}\beta_k(1+\alpha_{k})}{2}=- \frac{\alpha_k^3(1+\sigma^2)}{2\gamma_{k}} \le 0.$$
		Then the coefficient of the gradient term is nonpositive. Thus,
		\begin{align*}
			\mathbb{E}\left[\mathcal{E}(z_{k+1}^+; \gamma_{k+1})- \mathcal{E}(z_k^+;\gamma_{k})\right]  \le  \mathbb{E}\left[- \alpha_k \mathcal{E}(z_{k+1}^+; \gamma_{k+1}) \right]
		\end{align*}
		
		For the choices of $\alpha_k$ and $\gamma_k$ in Theorem~\ref{theorem1},
		we can verify that
		\[
		\frac{\gamma_{k+1}-\gamma_k}{\alpha_k}
		\le \mu-\gamma_{k+1},
		\]
		and hence the condition~\eqref{SHANG:c} is satisfied. We now apply the preceding Lyapunov estimate to the two cases in
		Theorem~\ref{theorem1}.
		
		\noindent (1). When $ \mu > 0$, set $\gamma = \mu$ and $\alpha \le \frac{1}{1+\sigma^2}\sqrt{\frac{\mu}{L}}$. Then,
		\begin{equation}
			\mathbb{E}\left[\mathcal{E}(z_{k+1}^+; \mu)\right] \le (1+\alpha)^{-1} \mathbb{E}\left[\mathcal{E}(z_{k}^+; \mu)\right]  \le (1+\alpha)^{-(k+1)} \mathbb{E} [\mathcal{E}(z_{0}^+; \mu)]
		\end{equation}
		
		\noindent (2). When $ \mu = 0$, set $\alpha_k = \frac{2}{k+1}$, $\beta_{k} = \frac{\alpha_k(1+\sigma^2)}{\gamma_{k}}$ and $\gamma_k = \alpha_k^2(1+\sigma^2)^2L$. Then
		\begin{equation}
			\mathbb{E}\left[\mathcal{E}(z_{k+1}^+; \gamma_{k+1})\right] \le \frac{1}{1+\alpha_k} \mathbb{E}\left[\mathcal{E}(z_{k}^+; \gamma_k)\right]  \le \prod_{i=0}^k \frac{1}{1+\alpha_i} \mathbb{E} [ \mathcal{E}(z_{0}^+; \gamma_0)]
		\end{equation}
		
		This proves Theorem~\ref{theorem1}.
		
		\subsection{Proof of Theorem \ref{theorem2}}\label{shang++scanalysis}
		For SHANG++ in the strongly convex setting~\eqref{equ26}, we use the same intermediate-step decomposition. To estimate the one-step Lyapunov difference, we insert $\mathcal{E}(z_{k+1};\mu)$ and write
		\begin{equation}\label{equ154}
			\begin{aligned}
				&\mathcal{E}(z_{k+1}^+;\mu)-\mathcal{E}(z_k^+;\mu) \\
				={}&
				\mathcal{E}(z_{k+1}^+;\mu)-\mathcal{E}(z_{k+1};\mu)
				+
				\mathcal{E}(z_{k+1};\mu)-\mathcal{E}(z_k^+;\mu).
			\end{aligned}
		\end{equation}
		Under the parameter choices in Theorem~\ref{theorem2},
		$$
		\tilde{\alpha}\beta
		=
		\frac{\tilde{\alpha}^2(1+\sigma^2)}{\mu}
		\le
		\frac{1}{L(1+\sigma^2)}.
		$$
		Applying the conditional descent estimate in Lemma~\ref{lemma2} to the auxiliary step
		$$
		x_{k+1}^+=x_{k+1}-\tilde{\alpha}\beta g(x_{k+1}),
		$$
		with $\alpha_{k+1}\beta_{k+1}$ replaced by $\tilde{\alpha}\beta$, yields
		\begin{equation}\label{equ154_}
			\begin{aligned}
				&\mathbb{E}\left[\mathcal{E}(z_{k+1}^+;\mu)-\mathcal{E}(z_{k+1};\mu)\mid\mathcal F_{k+1}\right] \\
				={}&
				\mathbb{E}\left[f(x_{k+1}^+)-f(x_{k+1})\mid\mathcal F_{k+1}\right]  
				\le
				-\frac{\tilde{\alpha}^2(1+\sigma^2)}{2\mu}\|\nabla f(x_{k+1})\|^2 .
			\end{aligned}
		\end{equation}
		The next lemma controls the conditional difference
		$$
		\mathbb{E}\left[\mathcal{E}(z_{k+1};\mu)-\mathcal{E}(z_k^+;\mu)\mid\mathcal F_{k+1}\right].
		$$
		
		\begin{lemma}\label{lemma7}
			Let $f\in\mathcal{S}_{\mu,L}$. Starting from $x_0=v_0$, generate $(x_k,v_k)$ by~\eqref{equ26}. Let the stochastic gradients be conditionally unbiased and satisfy the MNS condition~\eqref{eq:mns}. Define $x_k^+$ by~\eqref{auxi2}. Let $\mathcal F_{k+1}$ be the information available before sampling $g(x_{k+1})$, so that $x_k^+$, $x_{k+1}$, and $v_k$ are $\mathcal F_{k+1}$-measurable. Then
			\begin{equation}\label{lemma7eq}
				\begin{aligned}
					&\mathbb{E}\left[\mathcal{E}(z_{k+1};\mu)-\mathcal{E}(z_k^+;\mu)\mid\mathcal F_{k+1}\right] \\
					\le{}&
					-\alpha\mathbb{E}\left[\mathcal{E}(z_{k+1};\mu)\mid\mathcal F_{k+1}\right]
					+
					\frac{\tilde{\alpha}^2(1+\alpha)(1+\sigma^2)}{2\mu}
					\|\nabla f(x_{k+1})\|^2 .
				\end{aligned}
			\end{equation}
		\end{lemma}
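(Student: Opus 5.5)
The proof will follow the template of Lemma~\ref{lemma4}. I will first derive the exact identity~\eqref{equ102} for $\mathcal{E}(z_{k+1};\mu)-\mathcal{E}(z_k^+;\mu)$, and then bound each term conditionally on $\mathcal F_{k+1}$.

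\textbf{The identity.} Write $\mathcal{E}(z;\mu)=f(x)-f(x^\star)+\frac{\mu}{2}\|v-x^\star\|^2$, a function of $z=(x,v)$. The Bregman identity gives
$$
\mathcal{E}(z_{k+1};\mu)-\mathcal{E}(z_k^+;\mu)=\langle \nabla\mathcal{E}(z_{k+1};\mu),z_{k+1}-z_k^+\rangle-D_f(x_k^+,x_{k+1})-\frac{\mu}{2}\|v_k-v_{k+1}\|^2 .
$$
I will substitute the two updates of~\eqref{equ26}.
\begin{itemize}
\item From~\eqref{equ26:a}, $x_{k+1}-x_k^+=\tilde{\alpha}(v_k-x_{k+1})$.
\item From~\eqref{equ26:b}, $v_{k+1}-v_k=\alpha(x_{k+1}-v_{k+1})-\frac{\alpha}{\mu}g(x_{k+1})$.
\end{itemize}
The gradient part is then $\tilde{\alpha}\langle\nabla f(x_{k+1}),v_k-x_{k+1}\rangle$. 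I split $v_k-x_{k+1}=(v_k-v_{k+1})+(v_{k+1}-x_{k+1})$ and use $\nabla f(x^\star)=0$. The identity $\tilde{\alpha}(1+\alpha)=\alpha$ (from $m=1$) is what converts the $\tilde{\alpha}$-weighted terms into the $-\alpha\langle\nabla f(x_{k+1})-\nabla f(x^\star),x_{k+1}-x^\star\rangle$ term. It leaves the residual stochastic cross terms $\tilde{\alpha}\langle g,v_k-v_{k+1}\rangle$ and $\alpha\tilde{\alpha}\langle g,x_{k+1}-v_{k+1}\rangle$, plus two martingale terms of the form $\langle\nabla f(x_{k+1})-g(x_{k+1}),\cdot\rangle$ with $\mathcal F_{k+1}$-measurable second argument.

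For the $v$-part, $\mu\langle v_{k+1}-x^\star,v_{k+1}-v_k\rangle$ gives $\alpha\mu\langle v_{k+1}-x^\star,x_{k+1}-v_{k+1}\rangle$. The polarization identity~\eqref{equ11} expands this into the three squared norms in~\eqref{equ102}.

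\textbf{Main obstacle.} I expect the bookkeeping in this step to be the hardest part: checking that every $g$-term pairs with $v_k-x^\star$ or $x_{k+1}-x^\star$ (so it is a martingale increment), or else with a difference that can be absorbed. If the direct expansion does not close, I would first expand $v_k-x_{k+1}$ around $v_{k+1}$ and then re-express $v_{k+1}-v_k$ using~\eqref{equ26:b}.

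\textbf{Bounding the terms.} With~\eqref{equ102} in hand, I take $\mathbb E[\cdot\mid\mathcal F_{k+1}]$ and handle the terms as follows.
\begin{itemize}
\item The two martingale terms vanish by Lemma~\ref{lemma1}, since $v_k$, $x_{k+1}$ and $x^\star$ are $\mathcal F_{k+1}$-measurable.
\item Strong convexity~\eqref{equ10} cancels $+\frac{\alpha\mu}{2}\|x_{k+1}-x^\star\|^2$ and leaves $-\alpha(f(x_{k+1})-f(x^\star))$.
\item Young's inequality with weight $\mu$ bounds the two remaining cross terms by
$$
\frac{\tilde{\alpha}^2}{2\mu}\|g\|^2+\frac{\mu}{2}\|v_k-v_{k+1}\|^2
\quad\text{and}\quad
\frac{\alpha\tilde{\alpha}^2}{2\mu}\|g\|^2+\frac{\alpha\mu}{2}\|x_{k+1}-v_{k+1}\|^2 .
$$
The squared-norm parts cancel exactly the negative terms $-\frac{\mu}{2}\|v_k-v_{k+1}\|^2$ and $-\frac{\alpha\mu}{2}\|v_{k+1}-x_{k+1}\|^2$.
\item The conditional MNS bound (Lemma~\ref{lemma1}, item 4) turns $\|g(x_{k+1})\|^2$ into $(1+\sigma^2)\|\nabla f(x_{k+1})\|^2$, producing the coefficient $\frac{\tilde{\alpha}^2(1+\alpha)(1+\sigma^2)}{2\mu}$.
\item The term $-D_f(x_k^+,x_{k+1})\le 0$ is dropped.
\end{itemize}

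\textbf{Conclusion.} What remains is $-\alpha\bigl(f(x_{k+1})-f(x^\star)+\frac{\mu}{2}\|v_{k+1}-x^\star\|^2\bigr)=-\alpha\,\mathcal{E}(z_{k+1};\mu)$, taken inside the conditional expectation since $v_{k+1}$ depends on $g(x_{k+1})$. This gives~\eqref{lemma7eq}.
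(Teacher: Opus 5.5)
Your proposal is correct and follows the paper's proof essentially step for step: the Bregman expansion leading to \eqref{equ102}, the two Young inequalities with weight $\mu$ whose quadratic parts cancel $-\frac{\mu}{2}\|v_k-v_{k+1}\|^2$ and $-\frac{\alpha\mu}{2}\|x_{k+1}-v_{k+1}\|^2$, strong convexity to absorb $\frac{\alpha\mu}{2}\|x_{k+1}-x^\star\|^2$, dropping $-D_f(x_k^+,x_{k+1})$, and the conditional MNS bound. The one point needing care is the grouping you flag as the main obstacle: first replace $\nabla f(x_{k+1})$ by $g(x_{k+1})$ in $\tilde\alpha\langle\nabla f(x_{k+1}),v_k-x_{k+1}\rangle$, while it is still paired with the $\mathcal F_{k+1}$-measurable vector $v_k-x_{k+1}$, and only then split off $v_{k+1}$. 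If you instead applied Young to $\tilde\alpha\langle\nabla f(x_{k+1}),v_k-v_{k+1}\rangle$ and $\langle\alpha g(x_{k+1})-\tilde\alpha\nabla f(x_{k+1}),x_{k+1}-v_{k+1}\rangle$, you would get $\frac{\alpha\sigma^2}{2\mu}\|\nabla f(x_{k+1})\|^2$ instead of $\frac{\alpha\tilde\alpha\sigma^2}{2\mu}\|\nabla f(x_{k+1})\|^2$, and the stated constant would not follow.
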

		
		\begin{proof}
			Using the update rules~\eqref{equ26}, we first expand
			\begin{equation}\label{equ133}
				\begin{aligned}
					&\mathcal{E}(z_{k+1};\mu)-\mathcal{E}(z_k^+;\mu) \\
					={}&
					\tilde{\alpha}\langle \nabla f(x_{k+1})-\nabla f(x^\star),v_k-x_{k+1}\rangle
					-\alpha\langle g(x_{k+1}),v_{k+1}-x^\star\rangle\\
					&
					+\alpha\mu\langle v_{k+1}-x^\star,x_{k+1}-v_{k+1}\rangle 
					-D_{\mathcal E}(z_k^+,z_{k+1};\mu).
				\end{aligned}
			\end{equation}
			Split the gradient terms as
			\begin{equation}\label{equ134split}
				\begin{aligned}
					&\tilde{\alpha}\langle \nabla f(x_{k+1})-\nabla f(x^\star),v_k-x_{k+1}\rangle
					-\alpha\langle g(x_{k+1}),v_{k+1}-x^\star\rangle \\
					={}						&
					-\alpha\langle \nabla f(x_{k+1})-\nabla f(x^\star),x_{k+1}-x^\star\rangle \\
					&+
					\tilde{\alpha}\langle g(x_{k+1}),v_k-v_{k+1}\rangle
					+\alpha\tilde{\alpha}\langle g(x_{k+1}),x_{k+1}-v_{k+1}\rangle \\
					&
					+\tilde{\alpha}\langle \nabla f(x_{k+1})-g(x_{k+1}),v_k-x_{k+1}\rangle
					+\alpha\langle \nabla f(x_{k+1})-g(x_{k+1}),x_{k+1}-x^\star\rangle,
				\end{aligned}
			\end{equation}
			where we used $\alpha-\tilde{\alpha}=\alpha\tilde{\alpha}$. Taking conditional expectation with respect to $\mathcal F_{k+1}$, the last two terms in~\eqref{equ134split} vanish by conditional unbiasedness. By Young's inequality,
			\begin{equation}\label{equ137}
				\tilde{\alpha}\langle g(x_{k+1}),v_k-v_{k+1}\rangle
				\le
				\frac{\tilde{\alpha}^2}{2\mu}\|g(x_{k+1})\|^2
				+
				\frac{\mu}{2}\|v_k-v_{k+1}\|^2,
			\end{equation}
			and
			\begin{equation}\label{equ135}
				\alpha\tilde{\alpha}\langle g(x_{k+1}),x_{k+1}-v_{k+1}\rangle
				\le
				\frac{\alpha\tilde{\alpha}^2}{2\mu}\|g(x_{k+1})\|^2
				+
				\frac{\alpha\mu}{2}\|x_{k+1}-v_{k+1}\|^2.
			\end{equation}
			
			We expand the cross term using the square identity
			$$
			\alpha\mu\langle v_{k+1}-x^\star,x_{k+1}-v_{k+1}\rangle
			=
			\frac{\alpha\mu}{2}\left(\|x_{k+1}-x^\star\|^2-\|x_{k+1}-v_{k+1}\|^2-\|v_{k+1}-x^\star\|^2\right).
			$$
			The negative terms 
			$$
			-	D_{\mathcal E}(z_k^+,z_{k+1};\mu)
			=
			-	D_f(x_k^+,x_{k+1}) - \frac{\mu}{2}\|v_k-v_{k+1}\|^2.
			$$
			We will use those negative terms to cancel positive terms in \eqref{equ137} and \eqref{equ135} and obtain
			\begin{equation}\label{equ138}
				\begin{aligned}
					&\mathbb{E}\left[\mathcal{E}(z_{k+1};\mu)-\mathcal{E}(z_k^+;\mu)\mid\mathcal F_{k+1}\right] \\
					\le{}&
					-\alpha\mathbb{E}\left[\mathcal{E}(z_{k+1};\mu)\mid\mathcal F_{k+1}\right]
					+
					\frac{\tilde{\alpha}^2(1+\alpha)}{2\mu}
					\mathbb{E}\left[\|g(x_{k+1})\|^2\mid\mathcal F_{k+1}\right].
				\end{aligned}
			\end{equation}
			Finally, the conditional second-moment bound gives
			$$
			\mathbb{E}\left[\|g(x_{k+1})\|^2\mid\mathcal F_{k+1}\right]
			\le
			(1+\sigma^2)\|\nabla f(x_{k+1})\|^2.
			$$
			Substituting this into~\eqref{equ138} proves~\eqref{lemma7eq}.
		\end{proof}
		Substituting the bound from Lemma~\ref{lemma7}, together with \eqref{equ154_}, into the decomposition \eqref{equ154}, we obtain
		\begin{equation}\label{equ155}
			\begin{aligned}
				\mathbb{E}\left[\mathcal{E}(z_{k+1}^+; \mu) - \mathcal{E}(z_{k}^+; \mu)\right]  \le \mathbb{E}\left[-\alpha \mathcal{E}(z_{k+1}; \mu)  +\frac{\tilde{\alpha}^2\alpha (1+\sigma^2)}{2\mu} \| \nabla f(x_{k+1}) \|^2 \right]
			\end{aligned}
		\end{equation} 
		It remains to rewrite the term \(-\alpha\mathcal{E}(z_{k+1};\mu)\) in terms of
		\(\mathcal{E}(z_{k+1}^+;\mu)\). By the definition of \(\mathcal{E}\),
		\[
		\begin{aligned}
			\mathbb{E}\left[-\alpha \mathcal{E}(z_{k+1};\mu)\right]
			=
			\mathbb{E}\left[
			-\alpha\bigl(f(x_{k+1})-f(x^\star)\bigr)
			-\frac{\alpha\mu}{2}\|v_{k+1}-x^\star\|^2
			\right].
		\end{aligned}
		\]
		Applying the same descent estimate to the auxiliary step with step size
		\(\tilde{\alpha}\beta\) gives
		\[
		\mathbb{E}\left[-\alpha f(x_{k+1})\right]
		\le
		\mathbb{E}\left[
		-\alpha f(x_{k+1}^+)
		-\frac{\alpha\tilde{\alpha}\beta}{2}
		\|\nabla f(x_{k+1})\|^2
		\right].
		\]
		Since \(\tilde{\alpha}\beta=\frac{\tilde{\alpha}^2(1+\sigma^2)}{\mu}\), we obtain
		\[
		\begin{aligned}
			\mathbb{E}\left[-\alpha \mathcal{E}(z_{k+1};\mu)\right]
			&\le
			\mathbb{E}\left[
			-\alpha\bigl(f(x_{k+1}^+)-f(x^\star)\bigr)
			-\frac{\alpha\mu}{2}\|v_{k+1}-x^\star\|^2
			\right.\\
			&\qquad\left.
			-\frac{\alpha\tilde{\alpha}^2(1+\sigma^2)}{2\mu}
			\|\nabla f(x_{k+1})\|^2
			\right] \\
			&=
			\mathbb{E}\left[
			-\alpha\mathcal{E}(z_{k+1}^+;\mu)
			-\frac{\alpha\tilde{\alpha}^2(1+\sigma^2)}{2\mu}
			\|\nabla f(x_{k+1})\|^2
			\right].
		\end{aligned}
		\]
		Substituting this estimate into the previous inequality cancels the remaining positive gradient-norm term. Therefore,
		\[
		\mathbb{E}\left[
		\mathcal{E}(z_{k+1}^+;\mu)
		-
		\mathcal{E}(z_k^+;\mu)
		\right]
		\le
		\mathbb{E}\left[-\alpha\mathcal{E}(z_{k+1}^+;\mu)\right].
		\]
		Rearranging gives
		\[
		\mathbb{E}\left[\mathcal{E}(z_{k+1}^+;\mu)\right]
		\le
		\frac{1}{1+\alpha}
		\mathbb{E}\left[\mathcal{E}(z_k^+;\mu)\right].
		\]
		Since \(\alpha=\frac{\tilde{\alpha}}{1-\tilde{\alpha}}\), we have
		\(
		\frac{1}{1+\alpha}=1-\tilde{\alpha}.
		\)
		Hence,
		\[
		\mathbb{E}\left[\mathcal{E}(z_{k+1}^+;\mu)\right]
		\le
		(1-\tilde{\alpha})
		\mathbb{E}\left[\mathcal{E}(z_k^+;\mu)\right]
		\le
		(1-\tilde{\alpha})^{k+1}
		\mathbb{E}\left[\mathcal{E}(z_0^+;\mu)\right].
		\]
		This proves Theorem~\ref{theorem2}.
		
		\subsection{Proof of Corollaries~\ref{corollary1} and~\ref{corollary3}}\label{corollaryanalysis}
		
		We begin by recalling the definition of almost sure convergence.
		\begin{definition}
			Let $\{x_k^+\}$ be generated by an algorithm. We say that $f(x_k^+)$ converges almost surely to $f(x^\star)$ if, for every $\varepsilon>0$,
			\[
			\mathbb{P}\left(
			\left\{
			\left|f(x_k^+) - f(x^\star)\right| \ge \varepsilon
			\quad \mathrm{i.o.}
			\right\}
			\right)=0,
			\]
			where $\mathrm{i.o.}$ stands for infinitely often.
		\end{definition}
		
		Since $x^\star$ is a global minimizer, we have
		\(
		f(x_k^+) - f(x^\star) \ge 0.
		\)
		Thus, for any $\varepsilon>0$, Markov's inequality gives
		\[
		\mathbb{P}\left(f(x_k^+) - f(x^\star) \ge \varepsilon\right)
		\le
		\frac{1}{\varepsilon}
		\mathbb{E}\left[f(x_k^+) - f(x^\star)\right].
		\]
		
		In the strongly convex case, Theorems~\ref{theorem1} and~\ref{theorem2} imply that
		\(
		\mathbb{E}\left[f(x_k^+) - f(x^\star)\right]
		\le Cq^k
		\)
		for some constants $C>0$ and $q\in(0,1)$. Hence
		\[
		\sum_{k=1}^{\infty}
		\mathbb{P}\left(f(x_k^+) - f(x^\star) \ge \varepsilon\right)
		\le
		\frac{C}{\varepsilon}
		\sum_{k=1}^{\infty}q^k
		<\infty.
		\]
		
		In the convex case, Theorems~\ref{theorem1} and~\ref{theorem3} imply that
		\(
		\mathbb{E}\left[f(x_k^+) - f(x^\star)\right]
		\le \frac{C}{k^2}
		\)
		for some constant $C>0$. Hence
		\[
		\sum_{k=1}^{\infty}
		\mathbb{P}\left(f(x_k^+) - f(x^\star) \ge \varepsilon\right)
		\le
		\frac{C}{\varepsilon}
		\sum_{k=1}^{\infty}\frac{1}{k^2}
		<\infty.
		\]
		
		Therefore, in both cases,
		\(
		\sum_{k=1}^{\infty}
		\mathbb{P}\left(f(x_k^+) - f(x^\star) \ge \varepsilon\right)
		<\infty.
		\)
		It follows that
		\[
		\begin{aligned}
			\mathbb{P}\left(
			\left\{
			\left|f(x_k^+) - f(x^\star)\right| \ge \varepsilon
			\quad \mathrm{i.o.}
			\right\}
			\right) 
			&=
			\mathbb{P}\left(
			\bigcap_{N=1}^{\infty}
			\bigcup_{k\ge N}
			\left\{
			f(x_k^+) - f(x^\star)\ge \varepsilon
			\right\}
			\right) \\
			&=
			\lim_{N\to\infty}
			\mathbb{P}\left(
			\bigcup_{k\ge N}
			\left\{f(x_k^+) - f(x^\star)\ge \varepsilon
			\right\}
			\right) \\
			&\le
			\lim_{N\to\infty}
			\sum_{k\ge N}
			\mathbb{P}\left(
			f(x_k^+) - f(x^\star) \ge \varepsilon
			\right)
			=0.
		\end{aligned}
		\]
		Here the last equality follows from the convergence of the probability series above. Since $\varepsilon>0$ is arbitrary, we conclude that
		\[
		f(x_k^+) \to f(x^\star)
		\qquad \text{almost surely}.
		\]
		
		\section{Conclusion}\label{conclusion}
		We proposed SHANG and SHANG++, two stochastic accelerated methods designed for training under multiplicative noise scaling. SHANG++ augments SHANG with a simple damping correction, which improves robustness in noisy regimes while preserving fast convergence. Under the MNS condition, we established accelerated convergence guarantees in both convex and strongly convex settings, together with explicit parameter choices. Empirically, on convex benchmarks as well as image classification and reconstruction tasks, a single hyperparameter configuration performs consistently well and often maintains accuracy within one percentage point of the noise-free setting for \(\sigma\le0.5\) in our tests. Compared with other stochastic momentum methods, SHANG++ is more stable under small-batch or high-noise conditions and is competitive with widely used optimizers such as Adam. As natural extensions of the present framework toward adaptive optimization, in follow-up work we have developed Adam-HNAG \cite{yu2026adamhnagconvergentreformulationadam} and Adam-SHANG \cite{yu2026adamshangconvergentadamtypemethod}, which incorporate Adam-style adaptive preconditioning into the HNAG and SHANG frameworks, respectively. Finally, the strong empirical performance on nonconvex problems suggests that extending the analysis beyond convexity is an interesting direction for future work.

		\FloatBarrier
		\bibliographystyle{elsarticle-num}
		\bibliography{refs}

@misc{ChenLuo2019optimization,
	title={First order optimization methods based on Hessian-driven Nesterov accelerated gradient flow}, 
	author={Long Chen and Hao Luo},
	year={2019},
	eprint={1912.09276},
	archivePrefix={arXiv},
	primaryClass={math.OC},
}

@misc{GoujauTaylorDieule2025Provable,
	title={Provable non-accelerations of the heavy-ball method}, 
	author={Baptiste Goujaud and Adrien Taylor and Aymeric Dieuleveut},
	year={2025},
	eprint={2307.11291},
	archivePrefix={arXiv},
	primaryClass={math.OC},
}

@article{AssranRabbat2020,
	title={On the Convergence of Nesterov's Accelerated Gradient Method in Stochastic Settings},
	author={Mahmoud Assran and Michael G. Rabbat},
	journal={ArXiv},
	year={2020},
	volume={abs/2002.12414}
}

@article{AujolDossal2015,
	author = {Aujol, J.-F. and Dossal, Ch.},
	title = {Stability of Over-Relaxations for the Forward-Backward Algorithm, Application to FISTA},
	journal = {SIAM Journal on Optimization},
	volume = {25},
	number = {4},
	pages = {2408-2433},
	year = {2015},
	doi = {10.1137/140994964}
}

@article{Bollapragada2022,
	title={On the fast convergence of minibatch heavy ball momentum},
	volume={45},
	ISSN={1464-3642},
	DOI={10.1093/imanum/drae033},
	number={3},
	journal={IMA Journal of Numerical Analysis},
	author={Bollapragada, Raghu and Chen, Tyler and Ward, Rachel},
	year={2024},
	pages={1397–1424} }

@misc{ChenLuo2021,
	title={A Unified Convergence Analysis of First Order Convex Optimization Methods via Strong Lyapunov Functions}, 
	author={Long Chen and Hao Luo},
	year={2021},
	eprint={2108.00132},
	archivePrefix={arXiv},
	primaryClass={math.OC}
}

@misc{ChenXu2025,
	archiveprefix = {arXiv},
	author = {Long Chen and Zeyi Xu},
	eprint = {2510.16680},
	primaryclass = {math.OC},
	title = {HNAG++: A Super-Fast Accelerated Gradient Method for Strongly Convex Optimization},
	year = {2025}}

@article{DevolderGlineurNesterov2014,
	author = {Olivier Devolder and Fran{\c{c}}ois Glineur and Yurii Nesterov},
	journal = {Mathematical Programming},
	pages = {37--75},
	title = {First-Order Methods of Smooth Convex Optimization with Inexact Oracle},
	volume = {146},
	year = {2014}}

@inproceedings{even2021,
	author = {Even, Mathieu and Berthier, Rapha\"{e}l and Bach, Francis and Flammarion, Nicolas and Gaillard, Pierre and Hendrikx, Hadrien and Massouli\'{e}, Laurent and Taylor, Adrien},
	title = {A continuized view on nesterov acceleration for stochastic gradient descent and randomized gossip},
	year = {2021},
	isbn = {9781713845393},
	booktitle = {Proceedings of the 35th International Conference on Neural Information Processing Systems},
	numpages = {13}
}

@InProceedings{GaneshEtAl2023,
	title = 	 {Does Momentum Help in Stochastic Optimization? {A} Sample Complexity Analysis.},
	author =       {Ganesh, Swetha and Deb, Rohan and Thoppe, Gugan and Budhiraja, Amarjit},
	booktitle = 	 {Proceedings of the Thirty-Ninth Conference on Uncertainty in Artificial Intelligence},
	pages = 	 {602--612},
	year = 	 {2023},
	volume = 	 {216}
}

@inproceedings{GuptaSiegelWojtowytsch2024,
	author = {Gupta, Kanan and Siegel, Jonathan W. and Wojtowytsch, Stephan},
	title = {Nesterov acceleration despite very noisy gradients},
	year = {2024},
	isbn = {9798331314385},
	booktitle = {Proceedings of the 38th International Conference on Neural Information Processing Systems},
	articleno = {653},
	numpages = {51}
}

@book{HastieTibshiraniFriedman2009,
	author = {Trevor Hastie and Robert Tibshirani and Jerome Friedman},
	publisher = {Springer},
	title = {The Elements of Statistical Learning: Data Mining, Inference, and Prediction},
	year = {2009}}

@INPROCEEDINGS{HeZhangRenSun2016,
	author={He, Kaiming and Zhang, Xiangyu and Ren, Shaoqing and Sun, Jian},
	booktitle={2016 IEEE Conference on Computer Vision and Pattern Recognition (CVPR)}, 
	title={Deep Residual Learning for Image Recognition}, 
	year={2016},
	pages={770-778},
	doi={10.1109/CVPR.2016.90}}

@misc{HermantEtAl2025,
	title={Gradient correlation is a key ingredient to accelerate SGD with momentum}, 
	author={Julien Hermant and Marien Renaud and Jean-François Aujol and Charles Dossal and Aude Rondepierre},
	year={2025},
	eprint={2410.07870},
	archivePrefix={arXiv}
}

@misc{jain2018,
	title={Accelerating Stochastic Gradient Descent For Least Squares Regression}, 
	author={Prateek Jain and Sham M. Kakade and Rahul Kidambi and Praneeth Netrapalli and Aaron Sidford},
	year={2018},
	eprint={1704.08227},
	archivePrefix={arXiv}
}

@misc{KidambiEtAl2018,
	title={On the insufficiency of existing momentum schemes for Stochastic Optimization}, 
	author={Rahul Kidambi and Praneeth Netrapalli and Prateek Jain and Sham M. Kakade},
	year={2018},
	eprint={1803.05591},
	archivePrefix={arXiv}

}

@inproceedings{KingmaBa2015,
	author       = {Diederik P. Kingma and
	Jimmy Ba},
	title        = {Adam: {A} Method for Stochastic Optimization},
	booktitle    = {3rd International Conference on Learning Representations, {ICLR} 2015},
	year         = {2015},
}

@book{Klenke2013,
	author = {Achim Klenke},
	publisher = {Springer},
	title = {Probability Theory: A Comprehensive Course},
	year = {2013}}

@inproceedings{Krizhevsky2009,
	title={Learning Multiple Layers of Features from Tiny Images},
	author={Alex Krizhevsky},
	year={2009},
	url={https://api.semanticscholar.org/CorpusID:18268744}
}

@InProceedings{laborde2020,
	title = 	 {A Lyapunov analysis for accelerated gradient methods: from deterministic to stochastic case},
	author =       {Laborde, Maxime and Oberman, Adam},
	booktitle = 	 {Proceedings of the Twenty Third International Conference on Artificial Intelligence and Statistics},
	pages = 	 {602--612},
	year = 	 {2020},
	volume = 	 {108}
}

@misc{Hodgkinson2021,
	title={Multiplicative noise and heavy tails in stochastic optimization}, 
	author={Liam Hodgkinson and Michael W. Mahoney},
	year={2020},
	eprint={2006.06293},
	archivePrefix={arXiv}
}

@article{LeCunBottouBengioHaffner1998,
	author = {Yann LeCun and L{\'e}on Bottou and Yoshua Bengio and Patrick Haffner},
	journal = {Proceedings of the IEEE},
	number = {11},
	pages = {2278--2324},
	title = {Gradient-Based Learning Applied to Document Recognition},
	volume = {86},
	year = {1998}}

@article{LessardRechtPackard2016,
	author = {Lessard, Laurent and Recht, Benjamin and Packard, Andrew},
	title = {Analysis and Design of Optimization Algorithms via Integral Quadratic Constraints},
	journal = {SIAM Journal on Optimization},
	volume = {26},
	number = {1},
	pages = {57-95},
	year = {2016},
	doi = {10.1137/15M1009597}
}

@article{LiuEtAl2018,
	title={Toward Deeper Understanding of Nonconvex Stochastic Optimization with Momentum using Diffusion Approximations},
	author={Tianyi Liu and Zhehui Chen and Enlu Zhou and Tuo Zhao},
	journal={ArXiv},
	year={2018},
	volume={abs/1802.05155}
}

@misc{LiuBelkin2020,
	title={Accelerating SGD with momentum for over-parameterized learning}, 
	author={Chaoyue Liu and Mikhail Belkin},
	year={2019},
	eprint={1810.13395},
	archivePrefix={arXiv}
}

@article{Nesterov1983,
	title = {A method for solving the convex programming problem with convergence rate {$O(1/k^2)$}},
	author={Yurii Nesterov},
	journal={Proceedings of the USSR Academy of Sciences},
	year={1983},
	volume={269},
	pages={543-547},
}

@article{Nesterov2013,
	author = {Nesterov, Yu.},
	title = {Efficiency of Coordinate Descent Methods on Huge-Scale Optimization Problems},
	journal = {SIAM Journal on Optimization},
	volume = {22},
	number = {2},
	pages = {341-362},
	year = {2012},
	doi = {10.1137/100802001},
}

@article{Polyak1964,
	title = {Some methods of speeding up the convergence of iteration methods},
	journal = {USSR Computational Mathematics and Mathematical Physics},
	volume = {4},
	number = {5},
	pages = {1-17},
	year = {1964},
	issn = {0041-5553},
	doi = {https://doi.org/10.1016/0041-5553(64)90137-5},
	author = {B.T. Polyak},
}

@misc{RonnebergerFischerBrox2015,
	title={U-Net: Convolutional Networks for Biomedical Image Segmentation}, 
	author={Olaf Ronneberger and Philipp Fischer and Thomas Brox},
	year={2015},
	eprint={1505.04597},
	archivePrefix={arXiv},
}

@misc{thulasidasan2020mixuptrainingimprovedcalibration,
	archiveprefix = {arXiv},
	author = {Sunil Thulasidasan and Gopinath Chennupati and Jeff Bilmes and Tanmoy Bhattacharya and Sarah Michalak},
	eprint = {1905.11001},
	title = {On Mixup Training: Improved Calibration and Predictive Uncertainty for Deep Neural Networks},
	year = {2020}}

@inproceedings{VaswaniBachSchmidt2019,
	author = {Sharan Vaswani and Francis Bach and Mark Schmidt},
	booktitle = {International Conference on Artificial Intelligence and Statistics (AISTATS)},
	pages = {1195--1204},
	title = {Fast and Faster Convergence of {SGD} for Over-Parameterized Models and an Accelerated Perceptron},
	year = {2019}}

@misc{WuDuWard2019,
	title={Global Convergence of Adaptive Gradient Methods for An Over-parameterized Neural Network}, 
	author={Xiaoxia Wu and Simon S. Du and Rachel Ward},
	year={2019},
	eprint={1902.07111},
	archivePrefix={arXiv},
}

@inproceedings{WuWangSu2022a,
	title={The alignment property of SGD noise and how it helps select flat minima: A stability analysis},
	author={Lei Wu and Mingze Wang and Weijie Su},
	booktitle={Neural Information Processing Systems},
	year={2022},
}

@misc{yu2026adamhnagconvergentreformulationadam,
	title={Deterministic Adam-Inspired Methods with Accelerated Convergence Rate}, 
	author={Yaxin Yu and Long Chen and Zeyi Xu},
	year={2026},
	eprint={2604.08742},
	archivePrefix={arXiv},
	primaryClass={math.OC},
	url={https://arxiv.org/abs/2604.08742}, 
}

@misc{yu2026adamshangconvergentadamtypemethod,
	title={Adam-SHANG: A Convergent Adam-Type Method for Stochastic Smooth Convex Optimization}, 
	author={Yaxin Yu and Long Chen and Minfu Feng},
	year={2026},
	eprint={2605.12878},
	archivePrefix={arXiv},
	primaryClass={math.OC},
	url={https://arxiv.org/abs/2605.12878}, 
}
	\end{document}